\documentclass[11pt]{article}
\usepackage{pdfsync}
\usepackage{subfigure}
\usepackage{amssymb}
\usepackage{color}
\usepackage{amsmath,amsthm}
\usepackage{graphicx}
\usepackage{empheq}
\usepackage{indentfirst}
\setlength{\oddsidemargin}{0mm} \setlength{\evensidemargin}{0mm}
\setlength{\topmargin}{-15mm} \setlength{\textheight}{240mm}
\setlength{\textwidth}{155mm}
%
 \newtheorem{thm}{Theorem}[section]
 
 \newtheorem{lem}{Lemma}[section]
 
 \theoremstyle{definition}
 \newtheorem{defn}{Definition}[section]
 
 \numberwithin{equation}{section}
\def\f{\frac}
\def\pa{\partial}

\def\ov #1{\overline{#1}}
\def\i1n{i=1,\cdots,n}
\def\j1n{j=1,\cdots,n}
\def\ij1n{i,j=1,\cdots,n}

%
\newcommand{\be}{\begin{equation}}
\newcommand{\ee}{\end{equation}}
\newcommand{\beq}{\begin{equation*}}
\newcommand{\eeq}{\end{equation*}}

\title{Cauchy problem for multiscale conservation laws: \\Application to structured cell populations
\thanks{This work was supported by the large scale INRIA project REGATE (REgulation of
the GonAdoTropE axis).}}


\author{Peipei Shang\thanks{INRIA Paris-Rocquencourt Centre.
Universit\'{e} Pierre et Marie Curie-Paris 6.
UMR 7598 Laboratoire Jacques-Louis
Lions, 75005 Paris, France. E-mail: {\tt Peipei.Shang@inria.fr}.}}
\date{October 11, 2010}
\begin{document}

\maketitle

\begin{abstract}
In this paper, we study a vector conservation law that models the growth 
and selection of ovarian follicles. During each ovarian cycle, only
a definite number of follicles ovulate, while the others undergo a degeneration
process called atresia. This work is motivated by a multiscale
mathematical model starting on the cellular scale, 
where ovulation or atresia result from a hormonally controlled
selection process. A two-dimensional conservation law describes the age and maturity 
structuration of the follicular cell populations. The densities intersect through a coupled hyperbolic system between
different follicles and cell phases, which results in a vector conservation law and coupling boundary conditions.
The maturity velocity functions
possess both a local and nonlocal character. We prove the existence and uniqueness 
of the weak solution to the Cauchy problem with bounded initial and boundary data.
\end{abstract}

{\bf Keywords:}
conservation laws, nonlocal velocity, multiscale, biomathematics

{\bf 2000 MR Subject Classification:}\quad
         35L65, 
         92D25, 
         92B05. 

\section{Introduction}
In this paper, we study the following vector conservation law
\be \label{eq}
{\vec{\phi}_{ft}(t,x,y)}+(A_f\vec{\phi}_f(t,x,y))_x +(B_f\vec{\phi}_f(t,x,y))_y= C\vec{\phi}_f(t,x,y),
\ee
\begin{equation*}
t\in[0,T],\quad(x,y)\in {[0,1]^2},
\end{equation*}
where $\vec\phi_f=(\ov\phi^f_1,\cdots,\ov\phi^f_N,\ \widehat\phi^f_1,\cdots,\widehat\phi^f_N,\ \widetilde\phi^f_1,\cdots,\widetilde\phi^f_N)^T$, 
$f=1,\cdots,n$, and
\begin{align*}
&A_f:=\mbox{diag}\ \{\overbrace{\ov g_f,\cdots,\ov g_f}^{N},\ \overbrace{\widehat g_f,\cdots,\widehat g_f}^{N},\
 \overbrace{\widetilde g_f,\cdots,\widetilde g_f}^{N}\},\nonumber\\
&B_f:=\mbox{diag}\ \{\overbrace{\ov h_f,\cdots,\ov h_f}^{N},\ \overbrace{0,\cdots,0}^{N},\
 \overbrace{\widetilde h_f,\cdots,\widetilde h_f}^{N}\},\nonumber\\
&C:=-\mbox{diag}\ \{\overbrace{\ov\lambda,\cdots,\ov\lambda}^{N},\ \overbrace{0,\cdots,0}^{N},\
 \overbrace{\widetilde\lambda,\cdots,\widetilde\lambda}^{N}\}.
\end{align*}

Here $\widehat g_f$ and $\widetilde g_f$ are positive constants, 
$\ov g_f=\ov g_f(u_f)\in C^1([0,\infty))$, $\ov h_f=\ov h_f(y,u_f)$, $\widetilde h_f=\widetilde h_f(y,u_f)$, $\ov\lambda=\ov\lambda(y,U)$ 
and $\widetilde\lambda=\widetilde\lambda(y,U)$ all belong to $C^1([0,1]\times[0,\infty))$, 
with $u_f=u_f(M_f(t),M(t),t)$ the local control and $U=U(M(t),t)$ the global control, where
$u_f>0$ and $U>0$ are continuous differentiable, i.e.,
$u_f\in C^1([0,\infty)^2\times[0,T])$ and $U\in C^1([0,\infty)\times[0,T])$.
For instance, the specific case that motivated our work is presented in Appendix 6.1.
The function $M_f$ is the maturity on the follicular scale given by 
\begin{align*} M_f(t):=& \sum_{k=1}^N\int_0^1\int_0^1 a_1\gamma_s^2y
\ov\phi^f_k(t,x,y)\,dx\,dy+\sum_{k=1}^N\int_0^1\int_0^1(a_2-a_1)\gamma^2_s y\widehat\phi^f_k(t,x,y)\,dx\,dy\\
&\!\!+\!\!\sum_{k=1}^N\int_0^1\int_0^1a_2\gamma_0(\gamma_0 y +\gamma_s)\widetilde\phi^f_k(t,x,y)\,dx\,dy 
\end{align*}
and
\begin{eqnarray*}
M(t):=\sum_{f=1}^{n} M_f(t)
\end{eqnarray*}
is the global maturity on the ovarian scale. The parameters $a_1, a_2, \gamma_s$ and $\gamma_0$ are positive constants with $a_2>a_1$.

This work is motivated by problems of cell dynamics arising in the control of the development of 
ovarian follicles. Ovarian follicles are spheroidal structures sheltering the maturing oocyte.
During each ovarian cycle, numerous follicles are in competition
for their survival. However, only very few follicles reach an ovulatory size, most of them
undergo a degeneration process, known as atresia (see for instance \cite{MH}).
A mathematical model, proposed by F. Cl\'{e}ment \cite{FC05} using both 
multi-scale modeling and control theory concepts, describes the follicle selection process.
For each follicle, the cell population dynamics is ruled by a conservation law, which describes the changes in cell age and maturity.
Two acting controls, $u_f$ and $U$ (see \cite{F08} and also Appendix 6.1), are distinguished.
The global control $U$ results from the ovarian feedback onto the 
pituitary gland and impacts the secretion of the follicle stimulating hormone (FSH). The feedback is responsible for reducing 
FSH release, leading to the degeneration of all but those follicles 
selected for ovulation.
The local control $u_f$, specific to each follicle, accounts for the modulation in FSH 
bioavailability related to follicular vascularization. In addition, the status of cells are characterized by three
phases. Phase 1 and 2 correspond to the proliferation phases, and Phase 3 corresponds to the
differentiation phase (see Fig 1). In Appendix 6.2, we have reformulated the original model to a new system (\ref{eq}), where the unknowns are
all defined on the same domain $[0,T]\times [0,1]^2$, so that it can be treated as a general
model for multiscale structured cell populations 
(see Appendix 6.2 for the relation between the original notations and the new notations).

The initial conditions are given by
\begin{align}\label{ic}
\vec{\phi}_{f0}:&=\!\!\vec{\phi}_f(0,x,y)\\ 
&=\!\!(\ov\phi^f_{10}(x,y),\!\cdots\!,\ov\phi^f_{N0}(x,y),\ \widehat\phi^f_{10}(x,y),\!\cdots\!,\widehat\phi^f_{N0}(x,y),\
\widetilde\phi^f_{10}(x,y),\!\cdots\!,\widetilde\phi^f_{N0}(x,y))^T\nonumber.
\end{align}

We use the simplified notations $u_f(t):=u_f(M_f(t),M(t),t)$, $U(t):=U(M(t),t)$, $\ov u_f(t):=u_f(\ov M_f(t),\ov M(t),t)$
and $\ov U(t):=U(\ov M(t),t)$ in the whole paper.

The boundary conditions at $x=0$ are given by
\be \label{bcx} 
\left\{
\begin{array}{lc}
\overline\phi^f_1(t,0,y)=0,\ (t,y)\in [0,T]\times[0,1],\\
\overline\phi^f_k(t,0,y)=\displaystyle\f{2\tau_{gf}}{a_1\overline g_f(u_f(t))}\ \widehat\phi^f_{k-1}(t,1,y),(t,y)\in [0,T]\times [0,1],
k=2,\cdots,N. \\
\widehat\phi^f_k(t,0,y)=\displaystyle\f{a_1\ov g_f(u_f(t))}{\tau_{gf}}\ \overline \phi^f_k(t,1,y),(t,y)\in[0,T]\times[0,1],k=1,\cdots,N.\\
\widetilde\phi^f_1(t,0,y)=0, \ (t,y)\in [0,T]\times[0,1],\\
\widetilde\phi^f_k(t,0,y)=\widetilde\phi^f_{k-1}(t,1,y),
  \ (t,y)\in [0,T]\times[0,1],\ k=2,\cdots, N.
\end{array}
  \right.
 \ee

The boundary conditions at $y=0$ are given by
\be \label{bcy}
\left\{
\begin{array}{l}
\ov\phi^f_k(t,x,0)=\widehat\phi^f_k(t,x,0)=0,\quad (t,x)\in [0,T]\times [0,1],\quad k=1,\cdots,N.\\
\widetilde\phi^f_k(t,x,0)=
  \left\{
  \begin{array}{l}
   \ov\phi^f_k(t,\displaystyle\f{a_2}{a_1}x,1),
   \quad   (t,x)\in [0,T]\times [0,\displaystyle\f{a_1}{a_2}],\\
   0,\quad  (t,x)\in [0,T]\times [\displaystyle\f{a_1}{a_2},1],
  \end{array} 
  \right.
  k=1,\cdots,N.
  \end{array}
  \right.
 \ee

The boundary conditions at $y=1$ are given by
\be \label{bcy1}
\widetilde\phi^f_k(t,x,1)=0,\quad (t,x)\in [0,T]\times [0,1],\quad k=1,\cdots, N.
\ee

\vspace{10mm}
\setlength{\unitlength}{0.085in}
\begin{figure}[htbp!]
\begin{picture}(-10,10)
\put(7,1){\vector(1,0){50}} \put(7,1){\vector(0,1){12}}
\put(7,10.9){\line(1,0){48}} \put(7,7){\line(1,0){48}}
\put(6,1){\line(0,1){5.9}} \put(5,7){\line(0,1){3.9}}
\put(12,1){\line(0,1){5.9}} \put(17,1){\line(0,1){9.8}}
\put(22,1){\line(0,1){5.9}} \put(27,1){\line(0,1){9.8}}
\put(31,1){\line(0,1){9.8}}
\put(28,3){\makebox(2,1)[l]{$\cdots$}}
\put(28,8){\makebox(2,1)[l]{$\cdots$}}
\put(36,1){\line(0,1){5.9}}
\put(41,1){\line(0,1){9.8}} \put(45,1){\line(0,1){9.8}}
\put(50,1){\line(0,1){5.9}} \put(55,1){\line(0,1){9.8}}
\put(6,-1){\makebox(2,1)[l]{0}}
\put(5,6){\makebox(2,1)[l]{1}}
\put(4,6){\makebox(2,1)[l]{0}}
\put(4,10){\makebox(2,1)[l]{1}}
\put(6.5,14){\makebox(2,1)[l]{$y$}}
\put(58,0.5){\makebox(2,1)[l]{$x$}}

\put(7,-0.01){\line(1,0){5}}
\put(12,-0.7){\makebox(2,1)[l]{1}}
\put(11,-2){\makebox(2,1)[l]{0}}
\put(12,-1){\line(1,0){5}}
\put(17,-1.7){\makebox(2,1)[l]{$1$}}
\put(16,-2.5){\makebox(2,1)[l]{0}}
\put(17,-2){\line(1,0){5}}
\put(22,-2.5){\makebox(2,1)[l]{$1$}}
\put(21,-3.5){\makebox(2,1)[l]{0}}
\put(22,-3){\line(1,0){5}}
\put(27,-3.5){\makebox(2,1)[l]{$1$}}

\put(30,-0.5){\makebox(2,1)[l]{0}}
\put(31,-0.01){\line(1,0){5}}
\put(36,-0.5){\makebox(2,1)[l]{$1$}}
\put(35,-1.5){\makebox(2,1)[l]{0}}
\put(36,-1){\line(1,0){5}}
\put(41,-1.5){\makebox(2,1)[l]{$1$}}

\put(44,-0.5){\makebox(2,1)[l]{0}}
\put(45,-0.01){\line(1,0){5}}
\put(50,-0.5){\makebox(2,1)[l]{$1$}}
\put(49,-1.7){\makebox(2,1)[l]{0}}
\put(50,-1){\line(1,0){5}}
\put(55,-1.7){\makebox(2,1)[l]{$1$}}

\put(28,-1){\makebox(2,1)[l]{$\cdots$}}
\put(42,-1){\makebox(2,1)[l]{$\cdots$}}
\put(8,3){\makebox(2,1)[l]{ $\ov\phi^f_1$}}
\put(14,3){\makebox(2,1)[l]{$\widehat\phi^f_1$}}
\put(11,8){\makebox(2,1)[l]{$\widetilde\phi^f_1$}}
\put(18,3){\makebox(2,1)[l]{ $\ov\phi^f_2$}}
\put(24,3){\makebox(2,1)[l]{$\widehat\phi^f_2$}}
\put(21,8){\makebox(2,1)[l]{$\widetilde\phi^f_2$}}
\put(32,3){\makebox(2,1)[l]{ $\ov\phi^f_k$}}
\put(38,3){\makebox(2,1)[l]{$\widehat\phi^f_k$}}
\put(35,8){\makebox(2,1)[l]{$\widetilde\phi^f_k$}}
\put(46,3){\makebox(2,1)[l]{ $\ov\phi^f_N$}}
\put(52,3){\makebox(2,1)[l]{$\widehat\phi^f_N$}}
\put(49,8){\makebox(2,1)[l]{$\widetilde\phi^f_N$}}
\put(42,3){\makebox(2,1)[l]{$\cdots$}}
\put(42,8){\makebox(2,1)[l]{$\cdots$}}
\put(16,7.5){\vector(1,1){1.5}} \put(16,11){\vector(1,-1){1.5}} 
\put(26,7.5){\vector(1,1){1.5}} \put(26,11){\vector(1,-1){1.5}}  
\put(30,7.5){\vector(1,1){1.5}} \put(30,11){\vector(1,-1){1.5}} 
\put(40,7.5){\vector(1,1){1.5}} \put(40,11){\vector(1,-1){1.5}}
\put(44,7.5){\vector(1,1){1.5}} \put(44,11){\vector(1,-1){1.5}}
\put(9,6){\vector(1,1){1.5}} 
\put(11,4){\vector(1,1){1.5}}
\put(16,4){\vector(1,0){1.5}}
\put(19,6){\vector(1,1){1.5}} 
\put(21,4){\vector(1,1){1.5}}
\put(26,4){\vector(1,0){1.5}}
\put(33,6){\vector(1,1){1.5}} 
\put(35,4){\vector(1,1){1.5}}
\put(40,4){\vector(1,0){1.5}}
\put(47,6){\vector(1,1){1.5}} 
\put(49,4){\vector(1,1){1.5}}
\end{picture}
\vspace{5mm}
\caption{The illustration of $N$ cell cycles for follicle $f$; $x$ denotes the age velocity and $y$ denotes the 
maturity velocity. The top of the domain corresponds to the differentiation phase and the bottom to the the proliferation phase}.
\label{Fig 1}
\end{figure}
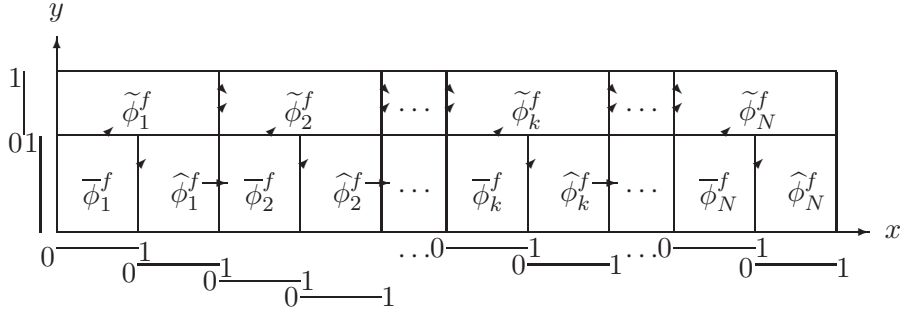
The well-posedness problems of the hyperbolic conservation laws have been widely studied
for a long time. We refer to the works
\cite{BBressan, BressanBook, Kruzkov, LiuYang, Poretta} (and the references therein) in the content of weak solutions to systems 
in conservation laws, and \cite{LiBook94, LiYuBook} in the content of classical solutions to general
quasi-linear hyperbolic systems.
In this paper, we perform the mathematical analysis of this system:
we prove the existence, uniqueness and regularity of the weak solution to the
Cauchy problem defined by (\ref{eq})-(\ref{bcy1}) with initial
and boundary data in $L^{\infty}$.
The main difficulty tackled with in this paper comes from the nonlocal velocity, 
the coupling between boundary conditions and the coupling 
between different follicles in the model. 
Additionally, we have to deal with loss terms, and the problem is a 2 - space dimension one.
 
In the related works that have considered nonlocal velocity problems \cite{Wang, SW}, 
the problems are only
1 - space dimension and do not have source terms. The velocities there are always positive,
while in our case, the velocities $\widetilde h_f(f=1,\cdots,n)$ we considered in this paper change sign in Phase 3. 
In another related work \cite{Michel}, which was also motivated by \cite{FC05},
the author has performed a mathematical analysis on this kind of model.
He has reduced the model to a 1-space dimension mass-maturity dynamical system of coupled
Ordinary Differential Equations,
basing on the asymptotic properties of the original law.
Once reduced, the model is amenable to analysis by bifurcation theory,
that allows one to predict the issue of the selection for one specific follicle 
amongst the whole population.
And he gave some assumptions on velocities according to biological observations.
In another work \cite{Michel1}, the author also studied the well-poseness of the model, he 
proved the existence of a measure valued solution without proving the uniqueness of the solution,
and he gave the behavior
of the solution to the main equation along its characteristics.
An associated reachability problem has been tackled in \cite{FC07}, which described the set of microscopic initial
conditions leading to the macroscopic phenomenon of either ovulation or atresia
in the framework of backwards reachable sets theory. The authors also performed some mathematical analysis on
well-poseness of this model in a simplified open loop like case, there the authors assumed that the local 
control $u_f$ and the global control $U$ are given functions of time $t$. While in this paper, we consider the close/open loop case.

This paper is organized as follows. In Section 2 
we first give the main results. 
In Section 3 after giving some basic notations,
we prove that the vector maturity $\vec M:=(M_1,\cdots,M_{n})$ exists as a fixed point of a map from
continuous function space, and then we construct a local weak solution to the Cauchy problem defined by (\ref{eq})-(\ref{bcy1}).
In Section 4 we prove the uniqueness of the weak solution.
Finally in Section 5 we prove the existence of a global solution.
In complement, we introduce in Appendix 6.1 the original mathematical model proposed 
by F. Cl\'{e}ment \cite{FC05}. In Appendix 6.2 we reformulate this model to the
new system (\ref{eq}) and we give the equivalence between the original notations and the new notations.
In Appendix 6.3 we give a basic lemma that is used to prove the existence and uniqueness of the weak solution.

\section{Main results} 
 
We recall from \cite[Section 2.1]{CoronBook}, the usual definition of a weak solution to the Cauchy problem defined by
(\ref{eq})-(\ref{bcy1}).
\begin{defn}
\label{weaksol} Let $T>0$, $\vec{\phi}_{f0}\in L^{\infty}((0,1)^2)$ 
be given. A weak solution of Cauchy problem
(\ref{eq})-(\ref{bcy1}) is a vector function $\vec{\phi}_f\in
C^0([0,T];L^1((0,1)^2))$ such that
for every $\tau\in[0,T]$ and every vector function $\vec{\varphi}:=\!\!(\ov\varphi_1,\!\cdots\!,\ov\varphi_N,\widehat\varphi_1,\!\cdots\!,\widehat\varphi_N,
\widetilde\varphi_1,\!\cdots\!,\widetilde\varphi_N)^T\!\!\!\in
C^1([0,\tau]\times[0,1]^2)$ with
 \begin{align}\label{varphi}
 &\vec{\varphi}(\tau,x,y)= 0,\quad \forall (x,y)\in[0,1]^2,\\
 &\vec{\varphi}(t,1,y)= 0,\quad \forall (t,y)\in[0,\tau]\times[0,1],\\
 &\ov\varphi_1(t,0,y)=\widetilde\varphi_1(t,0,y)=0,\quad \forall (t,y)\in[0,\tau]\times[0,1],\\
 &\label{varphi0}\ov\varphi_k(t,x,0)=\widehat\varphi_k(t,x,0)=\ov\varphi_k(t,x,1)=\widehat\varphi_k(t,x,1)=0,\quad \forall (t,x)\in[0,\tau]\times[0,1],
 \end{align}
one has
\begin{align}\label{intequality}
 &\int_0^{\tau}\!\!\int_0^1\!\!\int_0^1 \vec\phi_f(t,x,y)\cdot(\vec{\varphi}_t(t,x,y)
 \!+\!A_f\vec{\varphi}_x(t,x,y)\!+\!B_f\vec{\varphi}_y(t,x,y)\!+\!C\vec{\varphi}(t,x,y)) dx dy dt
  \\
  &+\!\!\int_0^1\!\!\int_0^1 \vec\phi_{f0}(x,y)\cdot\vec{\varphi}(0,x,y)dxdy\!+\!\!\sum_{k=1}^N\int_0^{\tau}\!\!\int_0^{\f{a_1}{a_2}}\widetilde h_f(0,u_f(t))\ov\phi^f_k(t,\f{a_2}{a_1}x,1)\widetilde\varphi_k(t,x,0)dxdt\nonumber \\
  &+\!\!\sum_{k=2}^N\int_0^{\tau}\!\!\int_0^1 \f{2\tau_{gf}}{a_1}\ \widehat\phi^f_{k-1}(t,1,y)\ov\varphi_k(t,0,y)dydt\!+\!\!
  \sum_{k=2}^N\int_0^{\tau}\!\!\int_0^1 \widetilde g_f\widetilde\phi^f_{k-1}(t,1,y)\widetilde\varphi_k(t,0,y)dydt\nonumber \\
   &+\!\!\sum_{k=1}^N \int_0^{\tau}\!\!\int_0^1 \f{a_1\widehat g_f\ov g_f(u_f(t))}{\tau_{gf}}\ \ov\phi^f_k(t,1,y)\widehat\varphi_k(t,0,y)dydt=0.
   \nonumber 
 \end{align}
\end{defn}

With the definition, we have the main result
\begin{thm}\label{thm-sol}
Let $T>0$, $\vec\phi_{f0}\in L^{\infty}((0,1)^2)$ be given. Let us further assume that
\begin{align*}
&\ov g_f(u_f)>0,\quad \forall\  u_f\in [0,\infty), \\
&\ov h_f(y,u_f)>0,\quad \forall\  (y,u_f)\in [0,1]\times[0,\infty), \\
&\widetilde h_f(0,u_f)>0,\ \widetilde h_f(1,u_f)<0,\quad \forall\  u_f\in [0,\infty).
\end{align*} 

Then the Cauchy problem defined by
(\ref{eq})-(\ref{bcy1}) admits a unique weak solution $\vec\phi_f=(\ov\phi^f_1,\cdots,\ov\phi^f_N,\ \widehat\phi^f_1,\cdots,\widehat\phi^f_N,\ 
\widetilde\phi^f_1,\cdots,\widetilde\phi^f_N)^T$.
Moreover, the weak solution $\vec\phi_f$ even belongs to
$C^0([0,T];L^p((0,1)^2))$ for all $p\in [1,\infty)$.
\end{thm}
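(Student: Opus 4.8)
The plan is to decouple the problem into a fixed-point argument for the nonlocal quantities $\vec M = (M_1,\dots,M_n)$ and a linear transport problem with given (now time-dependent but local-in-the-unknown) coefficients. First I would fix a candidate $\vec M \in C^0([0,T_0];\R^n)$ for some small $T_0>0$; through the prescribed dependences $u_f = u_f(M_f(t),M(t),t)$ and $U = U(M(t),t)$ this turns all the velocities $\ov g_f(u_f(t))$, $\ov h_f(y,u_f(t))$, $\wt h_f(y,u_f(t))$ and the loss coefficients $\ov\lambda(y,U(t))$, $\wt\lambda(y,U(t))$ into known $C^1$ functions of $(t,x,y)$. For this frozen data the system \eqref{eq} is a \emph{linear} hyperbolic system: the $x$-transport is at constant-in-$(x,y)$ speed in each component, the $y$-transport speeds are either the positive constant structure (Phase 1) or vanish (Phase 2) or the sign-changing $\wt h_f$ (Phase 3), and the coupling is entirely through the boundary conditions \eqref{bcx}–\eqref{bcy1}, which are "triangular" in the sense that the inflow on one face of a cell-phase block is an outflow trace of another block already solved for. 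I would solve this linear problem by the method of characteristics: integrate along characteristic curves in the $(x,y)$ variables, picking up the exponential decay from the loss term $C$, and reading off the boundary data from previously-determined traces; the sign conditions $\ov h_f>0$, $\wt h_f(0,\cdot)>0$, $\wt h_f(1,\cdot)<0$ guarantee that $y=0$ is an inflow boundary for Phases 1 and 3 and that $y=0$ and $y=1$ are both inflow for Phase 3 while $y=1$ condition \eqref{bcy1} is consistent, so no boundary data are over- or under-prescribed. This yields, for each frozen $\vec M$, a unique weak solution $\vec\phi_f$ in $C^0([0,T_0];L^1)$, and the explicit characteristic representation plus the $L^\infty$ bounds on the (bounded) initial/boundary data give $\vec\phi_f \in C^0([0,T_0];L^p)$ for every $p<\infty$, with an a priori bound depending only on $\|\vec\phi_{f0}\|_{L^\infty}$, $T_0$ and the $C^1$ norms of the data.

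Next I would close the fixed point. Define the map $\Lambda : \vec M \mapsto \widehat{\vec M}$, where $\widehat M_f(t)$ is computed by plugging the solution $\vec\phi_f$ just constructed into the integral formula defining $M_f$. Since the weights $a_1\gamma_s^2 y$, $(a_2-a_1)\gamma_s^2 y$, $a_2\gamma_0(\gamma_0 y+\gamma_s)$ are bounded on $[0,1]^2$ and $\vec\phi_f \in C^0([0,T_0];L^1)$, $\widehat M_f$ is continuous in $t$; a uniform $L^1$ bound (from the $L^\infty$ bound above) shows $\Lambda$ maps a closed ball of $C^0([0,T_0];\R^n)$ into itself. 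For contraction I would take two candidates $\vec M^{(1)}, \vec M^{(2)}$, subtract the two characteristic representations, and estimate the difference: the velocities and loss terms depend on $\vec M^{(i)}$ through $C^1$ functions $u_f, U$, so the difference of coefficients is Lipschitz in $\|\vec M^{(1)} - \vec M^{(2)}\|_{C^0}$; propagating this through the (Lipschitz, by finite speed and smooth data) characteristic flow and through the bounded boundary couplings produces $\|\Lambda\vec M^{(1)} - \Lambda\vec M^{(2)}\|_{C^0([0,T_0])} \le C\,T_0\,\|\vec M^{(1)} - \vec M^{(2)}\|_{C^0([0,T_0])}$, so for $T_0$ small enough $\Lambda$ is a contraction and Banach's theorem gives a unique $\vec M$, hence a local weak solution. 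The basic lemma announced in Appendix 6.3 is presumably the tool that makes the characteristic estimates and the equivalence between the weak formulation \eqref{intequality} and the characteristic representation rigorous.

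For uniqueness of the weak solution (not merely of the one produced by the scheme), I would take two weak solutions with the same data on $[0,T_0]$, note that each has its own $\vec M$, and run the same difference estimate: the weak formulation forces each solution to coincide with the characteristic representation associated to its own $\vec M$ (this is where I test \eqref{intequality} against functions concentrating along characteristics, using the lemma), so the difference of the two solutions is controlled by the difference of the two $\vec M$'s, which in turn is controlled by the difference of the solutions via the $M_f$ integral — a Gronwall/contraction loop that forces both differences to vanish for small time, and then by iteration on all of $[0,T]$. Finally, for the global solution I would show that the local existence time $T_0$ can be chosen uniformly as long as $\|\vec\phi_f(t,\cdot)\|_{L^\infty}$ stays bounded, and then prove an a priori global $L^\infty$ bound: along characteristics the only growth is the exponential factor from $C$ (which has a sign-definite structure, $\ov\lambda,\wt\lambda \ge 0$ giving decay, or at worst a bound $e^{CT}$) together with the boundary multipliers $2\tau_{gf}/(a_1\ov g_f(u_f))$ and $a_1\ov g_f(u_f)/\tau_{gf}$, which are bounded on $[0,T]$ because $\ov g_f(u_f(t))$ is continuous and bounded away from $0$ by the hypothesis $\ov g_f>0$ and the boundedness of $u_f$. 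Hence the solution cannot blow up in finite time and the local solution extends to $[0,T]$, and the $C^0([0,T];L^p)$ regularity is inherited from the local statement.

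I expect the main obstacle to be the \textbf{careful bookkeeping of the characteristics in Phase 3 combined with the boundary couplings}: the velocity $\wt h_f(y,u_f(t))$ changes sign inside $[0,1]$, so characteristic curves in the $(x,y)$-plane for $\wt\phi_k^f$ bend and some of them never reach $x=0$ or $x=1$ but instead emanate from or return to the interior turning curve $\{\wt h_f = 0\}$ — one must check that no characteristic fails to trace back to either the initial time or a genuine inflow boundary, and that the composite map sending the inflow trace data through the successive cell cycles $k=1,\dots,N$ (and, via \eqref{bcy}, from Phase 1 at $y=1$ into Phase 3 at $y=0$) is well-defined and Lipschitz, uniformly in $\vec M$. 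The sign assumptions in Theorem 2.1 are exactly what is needed to rule out the pathological cases, but turning them into clean estimates is the technical heart of the argument.
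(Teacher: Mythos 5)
Your proposal follows essentially the same route as the paper: freeze the nonlocal maturities $\vec M$, solve the resulting linear problem explicitly along characteristics (with the sign hypotheses sorting out which faces are inflow, including the sign-changing $\widetilde h_f$ in Phase 3), run a Banach fixed point on the map $\vec M\mapsto\widehat{\vec M}$ over a small time interval, prove uniqueness by testing the weak formulation against solutions of the backward adjoint problem so that any weak solution must coincide with the characteristic representation attached to its own maturity (which then satisfies the same fixed-point equation), and extend globally via uniform a priori $L^\infty$ bounds that make the local existence time independent of the starting time. The paper's contraction map $\vec G$ in \eqref{GGG} is exactly your composition ``solve, then integrate the maturity weights,'' written out explicitly, so the two arguments coincide in substance.
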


The sketch of the proof of Theorem \ref{thm-sol} consists in
first proving that the maturity $\vec M(t)=(M_1(t),\cdots, M_{n}(t))$ exists as a fixed point of the
map $\vec M\mapsto \vec G(\vec M)$ (see Section 3.1), and then in constructing a (unique) local solution (see Section 3.2 and Section 4),
before finally proving the existence of a global solution to the Cauchy problem defined by (\ref{eq})-(\ref{bcy1}) (see Section 5).
\section{Fixed point argument and construction of a local solution to the Cauchy problem}
In this section, we first derive the contraction mapping function $\vec G$. 
Given the existence of fixed point to this contraction mapping function, we can
then construct a local solution to the Cauchy problem defined by (\ref{eq})-(\ref{bcy1}).
\subsection{Fixed point argument}
First we introduce some notations.
Let us define:
\begin{align*}
&\|\ov\phi^f_{k0}\|:=\|\ov\phi^f_{k0}\|_{L^{\infty}((0,1)^2)}:=ess\,sup_{(x,y)\in[0,1]^2}|\ov\phi^f_{k0}(x,y)|,\\
&\|\widehat\phi^f_{k0}\|:=\|\widehat\phi^f_{k0}\|_{L^{\infty}((0,1)^2)}:=ess\,sup_{(x,y)\in[0,1]^2}|\widehat\phi^f_{k0}(x,y)|,\\
&\|\widetilde\phi^f_{k0}\|:=\|\widetilde\phi^f_{k0}\|_{L^{\infty}((0,1)^2)}:=ess\,sup_{(x,y)\in[0,1]^2}|\widetilde\phi^f_{k0}(x,y)|.
\end{align*}
\begin{align}\label{K}
&K:=\!\!2^N(\gamma_0+\gamma_s)^2 \sum_{f=1}^{n}\!\sum_{k=1}^N\Big(a_1\|\ov\phi^f_{k0}\|_{L^1((0,1)^2)}
\!\!+\!\!(a_2-a_1)\|\widehat\phi^f_{k0}\|_{L^1((0,1)^2)}
\!\!+\!\!a_2\|\widetilde\phi^f_{k0}\|_{L^1((0,1)^2)}\Big),\\
\label{K1}&K_1:=\max\Big\{\widehat g_f,\|\ov g_f(u_f(M_f,M,t))\|_{C^1(Q_1)},\|\ov h_f(y,u_f(M_f,M,t))\|_{C^1(Q_2)},\\
&\qquad\|\widetilde h_f(y,u_f(M_f,M,t))\|_{C^1(Q_2)},
\|\ov\lambda(y,U(M,t))\|_{C^1(Q_3)},\|\widetilde\lambda(y,U(M,t))\|_{C^1(Q_3)}\Big\}\nonumber,\\
\label{K2}
&K_2:=\min\Big\{\inf_{(M_f,M,t)\in Q_1}\ov g_f(u_f(M_f,M,t)),\inf_{(y,M_f,M,t) \in Q_2}\ov h_f(y,u_f(M_f,M,t))\Big\}>0,\\
&Q_1:=[0,K]^2\times[0,T],\ Q_2:=[0,1]\times[0,K]^2\times[0,T],\ Q_3:=[0,1]\times[0,K]\times[0,T].\nonumber
\end{align}
For any $\delta>0$, let
 \begin{equation*}
 \Omega_{\delta,K}\!\!
  :=\!\!\Big\{\!\vec M(t)\!\!=\!\!(M_1(t),\!\cdots\!, M_{n}(t))\!\in\! C^0([0,\delta]) \colon\!
  \|\vec M\|_{C^0([0,\delta])}\!\!:=\!\!\max_f \|M_f\|_{C^0([0,\delta])}\!\!\leq\!\! K\!
  \Big\},
 \end{equation*}
where the constant $K$ is given by (\ref{K}). 

In order to derive the expression of the contraction mapping function $\vec G$, 
we solve the corresponding \emph{linear Cauchy problem}
(\ref{eq})-(\ref{bcy1}) with given $\vec M\in\Omega_{\delta,K}$.
For any fixed $t\in[0,\delta]$ and $f\in\{1,\cdots,n\}$, we  
trace back the density function $\vec\phi_f$ at time $t$ along the characteristics to the initial and boundary data,
hence we divide the plane time $t$ into several parts. For Phase 1, the velocities $\ov h_f$ are always
positive, we introduce three subsets $\ov\omega^{f,t}_1$, $\ov\omega^{f,t}_2$ and $\ov\omega^{f,t}_3$ of $[0,1]^2$
(see Fig 2 (a))
\begin{align*}
&\ov\omega^{f,t}_1:=\Big\{(x,y)|\ \int_0^t\ov g_f(u_f(\sigma))\, d\sigma\leq x\leq 1,\ \ov\eta(t,\int_0^t \ov g_f(u_f(\sigma))\,d\sigma)\leq y\leq 1\Big\},\\
&\ov\omega^{f,t}_2:=\Big\{(x,y)|\ 0\leq x\leq\int_0^t\ov g_f(u_f(\sigma))\,d\sigma,\ \ov\eta(t,x)\leq y\leq 1\Big\},\\
&\ov\omega^{f,t}_3:=[0,1]^2\backslash(\ov\omega^{f,t}_1\cup\ov\omega^{f,t}_2).
\end{align*}
Here $y=\ov\eta(t,x)$ (see Fig 2 (a))satisfies 
\be \label{oveta}
\displaystyle\f{d\ov\eta}{ds}=\ov h_f(\ov\eta, u_f)(s),\quad \ov\eta(\theta)=0,\quad \theta\leq s\leq t,
\ee
with $\theta$ defined by $x=\displaystyle\int_{\theta}^t\ov g_f(u_f(\sigma))d\sigma$.

If $(x,y)\in \ov\omega^{f,t}_1$, we trace back the density function $\ov\phi^f_k$ at time $t$ 
along the characteristics to the initial data.
Otherwise, we trace back the density function at time $t$ along the characteristics to the boundary data.
For any fixed $t\in[0,\delta]$ and $(x,y)\in[0,1]^2$, let us define characteristics $\xi_i(s):=(x_i(s),y_i(s)),
i=1,\cdots,4$ (see Fig 2), which will be used to construct the contraction mapping function.

In the whole paper, we denote by $(0,x_0,y_0)$ the points on the bottom face $(t=0)$, $(t_0,\alpha_0,0)$ the
points on the left face $(y=0)$, $(t_0,\alpha_0,1)$ the
points on the right face $(y=1)$, $(\tau_0,0,\beta_0)$ the points on the back face
$(x=0)$ and $(\tau_0,1,\beta_0)$ the points on the front face $(x=1)$.
\begin{figure}[htbp!]
\subfigure[]{
\label{a} 
\begin{minipage}[b]{0.45\textwidth}
\centering
\includegraphics[width=2in]{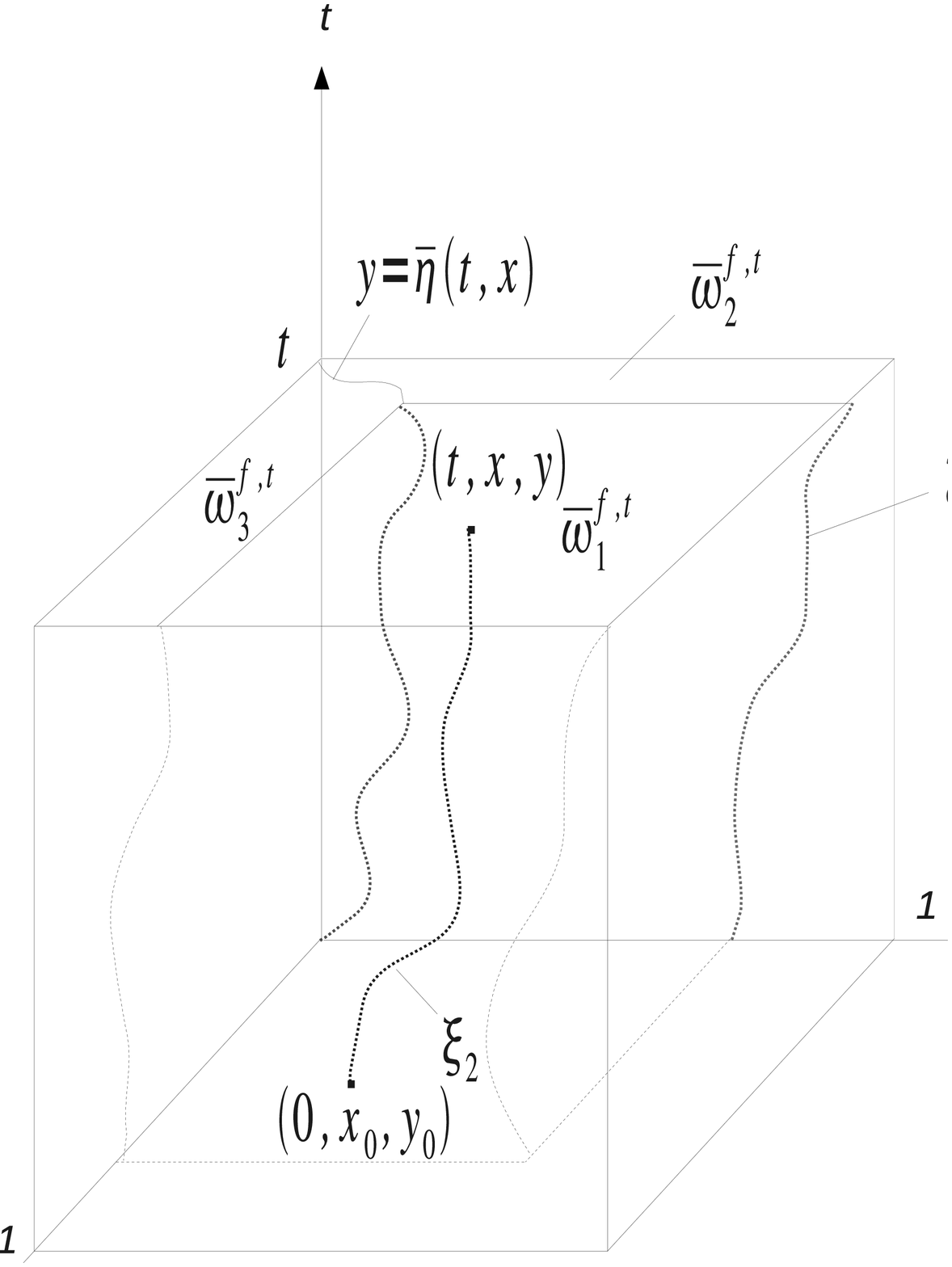}
\end{minipage}}%
\subfigure[]{
\label{b} 
\begin{minipage}[b]{0.45\textwidth}
\centering
\includegraphics[width=2in]{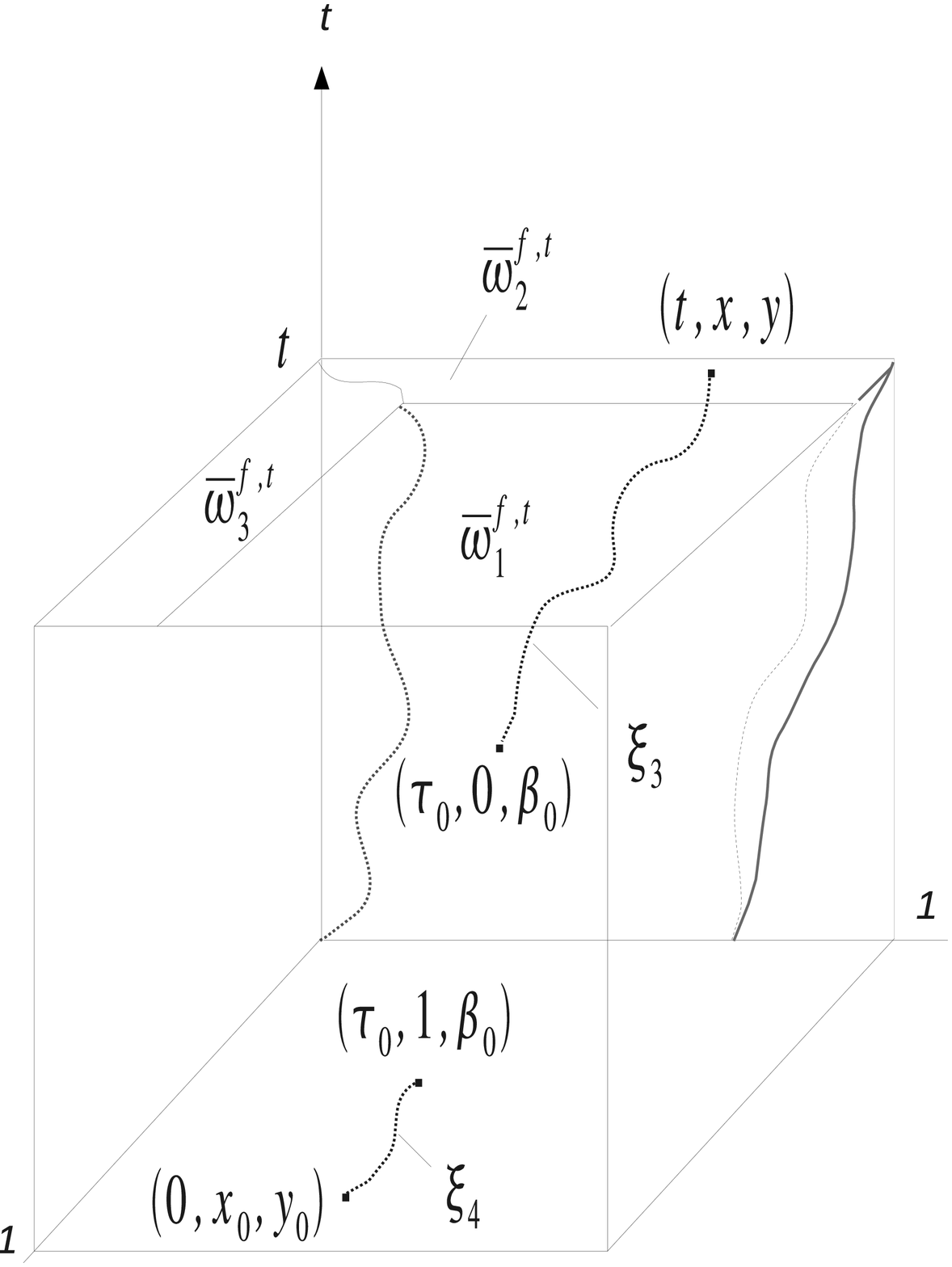}
\end{minipage}}%
\caption{For Phase 1, time $t$ plane is divided into three parts $\ov\omega^{f,t}_1$, $\ov\omega^{f,t}_2$ and $\ov\omega^{f,t}_3$. (a) Case $(t,x,y)\in\ov\omega^{f,t}_1$, characteristic $\xi_2$ connects $(t,x,y)$ with $(0,x_0,y_0)$; (b) Case $(t,x,y)\in\ov\omega^{f,t}_2$,
characteristic $\xi_3$ connects $(t,x,y)$ with $(\tau_0,0,\beta_0)$; According to the coupled boundary between Phase 1 at $x=1$ and Phase 2
at $x=0$, we define characteristic $\xi_4$ which connects $(\tau_0,1,\beta_0)$ with $(0,x_0,y_0)$.}
\label{Fig 3.1} 
\end{figure}

For any fixed $x\in[0,1]$, we define characteristic $\xi_1=(x_1,y_1)$ by (see Fig 2 (a))
\begin{equation*}
\displaystyle\f{dx_1}{ds}=\ov g_f(u_f(s)),\quad 
\displaystyle\f{dy_1}{ds}=\ov h_f(y_1,u_f)(s),\quad \xi_1(t)=(x,1).
\end{equation*}
If $(x,y)\in\ov\omega^{f,t}_1$ (see Fig 2 (a)),
we define characteristic $\xi_2=(x_2,y_2)$ by
\begin{equation*}
\displaystyle\f{dx_2}{ds}=\ov g_f(u_f(s)),\quad 
\displaystyle\f{dy_2}{ds}=\ov h_f(y_2,u_f)(s),\quad \xi_2(t)=(x,y).
\end{equation*}
One has $\xi_2(s)\in[0,1]^2,\ \forall s\in[0,t]$. Let us define
\be\label{x2}
(x_0,y_0):=(x_2(0),y_2(0)).
\ee
If $(x,y)\in\ov\omega^{f,t}_2$ (see Fig 2 (b)),
we define characteristic $\xi_3=(x_3,y_3)$ by
\begin{equation*}
\displaystyle\f{dx_3}{ds}=\ov g_f(u_f(s)),\quad 
\displaystyle\f{dy_3}{ds}=\ov h_f(y_3,u_f)(s),\quad \xi_3(t)=(x,y).
\end{equation*}
There exists a unique $\tau_0\in[0,t]$ such that
$x_3(\tau_0)=0$, so that we can define 
\be\label{x3}
\beta_0:=y_3(\tau_0).
\ee
According to the coupled boundary between Phase 1 at $x=1$ and Phase 2 at $x=0$, 
for any fixed $(x_0,y_0)\in[0,1]^2$, we define characteristic $\xi_4\!\!=\!\!(x_4,y_4)$ by (see Fig 2 (b))
\begin{equation*}
\displaystyle\f{dx_4}{ds}=\ov g_f(u_f(s)),\quad 
\displaystyle\f{dy_4}{ds}=\ov h_f(y_4,u_f)(s),\quad \xi_4(0)=(x_0,y_0).
\end{equation*}

For Phase 3, due to
the fact that velocities $\widetilde h_f$ change sign in Phase 3, we divide the plane at time $t$ into four subsets $\widetilde\omega^{f,t}_1$, $\widetilde\omega^{f,t}_2$, $\widetilde\omega^{f,t}_3$ and $\widetilde\omega^{f,t}_4$ of $[0,1]^2$ (see Fig 3 (a))
\begin{align*}
&\widetilde\omega^{f,t}_1:=\!\!\Big\{\!(x,y)|\widetilde g_ft\leq x\leq 1,\ \widetilde\eta_1(t,\widetilde g_ft)\leq y\leq \widetilde\eta_2(t,\widetilde g_ft)\Big\},\\
&\widetilde\omega^{f,t}_2:=\!\!\Big\{\!(x,y)|0\leq x\leq \widetilde g_ft,\ 0\leq y\leq \widetilde\eta_1(t,x)\Big\}
\!\cup\!\Big\{(x,y)|\ \widetilde g_ft\leq x\leq 1,0\leq y\leq \widetilde\eta_1(t,\widetilde g_ft)\!\Big\},\\
&\widetilde\omega^{f,t}_3:=\!\!\Big\{(x,y)|0\leq x\leq \widetilde g_ft,\ \widetilde\eta_1(t,x)\leq y\leq \widetilde\eta_2(t,x)\Big\},\\
&\widetilde\omega^{f,t}_4:=\![0,1]^2\backslash(\widetilde\omega^{f,t}_1\cup\widetilde\omega^{f,t}_2\cup\widetilde\omega^{f,t}_3).
\end{align*} 
Here 
$y=\widetilde\eta_1(t,x)$ and $y=\widetilde\eta_2(t,x)$ (see Fig 3 (a)) satisfy  
\begin{align*}
&\displaystyle\f{d\widetilde\eta_1}{ds}=\widetilde h_f(\widetilde\eta_1, u_f)(s),\quad \widetilde\eta_1(t-\f{x}{\widetilde g_f})=0,
\quad t-\f{x}{\widetilde g_f}\leq s\leq t,\\
&\f{d\widetilde\eta_2}{ds}=\widetilde h_f(\widetilde\eta_2, u_f)(s),\quad \widetilde\eta_2(t-\f{x}{\widetilde g_f})=1,
\quad t-\f{x}{\widetilde g_f}\leq s\leq t.
\end{align*}
If $(x,y)\in \widetilde\omega^{f,t}_1$, we trace back the density function $\widetilde\phi^f_k$ at time $t$ 
along the characteristics to the initial data.
Otherwise, we trace back the density function at time $t$ along the characteristics to the boundary data.
For any fixed $t\in[0,\delta]$ and $(x,y)\in[0,1]^2$, let us define characteristics $\xi_i(s):=(x_i(s),y_i(s)), i=5,\cdots,10$ (see Fig 3),
which will be used to construct the contraction mapping function.

If $(x,y)\in\widetilde\omega^{f,t}_1$ (see Fig 3 (a)),
we define characteristic
$\xi_5=(x_5,y_5)$ by
\begin{equation*}
\displaystyle\f{dx_5}{ds}=\widetilde g_f,\quad 
\displaystyle\f{dy_5}{ds}=\widetilde h_f(y_5,u_f)(s),\quad \xi_5(t)=(x,y).
\end{equation*}
One has $\xi_5(s)\in[0,1]^2,\ \forall s\in[0,t]$. Let us define 
\be \label{x5}
(x_0,y_0):=(x_5(0),y_5(0)).
\ee
If $(x,y)\in\widetilde\omega^{f,t}_2$ (see Fig 3 (a)), 
we define characteristic $\xi_6=(x_6,y_6)$ by
\begin{equation*}
\displaystyle\f{dx_6}{ds}=\widetilde g_f,\quad
\displaystyle\f{dy_6}{ds}=\widetilde h_f(y_6,u_f)(s),\quad \xi_6(t)=(x,y).
\end{equation*}
There exists a unique $t_0\in[0,t]$ such that $y_6(t_0)=0$,
so that we can define 
\be\label{x6}
\alpha_0:=x_6(t_0).
\ee

For any fixed cell cycle $k=1,\cdots,N$, according to the coupled boundary between Phase 1 and Phase 3 
(corresponding to $\ov\phi^f_k(t,x,y)$ at $y=1$ and $\widetilde\phi^f_k(t,x,y)$ at $y=0$), 
we separate the right face of Phase 1 into two parts.
For $k=1,\cdots,N$, we define characteristic $\xi_7$ passing through 
the right face intersects with the bottom face at $(0,x_0,y_0)$ (see Fig 3 (b)). 
For $k=2,\cdots,N$, we define characteristic $\xi_8$ passing through the right face intersects with the back face, and then
back to the bottom face at $(0,x_0,y_0)$ of the $k-1$ cell cycle in Phase 2 (see Fig 3 (b)).
Hence, for any fixed $(x_0,y_0)\in[0,1]^2$, we define $\xi_7=(x_7,y_7)$ by 
\begin{equation*}
\displaystyle\f{dx_7}{ds}=\ov g_f(u_f(s)),\quad
\displaystyle\f{dy_7}{ds}=\ov h_f(y_7,u_f)(s),\quad \xi_7(0)=(x_0,y_0),
\end{equation*}
 and $\xi_8=(x_8,y_8)$ by
\begin{equation*}
\displaystyle\f{dx_8}{ds}=\ov g_f(u_f(s)),\quad \displaystyle\f{dy_8}{ds}=\ov h_f(y_8,u_f)(s),\quad \xi_8(\f{1-x_0}{\widehat g_f})=(0,y_0).
\end{equation*}
\begin{figure}[htbp!]
\subfigure[]{
\label{c} 
\begin{minipage}[b]{0.32\textwidth}
\centering
\includegraphics[width=1.6in]{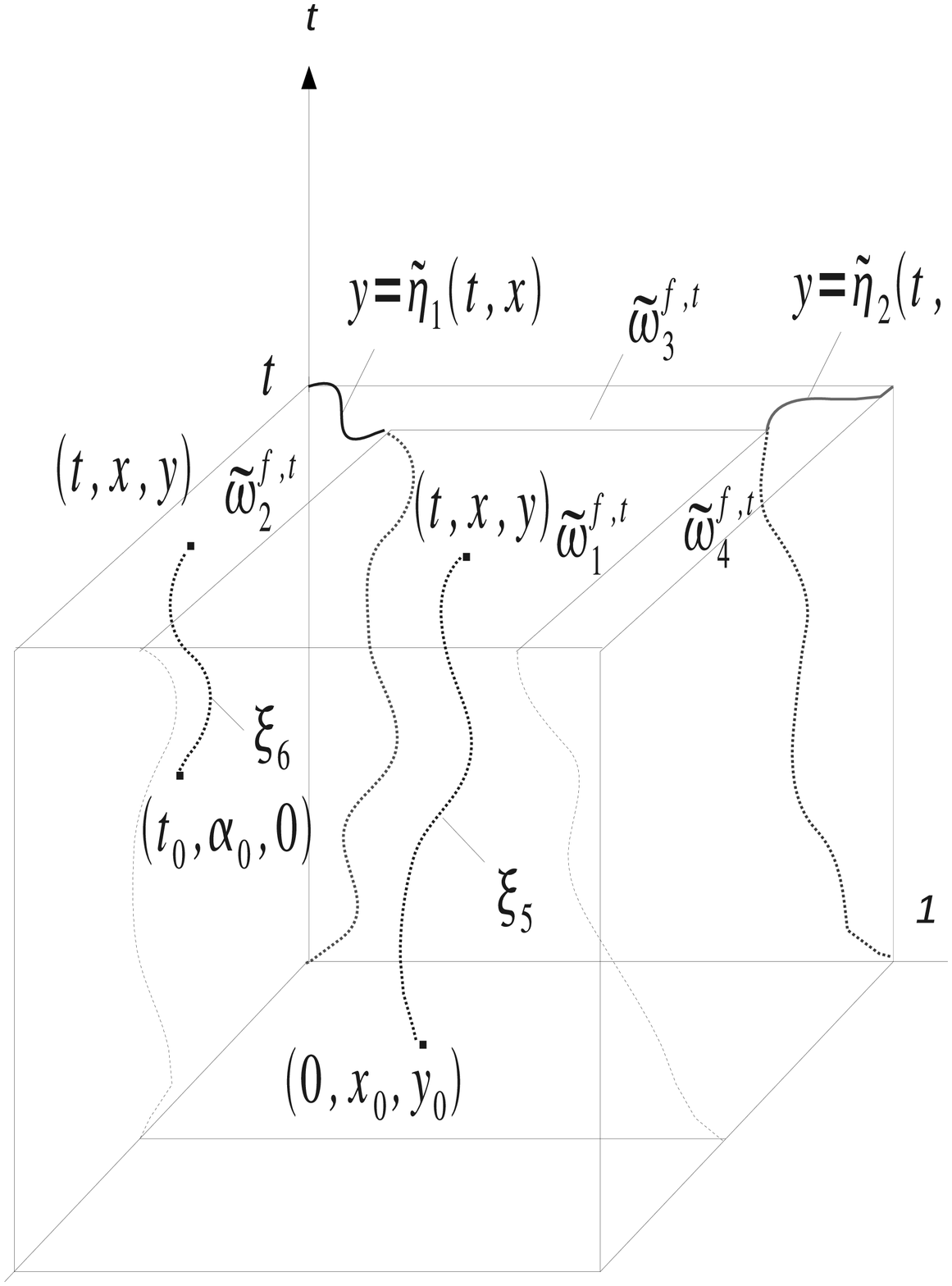}
\end{minipage}}%
\subfigure[]{
\label{d} 
\begin{minipage}[b]{0.32\textwidth}
\centering
\includegraphics[width=1.6in]{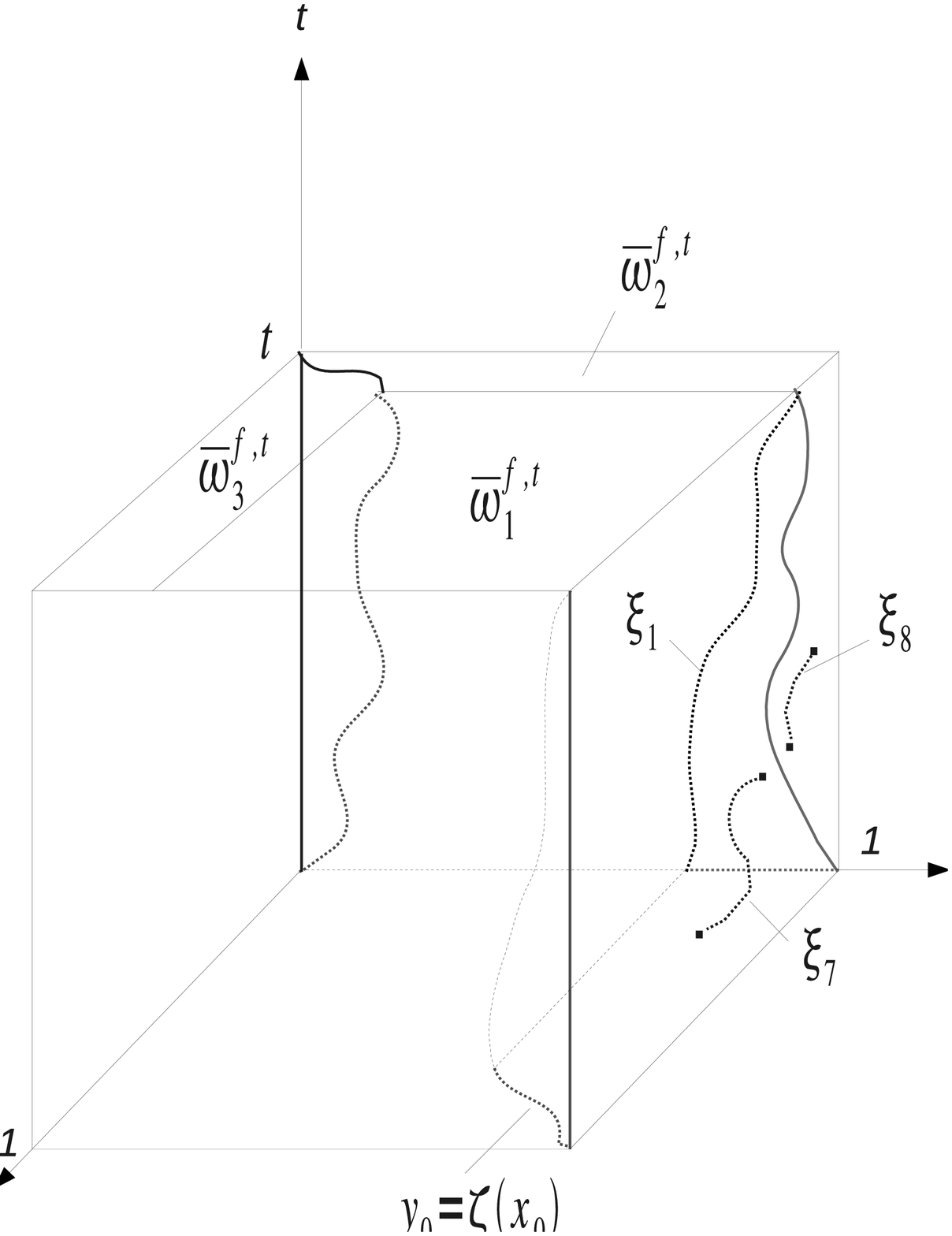}
\end{minipage}}%
\subfigure[]{
\label{e} 
\begin{minipage}[b]{0.32\textwidth}
\centering
\includegraphics[width=1.6in]{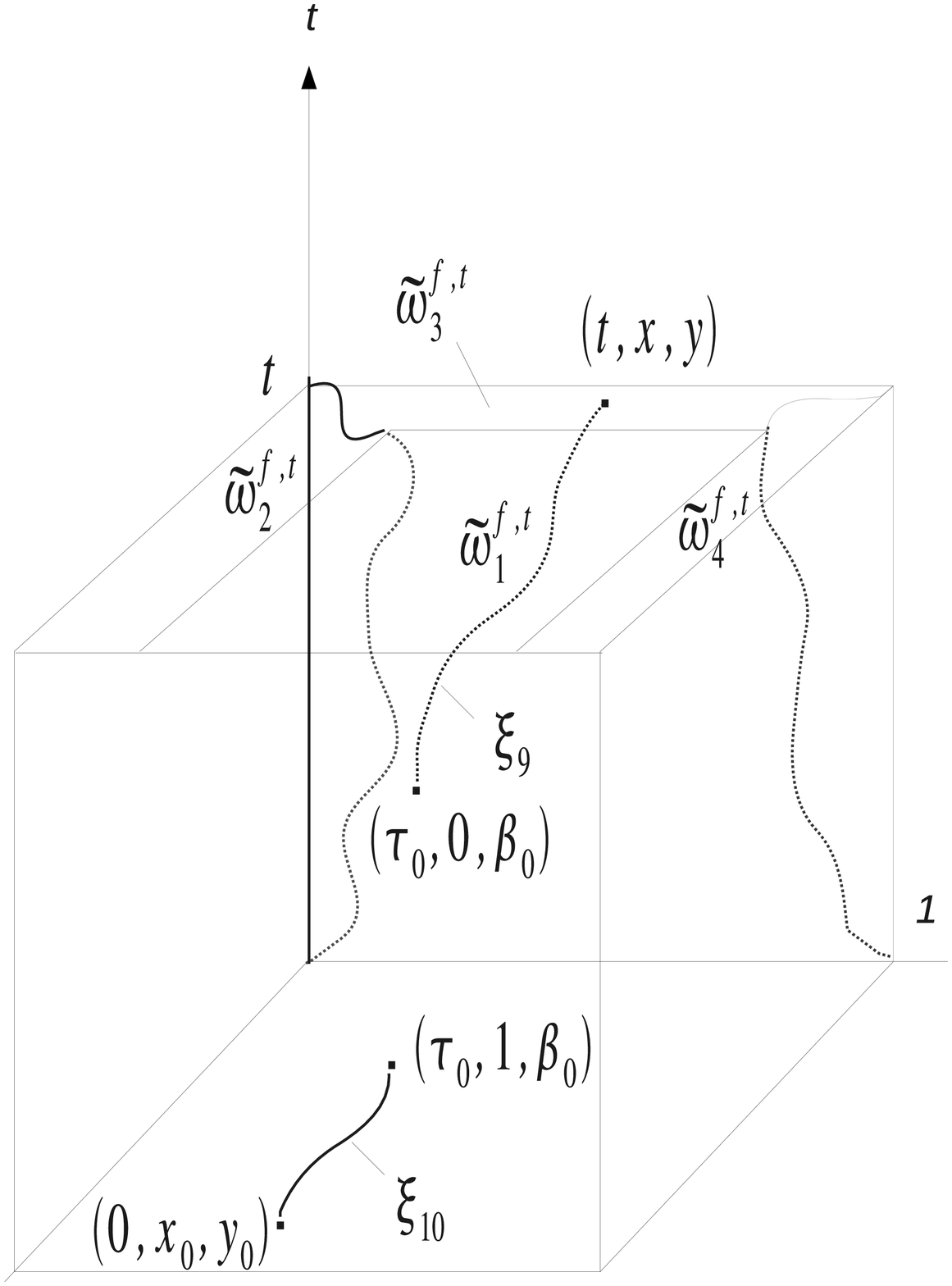}
\end{minipage}}%
\caption{For Phase 3, time $t$ plane is divided into four parts $\widetilde\omega^{f,t}_1$, $\widetilde\omega^{f,t}_2$, $\widetilde\omega^{f,t}_3$ and $\widetilde\omega^{f,t}_4$.
(a) Case $(t,x,y)\in\widetilde\omega^{f,t}_1$, characteristic $\xi_5$ connects $(t,x,y)$ with $(0,x_0,y_0)$; Case $(t,x,y)\in\widetilde\omega^{f,t}_2$, 
characteristic $\xi_6$ connects $(t,x,y)$ with $(t_0,\alpha_0,0)$;
(b) According to the coupled boundary between Phase 1 at $y=1$ and Phase 3 at $y=0$, we separate the right face into two parts, 
and we define characteristic $\xi_7$ and characteristic $\xi_8$;
(c) Case $(t,x,y)\in\widetilde\omega^{f,t}_3$, characteristic $\xi_9$ connects $(t,x,y)$ with $(\tau_0,0,\beta_0)$; According to the coupled boundary
between two consecutive cell cycles, we define characteristic $\xi_{10}$ which connects $(\tau_0,1,\beta_0)$ with $(0,x_0,y_0)$.}
\label{Fig 3} 
\end{figure}

If $(x,y)\in\widetilde\omega^{f,t}_3$ (see Fig 3 (c)), 
we define characteristic $\xi_9=(x_9,y_9)$ by
\begin{equation*}
\displaystyle\f{dx_9}{ds}=\widetilde g_f,\quad
\displaystyle\f{dy_9}{ds}=\widetilde h_f(y_9,u_f)(s),\quad \xi_9(t)=(x,y).
\end{equation*}
There exists a unique $\tau_0\in[0,t]$ such 
that $x_9(\tau_0)=0$, so that we can define 
\be\label{x9}
\beta_0:=y_9(\tau_0).
\ee
According to the coupled boundary between two consecutive cell cycles (corresponding to $\widetilde\phi^f_{k-1}(t,x,y)$ at $x=1$ and
$\widetilde\phi^f_k(t,x,y)$ at $x=0$) $(k=2,\cdots,N)$,
for any fixed $(x_0,y_0)\in[0,1]^2$, 
we define $\xi_{10}=(x_{10},y_{10})$ by (see Fig 3 (c)) 
\begin{equation*}
\displaystyle\f{dx_{10}}{ds}=\widetilde g_f,\quad \displaystyle\f{dy_{10}}{ds}=\widetilde h_f(y_{10},u_f)(s),\quad \xi_{10}(0)=(x_0,y_0).
\end{equation*}

With all of the above defined characteristics and noting Lemma \ref{jacobi} in Appendix 6.3, we are now able to define a map $\vec G(\vec M)(t):=\Big(G_1(M_1,M)(t),\cdots,G_{n}(M_{n},M)(t)\Big)$ for all $t\in[0,\delta]$ with
\begin{align}\label{GGG}
& G_f(M_f,M)(t)\!\!:=\!\!\int_0^{y_1(0)}\!\!\int_0^{1-\int_0^t \ov g_f(u_f(\sigma)) \,d\sigma}\!\!\!\!\!\!\!\!a_1\gamma_s^2\sum_{k=1}^N\ov\phi^f_{k0}(x_0,y_0)y_2(t)e^{-\int_0^t\ov\lambda(y_2,U)(s)\, ds}dx_0dy_0 \\
&+\!\!\int_{1-\widehat g_ft}^1\!\!\int_0^{y_1(\f{1-x_0}{\widehat g_f})}\!\!\!\!\!\!2(a_2-a_1)\gamma^2_s\sum_{k=2}^{N}\widehat\phi^f_{k-1,0}(x_0,y_0) y_3(t) e^{-\int_{\f{1-x_0}{\widehat g_f}}^t\ov\lambda(y_3,U)(s)\, ds}\, dy_0\, dx_0 \nonumber\\
&+\!\!\int_0^1\!\!\int_0^{1-\widehat g_ft}\!\!\!\!\!\!(a_2-a_1)\gamma^2_s\sum_{k=1}^N y_0\widehat\phi^f_{k0}(x_0,y_0)\, dx_0\, dy_0\nonumber\\
&+\!\!\int_{1-\int_0^t \ov g_f(u_f(\sigma))d\sigma}^1\!\!\int_0^{\zeta(x_0)}\!\!\!\!\!\!\!\!a_1\gamma^2_s\sum_{k=1}^N(y_0\!\!+\!\!\!\!\int_0^{\tau_0}\!\!\ov h_f(y_4,u_f)(\sigma)d\sigma)\ov\phi^f_{k0}(x_0,y_0)
e^{-\!\!\int_0^{\tau_0}\ov\lambda(y_4,U)(s)ds}\!dy_0dx_0\nonumber\\
&+\!\!\int_0^1\!\!\int_0^{1-\widetilde g_ft} a_2\gamma_0\sum_{k=1}^N(\gamma_0 y_5(t)\!\!+\!\!\gamma_s)\widetilde\phi^f_{k0}(x_0,y_0)e^{-\int_0^t\widetilde\lambda(y_5,U)(s)\, ds}dx_0dy_0 \nonumber\\
&+\!\!\int_{y_1(0)}^1\!\!\int_0^{\zeta^{-1}(y_0)}\!\!\!\!\!\!\!\!
a_1\gamma_0\sum_{k=1}^N(\gamma_0 y_6(t)\!\!+\!\!\gamma_s)\ov\phi^f_{k0}(x_0,y_0)e^{-(\int_0^{t_0}\ov\lambda(y_7,U)(s)ds+\int_{t_0}^t\widetilde\lambda(y_6,U)(s)ds)}\, dx_0\, dy_0\nonumber\\
&+\!\!\int_{1-\widehat g_ft}^1\!\!\int_{y_1(\f{1-x_0}{\widehat g_f})}^1 \!\!\!\!\!\!\!\!2(a_2\!\!-\!\!a_1)\gamma_0
\sum_{k=2}^{N}(\gamma_0 y_6(t)\!\!+\!\!\gamma_s)\widehat\phi^f_{k-1,0}(x_0,y_0)e^{-\!(\int_{\f{1-x_0}{\widehat g_f}}^{\ov t_0}\ov\lambda(y_8,U)(s)\, ds +\int_{{\ov t_0}}^t\widetilde\lambda(y_6,U)(s)\, ds)}\!\!\!\!\!\!\!\!\!\!\!\!\!\!\!\!\!\!\!\!\!\!\!\!dy_0 dx_0\nonumber\\
&+\!\!\int_0^1\!\!\int_{1-\widetilde g_ft}^1\!\!\!\!a_2\gamma_0\sum_{k=1}^{N-1}(\gamma_0 y_9(t)\!\!+\!\!\gamma_s)\widetilde\phi^f_{k0}(x_0,y_0)
e^{-(\int_{0}^{\f{1-x_0}{\widetilde g_f}}\widetilde\lambda(y_{10},U)(s)ds+\int_{\f{1-x_0}{\widetilde g_f}}^t\widetilde\lambda(y_9,U)(s)ds)}\!\!dx_0dy_0\nonumber .
\end{align}
Here $y_i(t)\ (i=2,\cdots,10)$ can also be determined by $(x_0,y_0)$,
$\tau_0$ is defined by $1-x_0=\displaystyle\int_0^{\tau_0}\ov g_f(u_f(\sigma))d\sigma$,
$t_0$ is defined by $y_7(t_0)=1$, 
and $\ov t_0$ is defined by $y_8(\ov t_0)=1$.
In (\ref{GGG}), $y_0=\zeta(x_0):=\eta(0,x_0)$ (see Fig 3 (b)),
where $\eta(s,x_0)$ satisfies
\begin{equation*}
\displaystyle\f{d\eta}{ds}=\ov h_f(\eta, u_f)(s),\quad \eta(\theta)=1,\quad 0\leq s\leq\theta,
\end{equation*}
with $\theta$ defined by $1-x_0=\displaystyle\int_0^{\theta}\ov g_f(u_f(\sigma))d\sigma$.

Next we prove the following fixed point theorem.
\begin{lem}\label{contraction}
If $\delta$ is small enough, $\vec G$ is a contraction mapping on $\Omega_{\delta,K}$
with respect to the $C^0$ norm.
\end{lem}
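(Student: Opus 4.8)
The plan is to show that $\vec G$ maps $\Omega_{\delta,K}$ into itself and is a strict contraction in the $C^0$ norm once $\delta$ is chosen small. The self-mapping part should be essentially a consequence of the way $K$ was defined in (\ref{K}): each of the eight integral terms in (\ref{GGG}) represents a contribution to $M_f(t)$ obtained by tracing cell density back along characteristics; the exponential factors coming from the loss terms $\ov\lambda,\widetilde\lambda$ are bounded by $1$ in absolute value (or at worst by $e^{K_1\delta}$, which I can absorb), the weights $y$, $\gamma_0 y+\gamma_s$, etc.\ are bounded on $[0,1]^2$, and the change of variables $(x,y)\mapsto(x_0,y_0)$ along characteristics has Jacobian controlled by Lemma~\ref{jacobi} in Appendix 6.3. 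Summing the $L^1$-norms of the initial data $\ov\phi^f_{k0},\widehat\phi^f_{k0},\widetilde\phi^f_{k0}$ and multiplying by the geometric factor $2^N(\gamma_0+\gamma_s)^2$ and the constants $a_1,a_2-a_1,a_2$ is exactly how $K$ was engineered, so $\|\vec G(\vec M)\|_{C^0([0,\delta])}\le K$ holds for all $\delta$ small (the factor $e^{K_1\delta}\to 1$). Continuity of $t\mapsto G_f(M_f,M)(t)$ follows from continuous dependence of the characteristics and of the domains of integration $\ov\omega^{f,t}_i,\widetilde\omega^{f,t}_i$ on $t$.

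For the contraction estimate, I would take two points $\vec M,\vec M^*\in\Omega_{\delta,K}$ and estimate $|G_f(M_f,M)(t)-G_f(M^*_f,M^*)(t)|$ term by term. The difference enters through: (i) the controls $u_f(M_f(\sigma),M(\sigma),\sigma)$, which are Lipschitz in $(M_f,M)$ since $u_f\in C^1$ and $U\in C^1$, so $\|u_f-u^*_f\|_{C^0([0,\delta])}\le C\|\vec M-\vec M^*\|_{C^0([0,\delta])}$; (ii) the characteristic curves $\xi_i$, $\xi^*_i$, which differ because the velocity fields $\ov g_f,\ov h_f,\widetilde g_f,\widetilde h_f$ depend on $u_f$ — a Gronwall argument on the ODEs defining $\xi_i$ gives $\|\xi_i-\xi^*_i\|_{C^0}\le C\delta\,\|u_f-u^*_f\|_{C^0}\le C\delta\|\vec M-\vec M^*\|_{C^0}$, using $K_1$ as the Lipschitz/bound constant and the time horizon $\delta$; (iii) the exponential loss factors, again Lipschitz in the curves and controls with a factor $\delta$; and (iv) the endpoints of integration (e.g.\ $1-\int_0^t\ov g_f(u_f)\,d\sigma$, $y_1(0)$, $\zeta(x_0)$, the breakpoints $\widetilde g_f t$), each of which moves by $O(\delta\|\vec M-\vec M^*\|_{C^0})$, contributing a boundary term weighted by $\|\vec\phi_{f0}\|_{L^\infty}$. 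Crucially, since the initial data lie only in $L^\infty$ and not in any smoother space, in step (ii) I cannot differentiate $\vec\phi_{f0}$; instead I handle the "$\vec\phi_{f0}(x_0,y_0)-\vec\phi_{f0}(x^*_0,y^*_0)$'' contribution by the standard device of first approximating $\vec\phi_{f0}$ in $L^1$ by a continuous function, absorbing the approximation error, and using uniform continuity of the mollification — or, more cleanly, by performing the change of variables so that the $\vec\phi_{f0}$ argument is fixed and the perturbation is thrown entirely onto the (smooth) Jacobian, the weight, and the domain, all of which are Lipschitz. Collecting all contributions yields
\begin{equation*}
\|\vec G(\vec M)-\vec G(\vec M^*)\|_{C^0([0,\delta])}\le C(K,K_1,K_2,T)\,\delta\,\|\vec M-\vec M^*\|_{C^0([0,\delta])},
\end{equation*}
and choosing $\delta$ with $C\delta<1$ finishes the proof.

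The main obstacle I anticipate is step (iv) combined with the low regularity of the data: controlling how the \emph{regions} $\ov\omega^{f,t}_i$, $\widetilde\omega^{f,t}_i$ and the corresponding integration limits in (\ref{GGG}) depend on $\vec M$ without ever differentiating $\vec\phi_{f0}$. The boundary-layer strips swept out when a breakpoint such as $\int_0^t\ov g_f(u_f(\sigma))\,d\sigma$ or $\widetilde g_f t$ is perturbed have width $O(\delta\|\vec M-\vec M^*\|_{C^0})$, so each such term is bounded by (width)$\times\|\vec\phi_{f0}\|_{L^\infty}\times$(bounded weight), which is fine — but making this rigorous requires care in splitting the domain of each integral into a common part (where the integrand difference is Lipschitz-small, of size $O(\delta)$) and a symmetric-difference part (where only the measure is small). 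The sign change of $\widetilde h_f$ in Phase 3, which forces the four-region decomposition with the curves $\widetilde\eta_1,\widetilde\eta_2$, makes this bookkeeping heavier than in the one-dimensional positive-velocity references \cite{Wang,SW}, but introduces no new analytic difficulty beyond what the Gronwall/Jacobian estimates already supply. The coupling through the boundary conditions (the characteristics $\xi_4,\xi_7,\xi_8,\xi_{10}$ that jump between phases and cell cycles) only lengthens the list of terms; since each jump is governed by an explicit $C^1$ factor such as $\frac{2\tau_{gf}}{a_1\ov g_f(u_f(t))}$ or $\widetilde g_f$, the Lipschitz dependence propagates cleanly across the $N$ cell cycles, at the cost of the harmless factor $2^N$ already present in $K$.
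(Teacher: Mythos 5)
Your proposal is correct and follows essentially the same route as the paper: the paper defines $G_f$ in \eqref{GGG} directly in the backward coordinates $(x_0,y_0)$, so the initial data are evaluated at the integration variable and never need to be differentiated or compared at perturbed points --- this is exactly your ``more cleanly'' alternative --- and the contraction constant is then assembled from Gronwall-type estimates on the characteristics and controls (\eqref{yyi}--\eqref{1gf}) plus boundary-strip terms weighted by $\|\ov\phi^f_{k0}\|$, $\|\widehat\phi^f_{k0}\|$, $\|\widetilde\phi^f_{k0}\|$, yielding the coefficients $C_1^f,C_2^f$ of \eqref{Cf}. One caveat: of your two devices for handling the $L^\infty$ data, only the change-of-variables one actually closes the argument, since mollification leaves an additive error of order $1/l$ that does not scale with $\|\ov{\vec M}-\vec M\|_{C^0([0,\delta])}$ and therefore cannot produce a Lipschitz bound with constant less than $1$.
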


\begin{proof}
It is easy to check that $\vec G$ maps into $\Omega_{\delta,K}$ itself if
 \be \label{delta}  
 0<\delta \leq \min\{\f{1}{2K_1},T\},
 \ee
where $K_1$ is defined by (\ref{K1}).
Let $\vec M(t)\!\!=\!\!(M_1(t),\!\cdots\!,M_{n}(t))$, $\ov {\vec M}(t)\!\!=\!\!(\ov M_1(t),\!\cdots\!,\ov M_{n}(t))$ $\in\!\Omega_{\delta,K}$, and 
$M\!\!=\!\!\sum_{f=1}^{n}M_f$, $\ov M\!\!=\!\!\sum_{f=1}^{n}\ov M_f$. 
In order to estimate $\|\vec G(\ov{\vec M})-\vec G(\vec M)\|_{C^0([0,\delta])}$, 
we first estimate the norms $\|G_f(\ov M_f,\ov M)-G_f(M_f,M)\|_{C^0([0,\delta])}$ separately.
Observing the definition (\ref{GGG}) of $G_f$, it is sufficient to estimate $\|\ov y_i-y_i\|_{C^0([0,\delta])}$, where
\begin{eqnarray}\label{yi} 
y_i&=&C+\displaystyle{\int_{\alpha_i}^{\beta_i}{h_f(y_i,u_f)}}(\sigma)d\sigma,\nonumber\\
\ov y_i&=&C+\displaystyle{\int_{\alpha_i}^{\beta_i}{h_f(\ov y_i,\ov u_f)}}(\sigma)d\sigma.
\end{eqnarray}
Here $C$ denotes various constants, $h_f$ represents $\ov h_f$ or $\widetilde h_f$, and $0\!\!\leq\!\! \alpha_i\!\!\leq\!\! \beta_i\!\!\leq \!\!t\!\!\leq \!\!\delta\!\!\leq\!\!\min\{\displaystyle\f{1}{2K_1},T\}$.
We have
\begin{align*}
|{\overline y}_i(s)-y_i(s)|
&\leq \int_{\alpha_i}^{\beta_i}|h_f(\ov y_i,\ov u_f)(\sigma)-h_f(y_i,u_f)(\sigma)|\, d\sigma\nonumber\\
& \leq tK_1(\|\ov M_f-M_f\|_{C^0([0,\delta])}+\|\ov M-M\|_{C^0([0,\delta])})+tK_1\|\ov y_i-y_i\|_{C^0([\alpha_i,\beta_i])},
\end{align*}
hence
\be \label{yyi}
\|\ov y_i-y_i\|_{C^0([0,\delta])}\leq\f{tK_1(\|\ov M_f-M_f\|_{C^0([0,\delta])}+\|\ov M-M\|_{C^0([0,\delta])})}{1-tK_1}.
\ee
By \eqref{yyi}, we have
\begin{align}\label{1hf}
&\int_{\alpha_i}^{\beta_i}|h_f(\ov y_i,\ov u_f)(\sigma)-h_f(y_i,u_f)(\sigma)|d\sigma\\
&\leq tK_1(\|\ov M_f-M_f\|_{C^0([0,\delta])}+\|\ov M-M\|_{C^0([0,\delta])})+tK_1\|\ov y_i-y_i\|_{C^0([0,\delta])}\nonumber\\
&\leq tK_1(\| \ov M_f\!\!-\!\! M_f\|_{C^0([0,\delta])}\!\!+\!\!\| \ov M\!\!-\!\! M\|_{C^0([0,\delta])})\!\!+\!\!\f{t^2K_1^2(\| \ov M_f\!\!-\!\! M_f\|_{C^0([0,\delta])}\!\!+\!\!\| \ov M\!\!-\!\! M\|_{C^0([0,\delta])})}{1-tK_1}\nonumber\\
&=\f{tK_1(\| \ov M_f-M_f\|_{C^0([0,\delta])}+\| \ov M-M\|_{C^0([0,\delta])})}{1-tK_1}\nonumber.
\end{align}
Similarly, we have
\begin{align}\label{1lambda}
\int_{\alpha_i}^{\beta_i}|\lambda(\ov y_i,\ov U)(\sigma)-\lambda(y_i,U)(\sigma)|d\sigma
\leq &tK_1\| \ov M-M\|_{C^0([0,\delta])}+tK_1\| \ov y_i-y_i\|_{C^0([0,\delta])}\nonumber\\
\leq &\f{t^2K_1^2\| \ov M_f-M_f\|_{C^0([0,\delta])}+tK_1\| \ov M-M\|_{C^0([0,\delta])}}{1-tK_1},
\end{align}
where $\lambda$ represents $\ov\lambda$ or $\widetilde\lambda$, and
\be\label{1gf}
\int_{\alpha_i}^{\beta_i}|\ov g_f(\ov u_f(\sigma))-\ov g_f(u_f(\sigma))|d\sigma
\leq tK_1(\| \ov M_f-M_f\|_{C^0([0,\delta])}+\| \ov M-M\|_{C^0([0,\delta])}).
\ee
By \eqref{yyi}-\eqref{1gf} and the definition \eqref{GGG} of $G_f$, we have
\begin{align}
&\|G_f(\ov M_f,\ov M)-G_f(M_f,M)\|_{C^0([0,\delta])}\nonumber\\
&\leq tC_1^f\| \ov M_f-M_f\|_{C^0([0,\delta])}+tC_2^f\| \ov M-M\|_{C^0([0,\delta])}.
\end{align}
The expressions of $C_1^f$ and $C_2^f$ are given by (\ref{Cf}) in Appendix 6.3. They depend on
$\|\ov\phi^f_{k0}\|, \|\widehat\phi^f_{k0}\|, \|\widetilde\phi^f_{k0}\|, K_1$ and $K_2$.
Thus we have
\begin{align}\label{FM}
&\|\vec G(\ov{\vec M})-\vec G(\vec M)\|_{C^0([0,\delta])}\\
=&\Big\|\Big(G_1(\ov M_1,\ov M)-G_1(M_1,M),\cdots, G_{n}(\ov M_{n},\ov M)-G_{n}(M_{n},M)\Big)\Big\|_{C^0([0,\delta])}\nonumber\\
=&\max_{f}\| G_f(\ov M_f,\ov M)-G_f(M_f,M)\|_{C^0([0,\delta])}\nonumber\\
\leq & \max_{f}\Big(tC_1^f\|\ov M_f -M_f\|_{C^0([0,\delta])}+tC_2^f\| \ov M-M\|_{C^0([0,\delta])}\Big)\nonumber\\
\leq & \max_{f}\Big(tC_1^f\|\ov M_f\!\! -\!\!M_f\|_{C^0([0,\delta])}\!\!+\!tC_2^f\|\ov M_1\!\! -\!\!M_1\|_{C^0([0,\delta])}\!\!+\!\cdots +\!tC_2^f\|\ov M_{n}\!\! -\!\!M_{n}\|_{C^0([0,\delta])}\!\Big)\nonumber\\
\leq & \max_{f}t(C_1^f+C_2^f)\Big(\|\ov M_1 -M_1\|_{C^0([0,\delta])}+\cdots +\|\ov M_{n} -M_{n}\|_{C^0([0,\delta])}\Big)\nonumber\\
\leq & nt\max_{f}(C_1^f+C_2^f)\max_{f}\|\ov M_f -M_f\|_{C^0([0,\delta])}\nonumber .
\end{align}
We finally get 
\be 
\|\vec G(\ov{\vec M})-\vec G(\vec M)\|_{C^0([0,\delta])}\leq nt\max_{f}(C_1^f+C_2^f)\|\ov{\vec M} -\vec M\|_{C^0([0,\delta])}.
\ee
Hence we can choose $\delta$ small enough (depending on $\|\ov\phi^f_{k0}\|, \|\widehat\phi^f_{k0}\|, \|\widetilde\phi^f_{k0}\|, K_1, K_2, T $) so that
\be
\|\vec G(\ov{\vec M})-\vec G(\vec M)\|_{C^0([0,\delta])}\leq \f{1}{2}\|\ov{\vec M}-\vec M\|_{C^0([0,\delta])}.
\ee
\end{proof}
By Lemma \ref{contraction} and the contraction mapping principle, there exists a unique fixed point $\vec M=\vec G(\vec M)$ in $\Omega_{\delta,K}$.
\subsection{Construction of a local solution to the Cauchy problem}
Now we show how the fixed point $\vec M$ allows to find a solution to Cauchy problem (\ref{eq})-(\ref{bcy1}) for $t\in[0,\delta]$.
Let us recall the definition of three subsets $\ov\omega^{f,t}_1$, $\ov\omega^{f,t}_2$ and $\ov\omega^{f,t}_3$ of $[0,1]^2$, the definition
of the characteristics $\xi_2, \xi_3$
and also the definition (\ref{x2}) of $(x_0,y_0)$ and (\ref{x3}) of $(\tau_0,\beta_0)$.
For $k=1$, we define $\ov\phi^f_1(t,x,y)$ by
\be \label{for1}
\ov\phi^f_1(t,x,y):=\left\{
\begin{array}{l}
\ov\phi^f_{10}(x_0, y_0)e^{-\int_0^t[\ov\lambda(y_2,U)+\f{\pa\ov h_f}{\pa y}(y_2,u_f)](\sigma)\, d\sigma},\quad \mbox{if}\ (x,y)\in \ov\omega^{f,t}_1,\\
 0,\quad \mbox{else}.
 \end{array}
\right.
\ee
For $k=2,\cdots,N$, we define $\ov\phi^f_k(t,x,y)$ by
\be \label{so1}
\ov\phi^f_k(t,x,y):=\left\{
\begin{array}{l}
\ov\phi^f_{k0}(x_0, y_0)e^{-\int_0^t[\ov\lambda(y_2,U)+\f{\pa\ov h_f}{\pa y}(y_2,u_f)](\sigma)\, d\sigma},\quad \mbox{if}\ (x,y)\in \ov\omega^{f,t}_1,\\
\displaystyle\f{2\tau_{gf}\widehat\phi^f_{k-1}(\tau_0,1,\beta_0)}{a_1\ov g_f(u_f(\tau_0))}\ e^{-\int_{\tau_0}^t[\ov\lambda(y_3,U)+\f{\pa\ov h_f}{\pa y}(y_3,u_f)](\sigma)\,d\sigma},\ \mbox{if}\ (x,y)\in \ov\omega^{f,t}_2,\\
 0,\quad \mbox{else}.
\end{array}
\right.
\ee
Since the dynamic in Phase 2 amounts to pure transport equations, we define $\widehat\phi^f_k(t,x,y)$, $k=1,\cdots,N$ by
\be \label{phase2}
\widehat\phi^f_k(t,x,y):=\left\{
\begin{array}{l}
\widehat\phi^f_{k0}(x-\widehat g_ft,y),\quad \mbox{if}\ (x,y)\in [\widehat g_ft,1]\times[0,1],\\
\displaystyle\f{a_1\ov g_f(t-\f{x}{\widehat g_f})}{\tau_{gf}}\ \ov\phi^f_k(t-\f{x}{\widehat g_f},1,y),\quad\mbox{if}\ (x,y)\in [0,\widehat g_ft]\times[0,1].
\end{array}
\right.
\ee
Let us recall the definition of the four subsets $\widetilde\omega^{f,t}_1$, $\widetilde\omega^{f,t}_2$, $\widetilde\omega^{f,t}_3$ and $\widetilde\omega^{f,t}_4$ of $[0,1]^2$,
the definition of the characteristics $\xi_5, \xi_6, \xi_9$ and also the definition (\ref{x5}) of $(x_0,y_0)$, (\ref{x6}) of $(t_0,\alpha_0)$ and (\ref{x9}) of $(\tau_0,\beta_0)$.
For $k=1$, we define $\widetilde\phi^f_1(t,x,y)$ by
\be \label{for11}
\widetilde\phi^f_1(t,x,y):=\left\{
\begin{array}{l}
\widetilde\phi^f_{10}(x_0,y_0)e^{-\int_0^t[\widetilde\lambda(y_5,U)
+\f{\pa\widetilde h_f}{\pa y}(y_5,u_f)](\sigma)\, d\sigma},\quad\mbox{if}\ (x,y)\in \widetilde\omega^{f,t}_1,\\
\ov\phi^f_1(t_0,\displaystyle\f{a_2}{a_1}\alpha_0, 1)e^{-\int_{t_0}^t[\widetilde\lambda(y_6,U)
+\f{\pa\widetilde h_f}{\pa y}(y_6,u_f)](\sigma)\, d\sigma},\quad\mbox{if}\ (x,y)\in \widetilde\omega^{f,t}_2,\\
0,\quad \mbox{else}.
\end{array}
\right.
\ee
For $k=2,\cdots,N$, we define $\widetilde\phi^f_k(t,x,y)$ by
\be\label{so3}
\widetilde\phi^f_k(t,x,y):=\left\{
\begin{array}{l}
\widetilde\phi^f_{k0}(x_0,y_0)e^{-\int_0^t[\widetilde\lambda(y_5,U)
+\f{\pa\widetilde h_f}{\pa y}(y_5,u_f)](\sigma)\, d\sigma},\quad\mbox{if}\ (x,y)\in \widetilde\omega^{f,t}_1,\\
\ov\phi^f_k(t_0,\displaystyle\f{a_2}{a_1}\alpha_0, 1)e^{-\int_{t_0}^t[\widetilde\lambda(y_6,U)
+\f{\pa\widetilde h_f}{\pa y}(y_6,u_f)](\sigma)\, d\sigma},\quad\mbox{if}\ (x,y)\in \widetilde\omega^{f,t}_2,\\
\widetilde\phi^f_{k-1}(\tau_0, 1,\beta_0)e^{-\int_{\tau_0}^t[\widetilde \lambda(y_9,U)+\f{\pa\widetilde h_f}{\pa y}(y_9,u_f)](\sigma)\,d\sigma},\quad\mbox{if}\ (x,y)\in \widetilde\omega^{f,t}_3,\\
0,\quad \mbox{else}.
\end{array}
\right.
\ee
Next we prove that the vector function $\vec\phi_f$ defined by (\ref{for1})-(\ref{so3})
is a weak solution to Cauchy problem (\ref{eq})-(\ref{bcy1}) for $t\in[0,\delta]$. 
To that end, we first prove that $\vec\phi_f$ defined by (\ref{for1})-(\ref{so3}) satisfies
equality \eqref{intequality} of Definition \ref{weaksol}, then we prove that $\vec\phi_f\in C^0([0,\delta];L^1((0,1)^2))$.
Let $\tau\in[0,\delta]$. For any vector function $\vec{\varphi}\in
C^1([0,\tau]\times[0,1]^2)$, $\vec{\varphi}:=(\ov\varphi_1,\cdots,\ov\varphi_N,\widehat\varphi_1,\cdots,\widehat\varphi_N,\\
\widetilde\varphi_1,\cdots,\widetilde\varphi_N)^T$ such that \eqref{varphi}-\eqref{varphi0} hold, by definition (\ref{for1})-(\ref{so3}) of $\vec\phi_f$, we have
\be \label{ai}
\int_0^{\tau}\int_0^1\int_0^1\vec{\phi}_f(t,x,y)\cdot(\vec{\varphi}_t+A_f\vec{\varphi}_x+
B_f\vec{\varphi}_y+C\vec{\varphi})dxdydt:=\sum_{i=1}^7 A_i.
\ee
In (\ref{ai}),
\begin{align*}
A_1\!\!:=&\!\!\sum_{k=1}^N\!\!\int_0^{\tau}\!\!\iint\limits_{\ov\omega^{f,t}_1}\ov\phi^f_{k0}(x_0,y_0)e^{-\int_0^t[\ov\lambda(y_2,U)+\f{\pa\ov h_f}{\pa y}(y_2,u_f)](\sigma)d\sigma}(\ov\varphi_{kt}\!\!+\!\!\ov g_f\ov\varphi_{kx}\!\!+\!\!\ov h_f\ov\varphi_{ky}\!\!-\!\!\ov\lambda\ov\varphi_k)
dxdydt,\\
A_2\!\!:=&\!\!\sum_{k=2}^N\!\!\int_0^{\tau}\!\!\!\!\iint\limits_{\ov\omega^{f,t}_2}\!\f{2\tau_{gf}\widehat\phi^f_{k-1}(\tau_0,\!1,\!\beta_0)}{a_1\ov g_f(u_f(\tau_0))}
e^{-\!\int_{\tau_0}^t[\ov\lambda(y_3,U)\!+\!\f{\pa\ov h_f}{\pa y}(y_3,u_f)](\sigma)d\sigma}\!(\ov\varphi_{kt}\!\!+\!\!\ov g_f\ov\varphi_{kx}\!\!+\!\!\ov h_f\ov\varphi_{ky}\!\!-\!\!\ov\lambda\ov\varphi_k)
dxdydt,\\
A_3\!\!:=&\!\!\sum_{k=1}^N\!\!\int_0^{\tau}\int_0^1\int_{\widehat g_ft}^1\widehat\phi^f_{k0}(x-\widehat g_ft,y)(\widehat\varphi_{kt}+\widehat g_f\widehat\varphi_{kx})dxdydt,\\
A_4\!\!:=&\!\!\sum_{k=1}^N\!\!\int_0^{\tau}\int_0^1\int_0^{\widehat g_ft}\f{a_1\ov g_f(u_f(t-\f{x}{\widehat g_f}))}{\tau_{gf}}\ \ov \phi^f_k(t-\f{x}{\widehat g_f},1,y)
(\widehat\varphi_{kt}+\widehat g_f\widehat\varphi_{kx})dxdydt,\\
A_5\!\!:=&\!\!\sum_{k=1}^N\!\!\int_0^{\tau}\!\!\iint\limits_{\widetilde\omega^{f,t}_1}\widetilde\phi^f_{k0}(x_0,y_0)e^{-\int_0^t[\widetilde\lambda(y_5,U)+\f{\pa\widetilde h_f}{\pa y}(y_5,u_f)](\sigma)d\sigma}(\widetilde\varphi_{kt}\!\!+\!\!\widetilde g_f\widetilde\varphi_{kx}\!\!+\!\!\widetilde h_f\widetilde\varphi_{ky}\!\!-\!\!\widetilde\lambda\widetilde\varphi_k)
dxdydt,\\
A_6\!\!:=&\!\!\sum_{k=1}^N\!\!\int_0^{\tau}\iint\limits_{\widetilde\omega^{f,t}_2}
\ov\phi^f_{k}(t_0,\f{a_2}{a_1}\alpha_0,1)e^{-\int_{t_0}^t[\widetilde\lambda(y_6,U)
+\f{\pa\widetilde h_f}{\pa y}(y_6,u_f)](\sigma)d\sigma}(\widetilde\varphi_{kt}\!\!+\!\!\widetilde g_f\widetilde\varphi_{kx}\!\!+\!\!\widetilde h_f\widetilde\varphi_{ky}\!\!-\!\!\widetilde\lambda\widetilde\varphi_k)dxdydt,\\
A_7\!\!:=&\!\!\sum_{k=2}^N\!\!\int_0^{\tau}\!\!\iint\limits_{\widetilde\omega^{f,t}_3}\widetilde\phi^f_{k-1}(\tau_0,1,\beta_0)e^{-\int_{\tau_0}^t[\widetilde \lambda(y_9,U)
+\f{\pa\widetilde h_f}{\pa y}(y_9,u_f)](\sigma)d\sigma}(\widetilde\varphi_{kt}\!\!+\!\!\widetilde g_f\widetilde\varphi_{kx}\!\!+\!\!\widetilde h_f\widetilde\varphi_{ky}\!\!-\!\!\widetilde\lambda\widetilde\varphi_k)dxdydt.
\end{align*}
Let us consider the first term $A_1$ as an instance. By the change of variable
$(x,y)\rightarrow(x_0,y_0)$ and noting \eqref{jy2} of Lemma \ref{jacobi} in Appendix 6.3, we have
\begin{align*}
&A_1\!\!=\!\!\sum_{k=1}^N\int_0^{\tau}\int_0^{\beta}\int_0^{\alpha}
\ov\phi^f_{k0}(x_0,y_0)e^{-\int_0^t\ov\lambda(y_2,U)(\sigma)d\sigma}
\Big(\ov\varphi_{kt}(t,\!x_2(t),\!y_2(t))\!\!+\!\!\ov g_f\ov\varphi_{kx}(t,\!x_2(t),\!y_2(t))\nonumber\\
&\qquad\qquad\qquad\qquad\qquad +\ov h_f\ov\varphi_{ky}(t,x_2(t),y_2(t))-\ov\lambda\ov\varphi_k(t,x_2(t),y_2(t))\Big)dx_0dy_0dt,
\end{align*}
where (see Fig 4 (a) and (b))
\be
\label{suppose}
\alpha:= 1-\int_0^t\ov g_f(u_f(\sigma))\,d\sigma :=f_1(t),\quad
\beta:= 1-\int_0^t\ov h_f(y_1,u_f)(\sigma)d\sigma:=g_1(t).
\ee
Clearly, $\alpha$ is a function of $\beta$, suppose that $\alpha=h(\beta)$.
After changing the order of integration, $A_1$ can be rewritten as 
\begin{align*}
A_1\!\!=\!\!\sum_{k=1}^N&\!\Big\{\!\!\int_0^{g_1(\tau)}\!\!\int_0^{f_1(\tau)}\!\!\int_0^{\tau}\!\!+\!\!\int_0^{g_1(\tau)}\!\!\int_{f_1(\tau)}^1\!\!\int_0^{f_1^{-1}(\alpha)}\!\!+\!\!
\int_{g_1(\tau)}^1\!\!\int_0^{f_1(\tau)}\!\!\int_0^{g_1^{-1}(\beta)}\!\!+\!\!\int_{g_1(\tau)}^1\!\!\int_{h(\beta)}^1\!\!\int_0^{f_1^{-1}(\alpha)}\\
&+\!\!\int_{f_1(\tau)}^1\!\!\int_{h^{-1}(\alpha)}^1\!\!\int_0^{g_1^{-1}(\beta)}\Big\}\ov\phi^f_{k0}(x_0,y_0)\f{d\left(e^{-\int_0^t\ov\lambda(y_2,U)(\sigma)d\sigma}
\ov\varphi_k(t,x_2(t),y_2(t))\right)}{dt}dtdx_0y_0\\
=\!\!\sum_{k=1}^N&\!\Big\{\!\!\!-\!\!\!\int_0^{g_1(\tau)}\!\!\!\!\int_0^{f_1(\tau)}\!\ov\phi^f_{k0}(x_0,y_0)\ov\varphi_k(0,x_0,y_0)dx_0dy_0
\!\!-\!\!\int_0^{g_1(\tau)}\!\!\!\!\int_{f_1(\tau)}^1\!\ov\phi^f_{k0}(x_0,y_0)\ov\varphi_k(0,x_0,y_0)dx_0dy_0\\
&-\!\!\int_{g_1(\tau)}^1\!\!\int_0^{f_1(\tau)}\ov\phi^f_{k0}(x_0,y_0)\ov\varphi_k(0,x_0,y_0)dx_0dy_0
\!\!-\!\!\int_{g_1(\tau)}^1\!\!\int_{h(\beta)}^1\ov\phi^f_{k0}(x_0,y_0)\ov\varphi_k(0,x_0,y_0)dx_0dy_0\\
&-\!\!\int_{f_1(\tau)}^1\!\!\int_{h^{-1}(\alpha)}^1\ov\phi^f_{k0}(x_0,y_0)\ov\varphi_k(0,x_0,y_0)dy_0dx_0\\
&+\!\!\int_0^{g_1(\tau)}\!\!\int_{f_1(\tau)}^1\!\!\ov\phi^f_{k0}(x_0,y_0)
e^{-\int_0^{f_1^{-1}(\alpha)}\ov\lambda(y_2,U)(\sigma)d\sigma}\ov\varphi_k(f_1^{-1}(\alpha),x_2(f_1^{-1}(\alpha)),y_2(f_1^{-1}(\alpha)))dx_0dy_0\\
&+\!\!\int_{g_1(\tau)}^1\!\!\int_0^{f_1(\tau)}\ov\phi^f_{k0}(x_0,y_0)
e^{-\int_0^{g_1^{-1}(\beta)}\ov\lambda(y_2,U)(\sigma)d\sigma}\ov\varphi_k(g_1^{-1}(\beta),x_2(g_1^{-1}(\beta)),y_2(g_1^{-1}(\beta)))dx_0dy_0\\
&+\!\!\int_{g_1(\tau)}^1\!\!\int_{h(\beta)}^1\ov\phi^f_{k0}(x_0,y_0)
e^{-\int_0^{f_1^{-1}(\alpha)}\ov\lambda(y_2,U)(\sigma)d\sigma}\ov\varphi_k(f_1^{-1}(\alpha),x_2(f_1^{-1}(\alpha)),y_2(f_1^{-1}(\alpha)))dx_0dy_0\\
&+\!\!\int_{f_1(\tau)}^1\!\!\int_{h^{-1}(\alpha)}^1\ov\phi^f_{k0}(x_0,y_0)
e^{-\int_0^{g_1^{-1}(\beta)}\ov\lambda(y_2,U)(\sigma)d\sigma}\ov\varphi_k(g_1^{-1}(\beta),x_2(g_1^{-1}(\beta)),y_2(g_1^{-1}(\beta)))dx_0dy_0
\Big\}.
\end{align*}
Changing the order of integration again (see Fig 4 (c)), we obtain
\be \label{good}
-\!\!\int_{f_1(\tau)}^1\!\!\int_{h^{-1}(\alpha)}^1\!\!\ov\phi^f_{k0}(x_0,y_0)\ov\varphi_k(0,x_0,y_0)dy_0dx_0
\!\!=\!\!-\!\!\int_{g_1(\tau)}^1\!\!\int_{f_1(\tau)}^{h(\beta)}\ov\phi^f_{k0}(x_0,y_0)\ov\varphi_k(0,x_0,y_0)dx_0dy_0.
\ee
By \eqref{good} and noting that $x_2(f_1^{-1}(\alpha))=y_2(g_1^{-1}(\beta))=1$, we get
\be \label{g}
A_1=-\sum_{k=1}^N\int_0^1\int_0^1\ov\phi^f_{k0}(x_0,y_0)\ov\varphi_k(0,x_0,y_0)dx_0dy_0.
\ee
Similar to $A_1$, we can prove that 
\begin{align}
&A_2=-\sum_{k=2}^N\int_0^{\tau}\int_0^1\f{2\tau_{gf}}{a_1}\ \widehat\phi^f_{k-1}(\tau_0,1,\beta_0)\ov\varphi_k(\tau_0,0,\beta_0)d\beta_0 d\tau_0,\\
&A_3=-\sum_{k=1}^N\int_0^1\int_0^1\widehat\phi^f_{k0}(x_0,y_0)\widehat\varphi_k(0,x_0,y_0)dx_0dy_0,\\
&A_4=-\sum_{k=1}^N\int_0^{\tau}\int_0^1\f{a_1\widehat g_f\ov g_f(u_f(\tau_0))}{\tau_{gf}}\ \ov\phi^f_k(\tau_0,1,\beta_0)\widehat\phi_k(\tau_0,0,\beta_0)d\beta_0 d\tau_0,\\
&A_5=-\sum_{k=1}^N\int_0^1\int_0^1\widetilde\phi^f_{k0}(x_0,y_0)\widetilde\varphi_k(0,x_0,y_0)dx_0dy_0,\\
&A_6=-\sum_{k=1}^N\int_0^{\tau}\int_0^{\f{a_1}{a_2}}\widetilde h_f(0,u_f(t_0))\ov\phi^f_k(t_0,\f{a_1}{a_2}\alpha_0,1)\widetilde\varphi_k(t_0,\alpha_0,0)d\alpha_0 dt_0,\\
&\label{A}
A_7=-\sum_{k=2}^N\int_0^{\tau}\int_0^1 \widetilde g_f\widetilde\phi^f_{k-1}(\tau_0,1,\beta_0)\widetilde\varphi_k(\tau_0,0,\beta_0)d\beta_0 d\tau_0.
\end{align}

\begin{figure}[htbp!]
\subfigure[]{
\label{f} 
\begin{minipage}[b]{0.32\textwidth}
\centering
\includegraphics[width=1.6in]{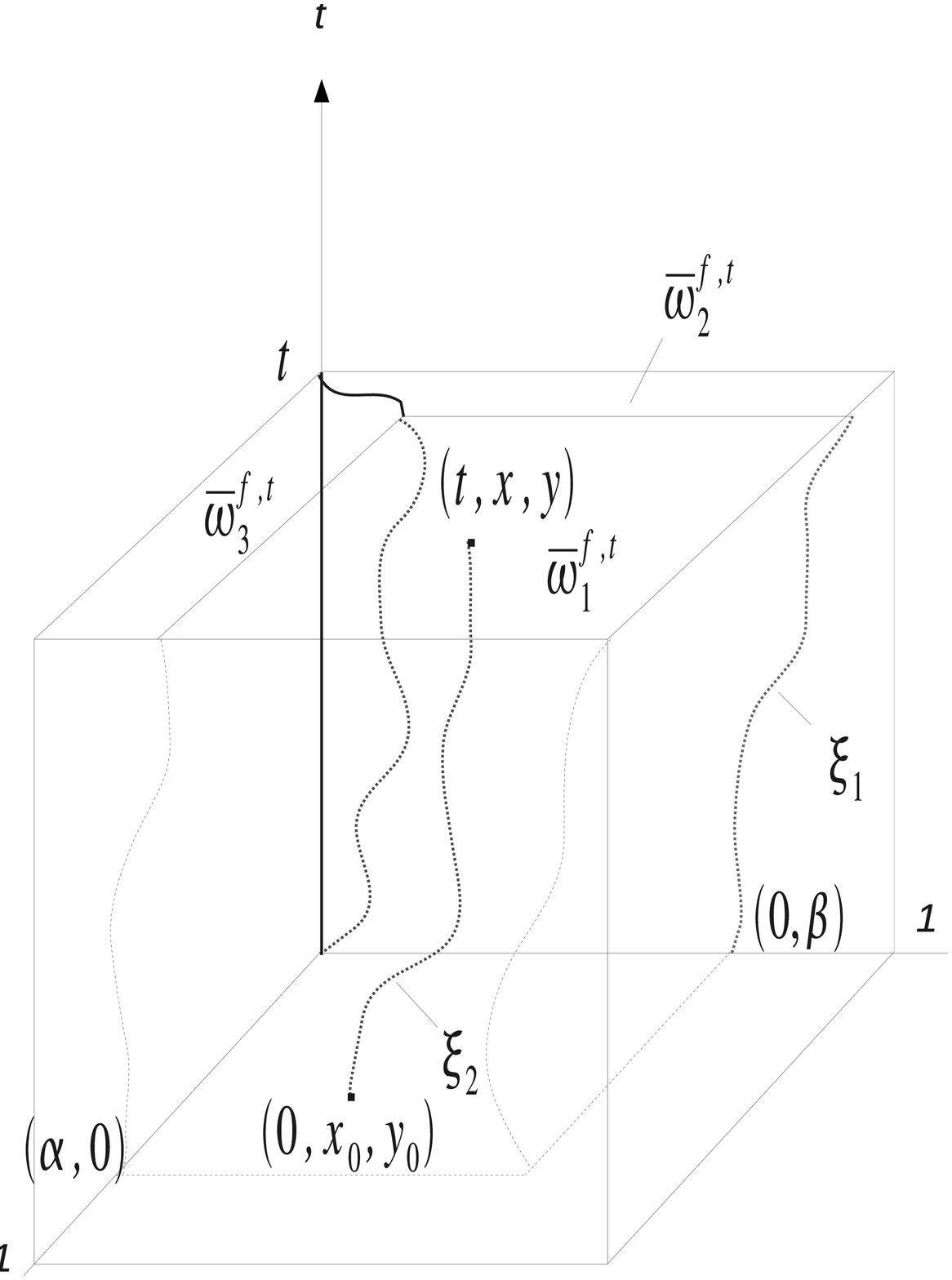}
\end{minipage}}%
\subfigure[]{
\begin{minipage}[b]{0.32\textwidth}
\centering
\includegraphics[width=1.6in]{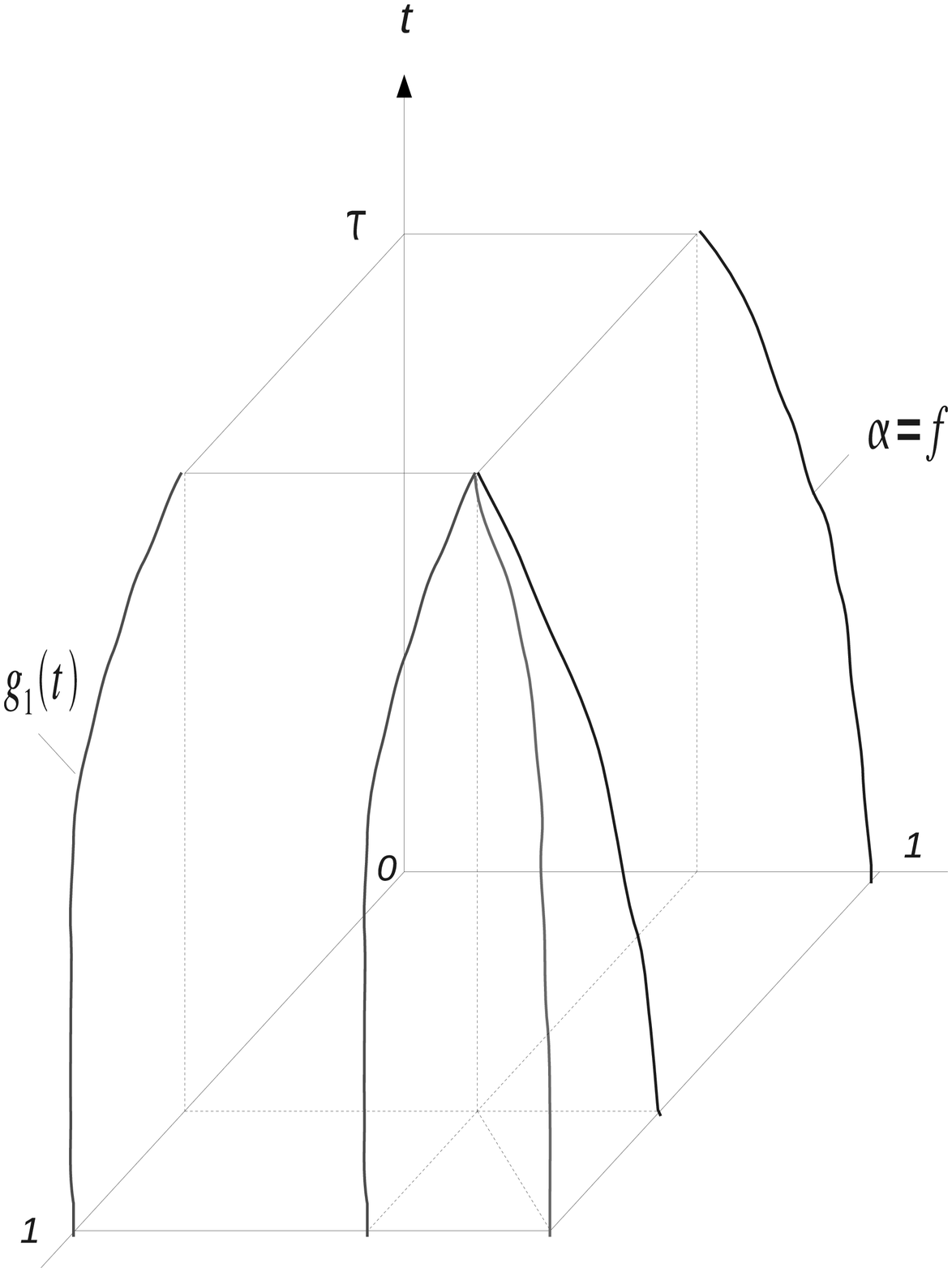}
\end{minipage}}%
\subfigure[]{
\label{h} 
\begin{minipage}[b]{0.35\textwidth}
\centering
\includegraphics[width=1.6in]{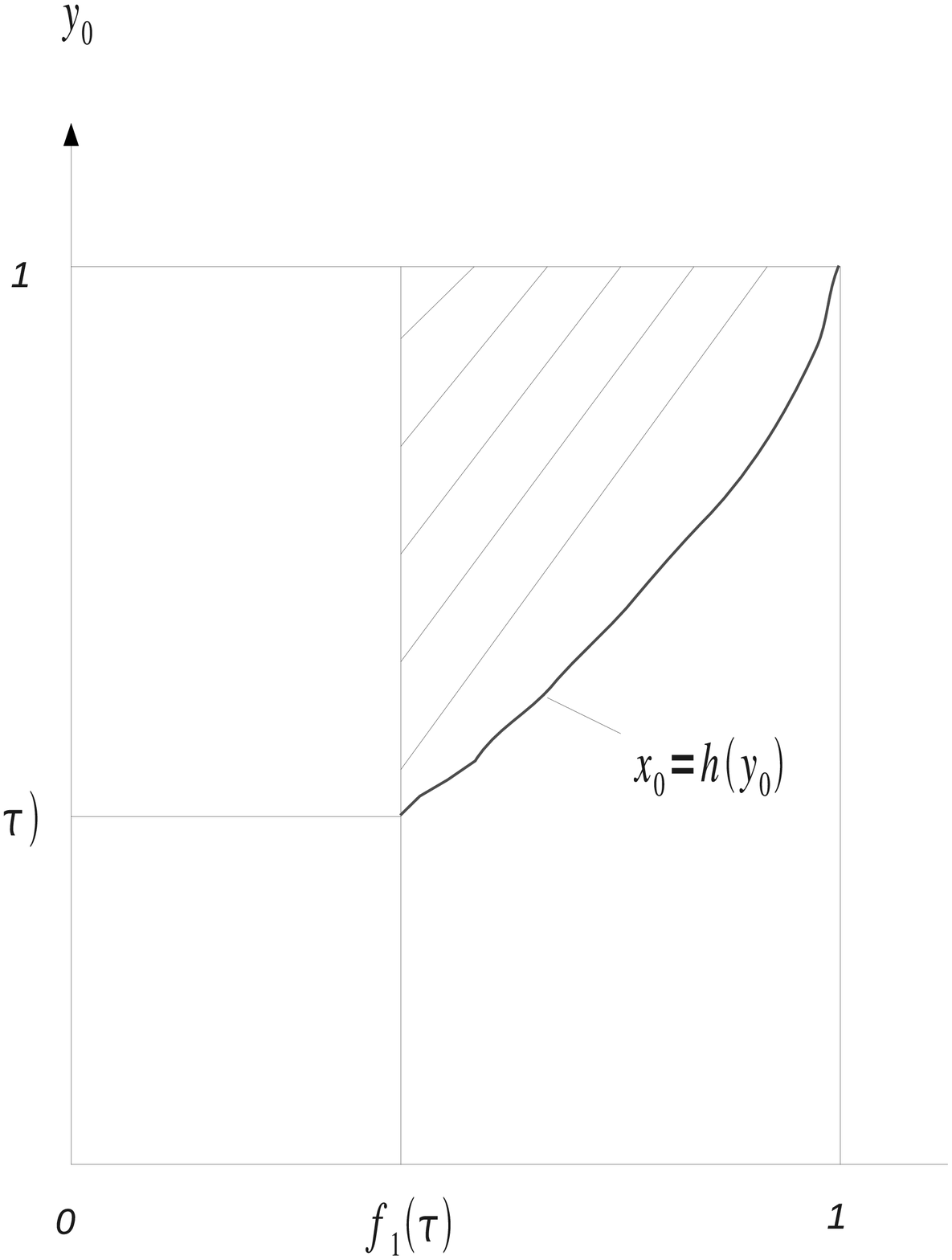}
\end{minipage}}%
\caption{(a) For any fixed $t\in[0,\tau]$, the characteristics $\xi_1, \xi_2$ and the definition of $\alpha, \beta$;
(b) The illustration of functions $\alpha=f_1(t)$ and $\beta=g_1(t)$, $t\in[0,\tau]$;
(c) For changing the order of integration.}
\label{Fig 4} 
\end{figure}

By (\ref{g})-(\ref{A}), we have proven that the vector function $\vec\phi_f$ defined by (\ref{for1})-(\ref{so3}) satisfies  \eqref{intequality}.
Next we prove that $\vec\phi_f\in C^0([0,\delta];L^1((0,1)^2))$. Moreover, we can prove that $\vec\phi_f$ even belongs to $C^0([0,\delta];L^p((0,1)^2))$, for all $p\in[1,\infty)$.

\begin{lem}\label{ex-sol}
The weak solution $\vec\phi_f$ to Cauchy problem \eqref{eq}-\eqref{bcy1}
belongs to $C^0([0,\delta];L^p((0,1)^2))$ for all $p\in[1,\infty)$.
\end{lem}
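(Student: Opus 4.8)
The plan is to work component by component on the explicit formulas \eqref{for1}--\eqref{so3} and show that each piece, viewed as a function of $(x,y)$, is uniformly bounded and depends continuously on $t$ in the $L^p$ norm. First I would observe that the exponential factors in \eqref{for1}--\eqref{so3} are uniformly bounded on $[0,\delta]\times[0,1]^2$: indeed $\ov\lambda,\widetilde\lambda,\pa\ov h_f/\pa y,\pa\widetilde h_f/\pa y$ are all bounded by $K_1$ on the relevant compact sets $Q_i$ (since $\vec M\in\Omega_{\delta,K}$ keeps all arguments in those sets), so each exponential lies in $[e^{-2K_1\delta},e^{2K_1\delta}]$. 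Consequently each branch of $\ov\phi^f_k,\widehat\phi^f_k,\widetilde\phi^f_k$ is, up to a bounded factor, a composition of one of the $L^\infty$ initial data $\ov\phi^f_{k0},\widehat\phi^f_{k0},\widetilde\phi^f_{k0}$ (or, in the coupled branches, of $\ov\phi^f_k$ or $\widehat\phi^f_k$ on a lower-dimensional trace) with the characteristic-backtracking map $(x,y)\mapsto(x_0,y_0)$. Hence $\vec\phi_f(t,\cdot,\cdot)\in L^\infty((0,1)^2)$ uniformly in $t\in[0,\delta]$, and since $(0,1)^2$ has finite measure, $L^\infty\hookrightarrow L^p$ gives $\vec\phi_f(t,\cdot,\cdot)\in L^p((0,1)^2)$ with norm bounded uniformly in $t$, for every $p\in[1,\infty)$.

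Next I would establish the continuity in $t$. By the uniform $L^\infty$ bound just obtained, it suffices to prove $C^0([0,\delta];L^1)$ continuity: if $t_n\to t$ then $\vec\phi_f(t_n,\cdot,\cdot)\to\vec\phi_f(t,\cdot,\cdot)$ in $L^1$, and then interpolation $\|g\|_{L^p}\le\|g\|_{L^1}^{1/p}\|g\|_{L^\infty}^{1-1/p}$ upgrades this to $L^p$ convergence for all finite $p$. For the $L^1$ continuity I would argue branch by branch. The characteristic flows solving the ODEs defining $\xi_2,\xi_3,\xi_5,\xi_6,\xi_9$ (and the auxiliary $\xi_4,\xi_7,\xi_8,\xi_{10}$) depend continuously — indeed Lipschitz-continuously, by Gronwall — on the parameters and on $t$, because $\ov g_f(u_f(\cdot)),\ov h_f,\widetilde h_f$ are $C^1$ in their arguments with derivatives bounded by $K_1$; likewise the hitting times $\tau_0,t_0,\ov t_0$ and the interface points depend continuously on $(x,y,t)$ by the implicit function theorem, using $\ov g_f\ge K_2>0$ and the sign conditions $\widetilde h_f(0,u_f)>0$, $\widetilde h_f(1,u_f)<0$ from Theorem~\ref{thm-sol} to guarantee transversality. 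The only genuine $t$-dependence of the geometry is through the moving boundaries of the sets $\ov\omega^{f,t}_i$ and $\widetilde\omega^{f,t}_j$: these boundary curves ($x=\int_0^t\ov g_f(u_f(\sigma))d\sigma$, $y=\ov\eta(t,\cdot)$, $x=\widetilde g_ft$, $y=\widetilde\eta_{1,2}(t,\cdot)$) sweep out sets whose measure tends to $0$ as $t_n\to t$. Therefore, writing $\vec\phi_f(t_n)-\vec\phi_f(t)$, I split it into (i) the region where both times give the same branch, where I use the Lipschitz dependence of $(x_0,y_0)$ on $t$ together with continuity of translation in $L^1$ applied to the fixed $L^\infty$ data (approximating each datum by a continuous function and using the uniform bound to control the tail), and (ii) the symmetric-difference region between the $\omega$-sets at times $t_n$ and $t$, whose measure vanishes, so the contribution is $\le(\text{uniform }L^\infty\text{ bound})\times|\text{symmetric difference}|\to0$.

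The main obstacle is item (i): continuity of the map $t\mapsto(\text{datum})\circ(x_0(t,\cdot,\cdot),y_0(t,\cdot,\cdot))$ in $L^1$ when the datum is merely $L^\infty$ (or $L^1$), since precomposition with a moving diffeomorphism is not continuous on $L^\infty$ in the sup norm. The remedy is the standard density argument: approximate each of $\ov\phi^f_{k0},\widehat\phi^f_{k0},\widetilde\phi^f_{k0}$ in $L^1((0,1)^2)$ by continuous functions $\psi_\varepsilon$; for continuous $\psi_\varepsilon$ the composition with the continuously varying, volume-controlled change of variables (whose Jacobian is bounded above and below uniformly on $[0,\delta]$ by Lemma~\ref{jacobi}) is continuous in $t$ uniformly; and the error from replacing the datum by $\psi_\varepsilon$ is controlled in $L^1$ uniformly in $t$ by that same Jacobian bound. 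A subtlety worth flagging explicitly is the coupled branches \eqref{so1},\eqref{phase2},\eqref{for11},\eqref{so3}, where $\widetilde\phi^f_k$ or $\ov\phi^f_k$ is fed by the \emph{trace} of another component on $\{x=1\}$ or $\{y=1\}$ or $\{x=0\}$: here I would note that those traces are themselves, by the formulas already written, compositions of the $L^\infty$ initial data with characteristic maps restricted to a hyperplane, hence bounded measurable functions of the remaining variables and of $t$, and the same density/Jacobian argument applies one layer down. Proceeding through the finitely many cell cycles $k=1,\dots,N$ and the finitely many branches, and summing, yields $\vec\phi_f\in C^0([0,\delta];L^p((0,1)^2))$ for all $p\in[1,\infty)$.
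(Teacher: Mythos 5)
Your proposal is correct in substance and shares the paper's overall architecture (uniform $L^\infty$ bound from the explicit formulas, an approximation of the $L^\infty$ data by smooth functions, and an exceptional set of measure $O(|\tilde t-t|)$ handled by the uniform bound), but it organizes the hard step differently. You compare the explicit branch formulas \eqref{for1}--\eqref{so3} at the two times directly, which forces you to track how $(x_0,y_0)$, the hitting times $\tau_0,t_0$, and above all the \emph{trace} data $\widehat\phi^f_{k-1}(\cdot,1,\cdot)$, $\ov\phi^f_k(\cdot,\cdot,1)$ move with $t$, and then to run the density argument ``one layer down'' on those traces, recursively through the coupling. The paper avoids this entirely: it introduces the sets $\ov S$, $\widehat S$, $\widetilde S$ of points whose backward characteristic from time $\tilde t$ reaches time $t$ without leaving $[0,1]^2$, and on these sets uses the flow (semigroup) identity $\ov\phi^f_k(\tilde t,x,y)=\ov\phi^f_k(t,\tilde x,\tilde y)\,e^{-\int_t^{\tilde t}[\cdots]}$, so that the only comparison needed is between values of $\ov\phi^f_k(t,\cdot)$ at two points a distance $O(|\tilde t-t|)$ apart --- exactly the translation-continuity step your density argument handles, with no branch bookkeeping and no trace recursion. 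The complements of $\ov S,\widehat S,\widetilde S$ have measure $O(|\tilde t-t|)$ as in your item (ii). Your reduction to $L^1$ followed by interpolation is a harmless variant; the paper estimates in $L^p$ directly, obtaining \eqref{ovestimate1}, \eqref{ovestimate2}, \eqref{ovestimate3}. If you keep your route, the one point to make airtight is the trace comparison: the boundary values at $x=1$ (resp.\ $y=1$) are $L^\infty$ in the boundary variables only, so evaluating them at two nearby parameter points requires its own density/Jacobian argument, which you flag but should carry out; adopting the semigroup identity on $\ov S,\widehat S,\widetilde S$ makes that step unnecessary.
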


\begin{proof}
From the definition \eqref{for1}-\eqref{so3} of $\vec\phi_f$, we get easily that $\vec\phi_f\in L^{\infty}((0,\delta)\times(0,1)^2)$.
Next we prove that the vector function $\vec\phi_f$ belongs to $C^0([0,\delta];L^p((0,1)^2))$ for all 
$p\in[1,\infty)$, i.e., for every $\tilde t,t\in[0,\delta]$ with $\tilde t\geq t$ (the case that $\tilde t\leq t$ can be treated similarly),
we need to prove 
 \begin{equation*}  \|\vec\phi_f(\tilde t,\cdot)-\vec\phi_f(t,\cdot)\|_{L^p((0,1)^2)}\rightarrow 0,
    \quad \text{as} \quad |\tilde t-t|\rightarrow 0,\quad \forall p\in[1,\infty).
   \end{equation*} 
In order to do that, we estimate $\|\ov\phi^f_k(\tilde t,\cdot)-\ov\phi^f_k(t,\cdot)\|_{L^p((0,1)^2)}$,
$\|\widehat\phi^f_k(\tilde t,\cdot)-\widehat\phi^f_k(t,\cdot)\|_{L^p((0,1)^2)}$ and
$\|\widetilde\phi^f_k(\tilde t,\cdot)-\widetilde\phi^f_k(t,\cdot)\|_{L^p((0,1)^2)} (k=1,\cdots,N)$ separately.

Suppose that the characteristic passing through $(t,0,0)$ intersects time $\tilde t$ plane at $(\tilde t,q_1,q_2)$ (see Fig 5 (a)), we have
\begin{equation*}
q_1=\int_t^{\tilde t}\ov g_f(u_f(\sigma))d\sigma,\quad q_2=\int_t^{\tilde t}\ov h_f(y,u_f)(\sigma)d\sigma,
\end{equation*}
where $y(s), t\leq s\leq \tilde t$ satisfies 
\begin{equation*}
\displaystyle\f{dy}{ds}=\ov h_f(y,u_f)(s),\quad y(t)=0.
\end{equation*}
We get easily that 
\be \label{m}
q_1\leq C|\tilde t-t|,\quad q_2\leq C|\tilde t-t|.
\ee
Here and hereafter in this section, we denote by $C$ various 
constants which do not depend on $\tilde t$, $t$, $x$ or $y$.
Let $\ov S:=[q_1,1]\times[q_2,1]$ (see Fig 5 (a)), 
following our method to construct the solution $\ov\phi^f_k$, when $(x,y)\in \ov S$, we have
\begin{align*}
&|\ov\phi^f_k(\tilde t,x,y)-\ov\phi^f_k(t,x,y)|\\
&=\Big|\ov\phi^f_k(t,\tilde x,\tilde y)e^{-\int_t^{\tilde t}[\ov\lambda(\tilde y_2,U)+\f{\pa\ov h_f}{\pa y}(\tilde y_2,u_f)](\sigma)\,d\sigma}-\ov\phi^f_k(t,x,y)\Big|\\
&\leq  |\ov\phi^f_k(t,\tilde x,\tilde y)-\ov\phi^f_k(t,x,y)|
e^{-\int_t^{\tilde t}[\ov\lambda(\widetilde y_2,U)+\f{\pa\ov h_f}{\pa y}(\widetilde y_2,u_f)](\sigma)\,d\sigma}
+C|\ov\phi^f_k(t,x,y)||\tilde t-t|.
\end{align*}
Here $\widetilde\xi_2=(\widetilde x_2,\widetilde y_2)$ denotes the characteristic passing through $(\tilde t,x,y)$
that intersects time $t$ plane at
$(t,\tilde x,\tilde y)$. Hence $(\tilde x,\tilde y)$ is defined by
\begin{equation*}
\tilde x=x-\int_t^{\tilde t}\ov g_f(u_f(\sigma))d\sigma,\quad
\tilde y=y-\int_t^{\tilde t}\ov h_f(\widetilde y_2,u_f)(\sigma)d\sigma.
\end{equation*}
We have
\be\label{xy}
|\tilde x-x|\leq C|\tilde t-t|,\quad |\tilde y-y|\leq C|\tilde t-t|.
\ee
Since $\ov\phi^f_k\in{L^{\infty}((0,\delta)}\times(0,1)^2)$, for every $l>0$, there exists $C_l$ such that for every $t\in[0,\delta]$, there exists $\ov\phi^{fl}_k(t,\cdot)\in C^1([0,1]^2)$ satisfying 
\be\label{approximate1}
\|\ov\phi^{fl}_k(t,\cdot)-\ov\phi^f_k(t,\cdot)\|_{L^p((0,1)^2)}\leq \f{1}{l},\quad \|\ov\phi^{fl}_k(t,\cdot)\|_{C^1([0,1]^2)}\leq C_l.
\ee
Here and hereafter in this section, we denote by $C_l$ various constants that may depend on $l$ (the index of the corresponding approximating sequences $\ov\phi^{fl}_k(t,\cdot)$, $\widehat\phi^{fl}_k(t,\cdot)$ and $\widetilde\phi^{fl}_k(t,\cdot)$) but are independent of $0\leq t\leq \tilde t\leq \delta$.

Thus, we have
\begin{align}\label{oo}
&|\ov\phi^f_k(\tilde t,x,y)-\ov\phi^f_k(t,x,y)|\nonumber\\
\leq &C|\ov\phi^{fl}_k(t,\tilde x,\tilde y)-\ov\phi^{f}_k(t,\tilde x,\tilde y)|
+C|\ov\phi^{fl}_k(t,x,y)-\ov\phi^{f}_k(t,x,y)|\nonumber\\
&+C|\ov\phi^f_k(t,x,y)||\tilde t-t|+C_l|\tilde t-t|.
\end{align}
By \eqref{approximate1} and \eqref{oo}, we have
\be \label{estimate1}
\iint\limits_{\ov S}|\ov\phi^f_k(\tilde t,x,y)-\ov\phi^f_k(t,x,y)|^pdxdy\leq \f{C}{l^p}+C_l|\tilde t-t|^p.
\ee
Noting \eqref{m}, we have 
\be \label{estimate2}
\iint\limits_{[0,1]^2\backslash\ov S}|\ov\phi^f_k(\tilde t,x,y)-\ov\phi^f_k(t,x,y)|^pdxdy\leq C(q_1+q_2)\leq C|\tilde t-t|.
\ee
Combining \eqref{estimate1} with \eqref{estimate2}, we obtain
\be\label{ovestimate1}
\int_0^1\int_0^1|\ov\phi^f_k(\tilde t,x,y)-\ov\phi^f_k(t,x,y)|^pdxdy
\leq  \f{C}{l^p}
+C_l|\tilde t-t|^p+C|\tilde t-t|.
\ee
\begin{figure}[!htbp]
\subfigure[]{
\begin{minipage}[b]{0.32\textwidth}
\centering
\includegraphics[width=1.6in]{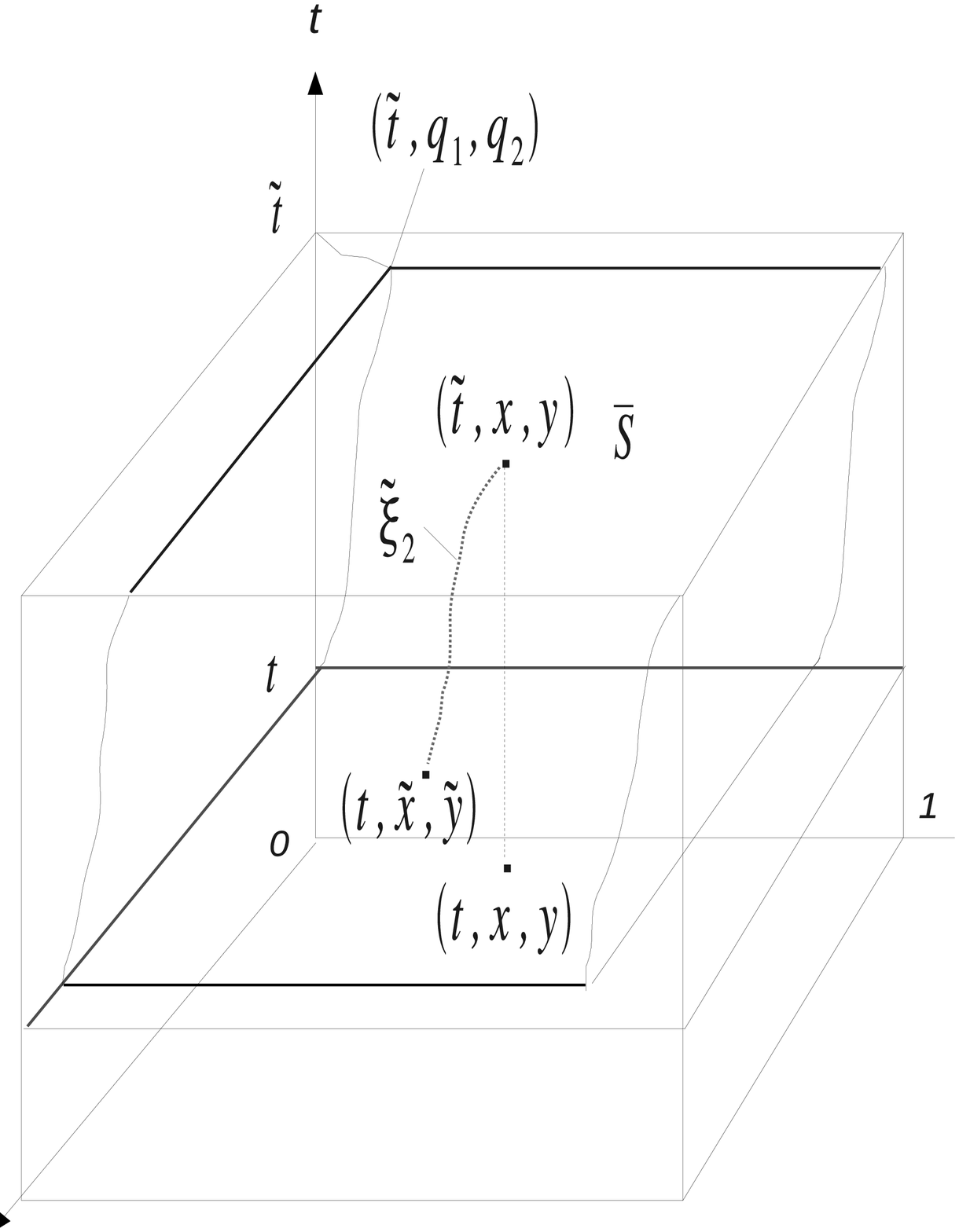}
\end{minipage}}%
\subfigure[]{
\begin{minipage}[t]{0.32\textwidth}
\centering
\includegraphics[width=1.6in]{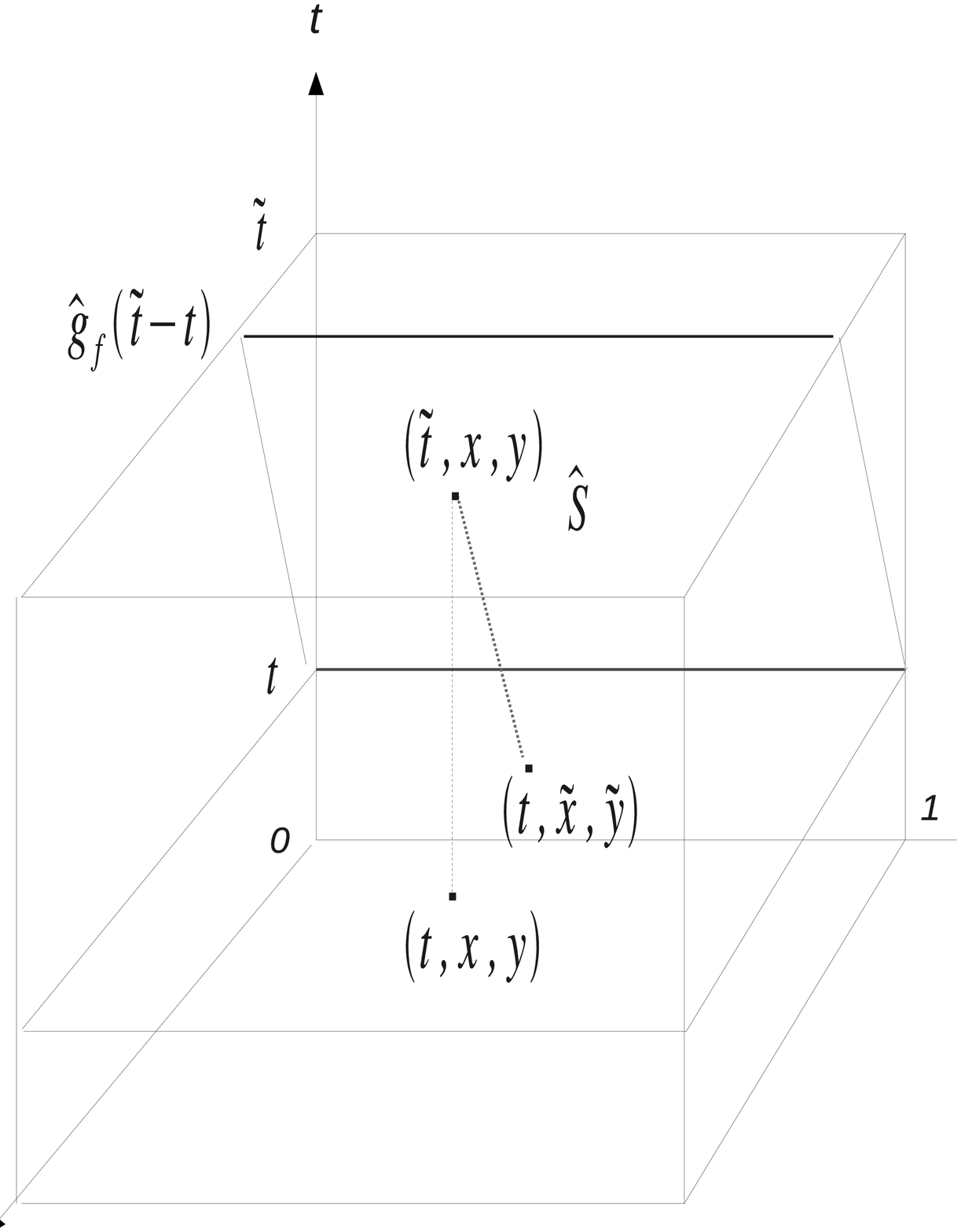}
\end{minipage}}%
\subfigure[]{
\begin{minipage}[t]{0.32\textwidth}
\centering
\includegraphics[width=1.6in]{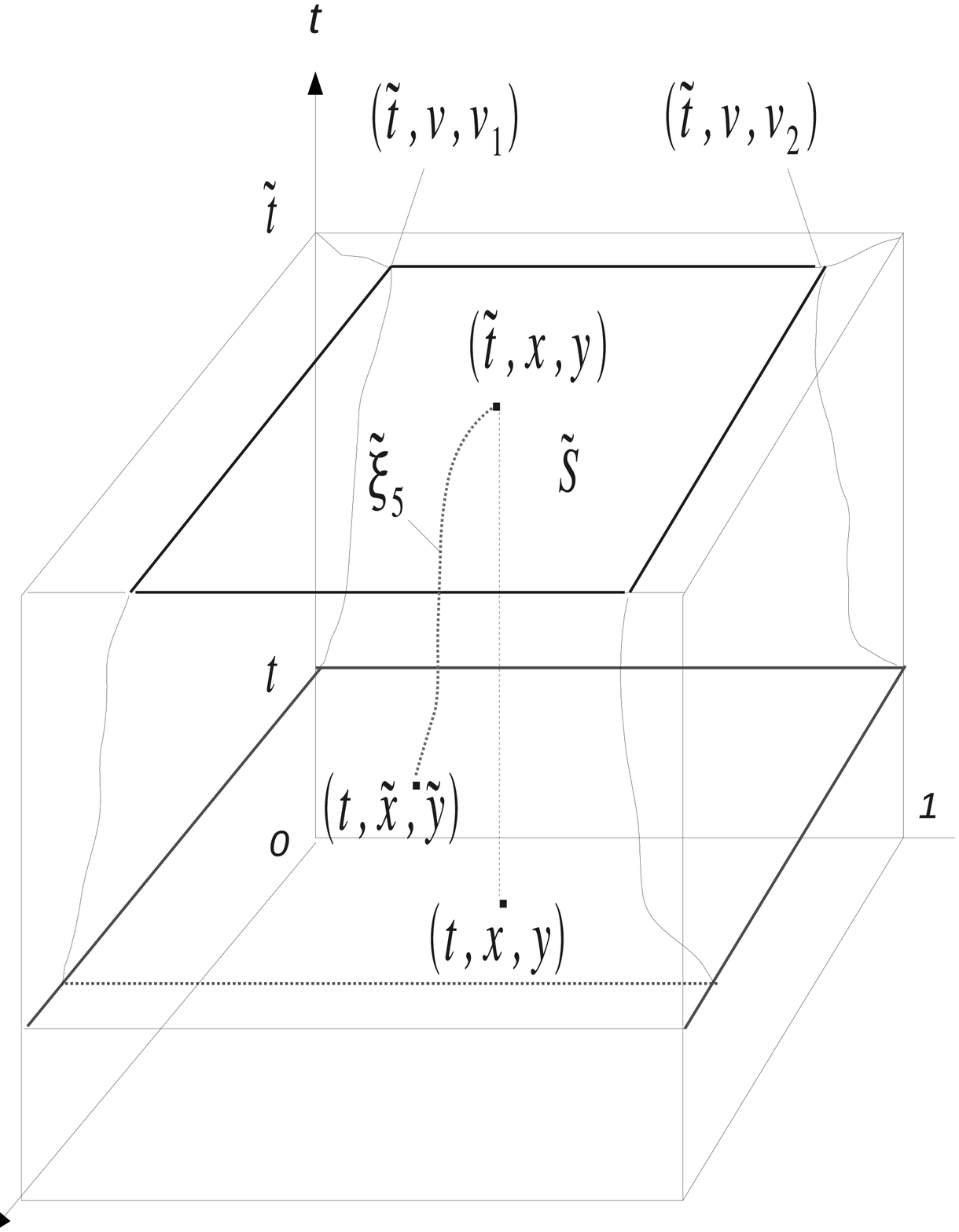}
\end{minipage}}
\caption{(a) For Phase 1, the definition of $\ov S$ and characteristic $\widetilde \xi_2$ which connects $(\tilde t,x,y)$ with $(t,\tilde x,\tilde y)$,
characteristic passing through $(t,0,0)$ intersects time $\tilde t$ plane at $(\tilde t,q_1,q_2)$;
(b) For Phase 2, the definition of $\widehat S$;
(c) For Phase 3, the definition of $\widetilde S$ and characteristic $\widetilde \xi_5$ which connects $(\tilde t,x,y)$ with $(t,\tilde x,\tilde y)$, characteristic 
passing through $(t,0,0)$ intersects time $\tilde t$ plane at $(\tilde t,v,v_1)$ and characteristic 
passing through $(t,0,1)$ intersects time $\tilde t$ plane at $(\tilde t,v,v_2)$.}
\label{Fig 6} 
\end{figure}

Next we estimate $\|\widehat\phi^f_k(\tilde t,\cdot)-\widehat\phi^f_k(t,\cdot)\|_{L^p((0,1)^2)}$. For Phase 2, the characteristic passing through
$(t,0,0)$ intersects time $\tilde t$ plane at $(\tilde t,\widehat g_f(\tilde t-t),0)$.
Let $\widehat S:=[\widehat g_f(\tilde t-t),1]\times[0,1]$ (see Fig 5 (b)), when $(x,y)\in \widehat S$,
we have
\begin{equation*}
|\widehat\phi^f_k(\tilde t,x,y)-\widehat\phi^f_k(t,x,y)|\!\!=\!\!|\widehat\phi^f_k(t,\tilde x,\tilde y)-\widehat\phi^f_k(t,x,y)|
\!\!=\!\!|\widehat\phi^f_k(t,x-\widehat g_f(\tilde t-t),y)-\widehat\phi^f_k(t,x,y)|.
\end{equation*}
Since $\widehat\phi^f_k\in{L^{\infty}((0,\delta)}\times(0,1)^2)$, for every $l>0$, there exists $C_l$ such that for every $t\in[0,\delta]$, there exists $\widehat\phi^{fl}_k(t,\cdot)\in C^1([0,1]^2)$ satisfying 
\be\label{approximate2}
\|\widehat\phi^{fl}_k(t,\cdot)-\widehat\phi^f_k(t,\cdot)\|_{L^p((0,1)^2)}\leq \f{1}{l},\quad \|\widehat\phi^{fl}_k(t,\cdot)\|_{C^1([0,1]^2)}\leq C_l.
\ee
Similar to \eqref{estimate1}, we can prove that
\be \label{estimate3}
\iint\limits_{\widehat S}|\widehat\phi^f_k(\tilde t,x,y)-\widehat\phi^f_k(t,x,y)|^pdxdy\leq \f{C}{l^p}+C_l|\tilde t-t|^p.
\ee
It is easy to check that
\be \label{estimate4}
\quad \iint\limits_{[0,1]^2\backslash\widehat S}|\widehat\phi^f_k(\tilde t,x,y)-\widehat\phi^f_k(t,x,y)|^pdxdy\leq C|\tilde t-t|.
\ee
Combining \eqref{estimate3} with \eqref{estimate4}, we get
\be\label{ovestimate2}
\int_0^1\int_0^1|\widehat\phi^f_k(\tilde t,x,y)-\widehat\phi^f_k(t,x,y)|^pdxdy
\leq \f{C}{l^p} +C_l|\tilde t-t|^p+C|\tilde t-t|.
\ee

Finally, we estimate $\|\widetilde\phi^f_k(\tilde t,\cdot)-\widetilde\phi^f_k(t,\cdot)\|_{L^p((0,1)^2)}$. 
Suppose that the characteristic passing through $(t,0,0)$ intersects time $\tilde t$ plane at $(\tilde t,v,v_1)$, and 
the characteristic passing through $(t,0,1)$ intersects time $\tilde t$ plane at $(\tilde t,v,v_2)$ (see Fig 5 (c)).
Similar to \eqref{m}, we can prove that
\be \label{m2}
v\leq C|\tilde t-t|,\quad v_1\leq C|\tilde t-t|,\quad 1-v_2\leq C|\tilde t-t|.
\ee
Let $\widetilde S:=[v,1]\times[v_1,v_2]$ (see Fig 5 (c)), when $(x,y)\in \widetilde S$,
we have 
\begin{equation*}
|\widetilde\phi^f_k(\tilde t,x,y)-\widetilde\phi^f_k(t,x,y)|=|\widetilde\phi^f_k(t,\tilde x,\tilde y)
e^{-\int_t^{\tilde t}[\widetilde\lambda(\widetilde y_5,U)+\f{\pa\widetilde h_f}{\pa y}(\widetilde y_5,u_f)](\sigma)d\sigma}-\widetilde\phi^f_k(t,x,y)|.
\end{equation*}
Here $\widetilde y_5=(\widetilde x_5,\widetilde y_5)$ denotes the characteristic passing through $(\tilde t,x,y)$ that intersects
time $t$ plane at $(t,\tilde x,\tilde y)$. Similar to \eqref{xy}, we can prove that
\begin{equation*}
|\tilde x-x|\leq C|\tilde t-t|,\quad |\tilde y-y|\leq C|\tilde t-t|.
\end{equation*}
Since $\widetilde\phi^f_k\in{L^{\infty}((0,\delta)}\times(0,1)^2)$, for every $l>0$, there exists $C_l$ such that for every $t\in[0,\delta]$, there exists $\widetilde\phi^{fl}_k(t,\cdot)\in C^1([0,1]^2)$ satisfying 
\be\label{approximate3}
\|\widetilde\phi^{fl}_k(t,\cdot)-\widetilde\phi^f_k(t,\cdot)\|_{L^p((0,1)^2)}\leq \f{1}{l},\quad \|\widetilde\phi^{fl}_k(t,\cdot)\|_{C^1([0,1]^2)}\leq C_l.
\ee
Similar to \eqref{estimate1}, we can prove that
\be \label{estimate5}
\iint\limits_{\widetilde S}|\widetilde\phi^f_k(\tilde t,x,y)-\widetilde\phi^f_k(t,x,y)|^pdxdy\leq \f{C}{l^p}+C_l|\tilde t-t|^p.
\ee
Noting \eqref{m2}, we have 
\be \label{estimate6}
\iint\limits_{[0,1]^2\backslash\widetilde S}|\widetilde\phi^f_k(\tilde t,x,y)-\widetilde\phi^f_k(t,x,y)|^pdxdy\leq C(v+v_1+1-v_2)\leq C|\tilde t-t|.
\ee
Combining \eqref{estimate5} with \eqref{estimate6}, we get
\be\label{ovestimate3}
\int_0^1\int_0^1|\widetilde\phi^f_k(\tilde t,x,y)-\widetilde\phi^f_k(t,x,y)|^pdxdy
\leq \f{C}{l^p} +C_l|\tilde t-t|^p+C|\tilde t-t|.
\ee
Therefore letting $l$ be large enough and then $|\tilde t-t|$ be small enough, the right hand side of \eqref{ovestimate1}, \eqref{ovestimate2}
and \eqref{ovestimate3} can be arbitrarily small.
Above all, we prove that the vector function $\vec\phi_f$ belongs to $C^0([0,\delta];L^p((0,1)^2))$ for all $p\in[1,\infty)$.
\end{proof}

Let us recall Definition \ref{weaksol} of a weak solution, we prove that the vector function $\vec\phi_f$ 
defined by \eqref{for1}-\eqref{so3} is indeed a weak solution to Cauchy problem \eqref{eq}-\eqref{bcy1}.

\section{Uniqueness of the solution}
In this section, we prove the uniqueness of the solution.
\begin{lem}\label{uni}
The weak solution to Cauchy problem \eqref{eq}-\eqref{bcy1} is unique.
\end{lem}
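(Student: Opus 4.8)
The plan is to reduce the uniqueness of the weak solution to a purely \emph{linear} uniqueness statement by means of the contraction mapping $\vec G$ of Lemma \ref{contraction}, and then to settle the linear statement by exploiting the triangular structure of the coupling conditions (\ref{bcx})--(\ref{bcy1}) together with the characteristic representation of Section 3.2. It is enough to work on the interval $[0,\delta]$ on which the local solution was built: if $\vec\phi_f$ and $\vec\psi_f$ are two weak solutions sharing the same initial and boundary data, then once they are shown to agree on $[0,\delta]$ the argument restarts from $t=\delta$ with identical Cauchy data, and the successive steps can be taken of uniform length because every weak solution satisfies $L^\infty$, hence $L^1$, hence maturity bounds that depend only on the data and on $T$ — this propagation being precisely what Section 5 carries out.

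\textbf{Reduction to the linear problem.} Let $\vec\phi_f$ be a weak solution and $\vec M:=\vec M^{\phi}=(M_1^{\phi},\dots,M_n^{\phi})$ its maturity. Since $\vec\phi_f\in C^0([0,\delta];L^1((0,1)^2))$ and each $M_f$ is a fixed bounded linear functional of $\vec\phi_f(t)$, the map $t\mapsto\vec M(t)$ is continuous, and since $|M_f(0)|\le K/2^N$ by the choice (\ref{K}) of $K$, one has $\vec M\in\Omega_{\delta,K}$ after possibly shrinking $\delta$ (which leaves the contraction property intact). Freezing the velocities and loss rates along $\vec M$ turns (\ref{eq})--(\ref{bcy1}) into the \emph{linear} Cauchy problem with prescribed $(t,y)$-dependent coefficients treated in Section 3.1, of which $\vec\phi_f$ is then tautologically a weak solution. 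Granting the linear uniqueness stated below, $\vec\phi_f$ must coincide with the function defined by (\ref{for1})--(\ref{so3}), whose maturity equals $\vec G(\vec M)$ by the very definition (\ref{GGG}); hence $\vec M=\vec G(\vec M)$. Applying this to both $\vec\phi_f$ and $\vec\psi_f$, their maturities are fixed points of $\vec G$ in $\Omega_{\delta,K}$, so they coincide on $[0,\delta]$ by Lemma \ref{contraction} and the Banach fixed point theorem; with the coefficients now frozen along this common maturity, a second application of the linear uniqueness yields $\vec\phi_f=\vec\psi_f$ on $[0,\delta]$.

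\textbf{Uniqueness for the linear problem.} It remains to show that, when the coefficients are prescribed $C^1$ functions with $\ov g_f,\widehat g_f,\widetilde g_f>0$, $\ov h_f>0$, $\widetilde h_f>0$ at $y=0$ and $\widetilde h_f<0$ at $y=1$, the linear system (\ref{eq})--(\ref{bcy1}) has at most one weak solution in $C^0([0,\delta];L^1)$. Once the maturity is frozen the $n$ follicles decouple, so we may fix $f$; and the couplings between the components of $\vec\phi_f$ are triangular: $\ov\phi^f_1$ has homogeneous inflow data at $x=0$ and $y=0$; $\widehat\phi^f_k$ is fed at $x=0$ only by $\ov\phi^f_k$ (and has no $y$-flux); $\ov\phi^f_k$ for $k\ge2$ only by $\widehat\phi^f_{k-1}$; $\widetilde\phi^f_k$ only by $\ov\phi^f_k$ at $y=0$ and $\widetilde\phi^f_{k-1}$ at $x=0$ — there is no feedback loop. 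Ordering the $3N$ components as $\ov\phi^f_1,\widehat\phi^f_1,\ov\phi^f_2,\widehat\phi^f_2,\dots,\ov\phi^f_N,\widehat\phi^f_N,\widetilde\phi^f_1,\dots,\widetilde\phi^f_N$, one proves uniqueness one component at a time, each step reducing to uniqueness of the weak solution of a single scalar transport equation $\phi_t+(g\phi)_x+(h\phi)_y=-\lambda\phi$ on $[0,\delta]\times[0,1]^2$ with already-determined initial and inflow data. For such a scalar equation I would proceed by duality: with $w:=\phi^{(1)}-\phi^{(2)}$ (vanishing data) the weak formulation gives $\int_0^{\tau}\!\!\int_0^1\!\!\int_0^1 w\,(\varphi_t+g\varphi_x+h\varphi_y-\lambda\varphi)\,dx\,dy\,dt=0$ for every admissible $\varphi$; given an arbitrary $\chi\in C_c^{\infty}((0,\tau)\times(0,1)^2)$ one solves the backward adjoint equation $\varphi_t+g\varphi_x+h\varphi_y-\lambda\varphi=\chi$, $\varphi(\tau,\cdot)=0$, along characteristics (the coefficients being $C^1$; mollify if needed) to obtain an admissible test function, whence $\int_0^{\tau}\!\!\int_0^1\!\!\int_0^1 w\chi=0$ and $w\equiv0$. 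Equivalently, and more in the spirit of Section 3.2, one shows directly that a weak solution must coincide a.e.\ with the characteristic formula (\ref{for1})--(\ref{so3}): testing (\ref{intequality}) against functions concentrating along the characteristic through a Lebesgue point of $\phi(t,\cdot)$ and passing to the limit recovers that formula, the backward flow being a bi-Lipschitz change of variables on each region $\ov\omega^{f,t}_i$, $\widetilde\omega^{f,t}_i$ (Lemma \ref{jacobi}).

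\textbf{Main obstacle.} The delicate part is Phase 3. There the $y$-velocity $\widetilde h_f$ vanishes at an interior maturity level, so, unlike in Phase 1, a backward characteristic from an interior point need not reach $y=0$ — where $\widetilde\phi^f_k$ carries the trace of $\ov\phi^f_k$ at $y=1$ — within the available time; it may remain trapped between the curves $\widetilde\eta_1$ and $\widetilde\eta_2$ and be traced back only to the initial slice. The admissible adjoint test functions, equivalently the region decomposition tracing a point back to its datum, must therefore distinguish all of these cases (boundary $x=0$, boundary $y=0$, boundary $y=1$, or the initial slice) and respect the coupled traces of $\ov\phi^f_k$ at $y=1$ and of $\widetilde\phi^f_{k-1}$ at $x=1$ that feed $\widetilde\phi^f_k$. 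This is exactly what the four-region splitting $\widetilde\omega^{f,t}_1,\dots,\widetilde\omega^{f,t}_4$ and the curves $\widetilde\eta_1,\widetilde\eta_2$ of Section 3.1 are designed for, and the sign hypotheses $\widetilde h_f>0$ at $y=0$, $\widetilde h_f<0$ at $y=1$ guarantee that every characteristic is traced back — in time $\le\delta$ or down to $t=0$ — to a point where the datum is already known from the induction, so no circularity arises. The remaining estimates are routine and parallel those of Lemma \ref{ex-sol}.
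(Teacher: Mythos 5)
Your proposal is correct and follows essentially the same route as the paper: identify any weak solution, via duality with the backward adjoint linear problem, with the characteristic representation driven by its own (frozen) maturity, conclude that this maturity is a fixed point of $\vec G$ and hence coincides with the one from Lemma \ref{contraction}, and then read off equality of the solutions. The only cosmetic difference is that you package the linear step as a component-by-component triangular induction with a source-term adjoint $\varphi_t+g\varphi_x+h\varphi_y-\lambda\varphi=\chi$, whereas the paper tests the time-$t$ identity \eqref{note} against the solution of the backward problem \eqref{back} with terminal data $\vec\varphi_0$ and derives the representation formulas \eqref{hat1}--\eqref{solution24} directly.
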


\begin{proof}
Assume that $\ov{\vec\phi}_f\!\!=\!(\ov{\ov\phi}^f_1,\cdots,\ov{\ov\phi}^f_N,\ov{\widehat\phi}^f_1,\cdots,\ov{\widehat\phi}^f_N,
\ov{\widetilde\phi}^f_1,\cdots,\ov{\widetilde\phi}^f_N)^T\!\!\in\!{C^0([0,\delta];L^1((0,1)^2))}$ is a weak solution to 
Cauchy problem \eqref{eq}-\eqref{bcy1}. Similar to Lemma 2.2 in \cite[Section 2.2]{Wang}, we can prove that for any fixed $t\in[0,\delta]$ and any $\vec{\Phi}(\tau,x,y):=(\ov\Phi_1,\cdots,\ov\Phi_N,\\\widehat\Phi_1,\cdots,\widehat\Phi_N,
\widetilde\Phi_1,\cdots,\widetilde\Phi_N)^T\in{C^1([0,t]\times[0,1]^2)}$ 
with 
\begin{align*}
 &\vec{\Phi}(\tau,1,y)= 0,\quad \forall (\tau,y)\in[0,t]\times[0,1],\nonumber\\
 &\ov\Phi_1(\tau,0,y)=\widetilde\Phi_1(\tau,0,y)=0,\quad \forall (\tau,y)\in[0,t]\times[0,1],\nonumber\\
 &\ov\Phi_k(\tau,x,0)=\widehat\Phi_k(\tau,x,0)=\ov\Phi_k(\tau,x,1)=\widehat\Phi_k(\tau,x,1)=0,\quad \forall (\tau,x)\in[0,t]\times[0,1],
 \end{align*}
one has
 \begin{align}\label{note}
 &\int_0^t\!\!\int_0^1\!\!\int_0^1 \ov{\vec\phi}_f(\tau,x,y)\cdot(\vec{\Phi}_{\tau}(\tau,x,y)
 \!\!+\!\!\ov A_f\vec{\Phi}_x(\tau,x,y)\!\!+\!\!\ov B_f\vec{\Phi}_y(\tau,x,y)\!\!+\!\ov C\vec{\Phi}(\tau,x,y)) dx dy d\tau
\\
  &\!\!+\!\!\int_0^1\!\!\int_0^1 \!\!\vec\phi_{f0}(x,y)\cdot\vec{\Phi}(0,x,y)dxdy\!\!+\!\!\sum_{k=1}^N\int_0^t\!\!\int_0^{\f{a_1}{a_2}}\widetilde h_f(0,\ov u_f(\tau))\ov{\ov\phi}^f_k(\tau,\f{a_2}{a_1}x,1)\widetilde\Phi_k(t,x,0)dxd\tau\nonumber\\
 &\!\!+\!\!\sum_{k=2}^N\int_0^t\!\!\int_0^1 \f{2\tau_{gf}}{a_1}\ \ov{\widehat\phi}^f_{k-1}(\tau,1,y)\ov\Phi_k(\tau,0,y)dyd\tau \!\!+\!\!\sum_{k=2}^N\int_0^t\!\!\int_0^1 \!\!\widetilde g_f\ov{\widetilde\phi}^f_{k-1}(\tau,1,y)\widetilde\Phi_k(\tau,0,y)dyd\tau\nonumber\\
    &\!\!+\sum_{k=1}^N \int_0^t\!\!\int_0^1 \f{a_1\widehat g_f\ov g_f(\ov u_f(\tau))}{\tau_{gf}}\ \ov{\ov\phi}^f_k(\tau,1,y)\widehat\Phi_k(\tau,0,y)dyd\tau =\int_0^1\int_0^1\ov{\vec\phi}_f(t,x,y)\cdot\vec\Phi(t,x,y)  \nonumber .
 \end{align}
In \eqref{note}, the velocity matrices $\ov A_f$, $\ov B_f$ and $\ov C$ are defined the same way as $A_f$, $B_f$ and $C$
but with $\ov M_f$ and $\ov M$ instead of $M_f$ and $M$,
where
\begin{align*}\ov M_f(t)=& \sum_{k=1}^N\int_0^1\int_0^1 a_1\gamma_s^2y
\ov{\ov\phi}^f_k(t,x,y)\,dx\,dy+\sum_{k=1}^N\int_0^1\int_0^1(a_2-a_1)\gamma^2_s y\ov{\widehat\phi}^f_k(t,x,y)\,dx\,dy\\
&+\sum_{k=1}^N\int_0^1\int_0^1a_2\gamma_0(\gamma_0 y +\gamma_s)\ov{\widetilde\phi}^f_k(t,x,y)\,dx\,dy ,
\end{align*}
and $\ov M(t)\!\!=\!\!\sum_{f=1}^{n}\ov M_f(t)$.
Let $\vec{\varphi}_0(x,y)\!\!=\!\!(\ov{\varphi}_{01},\cdots,\ov{\varphi}_{0N},\widehat\varphi_{01},\cdots,\widehat\varphi_{0N},
\widetilde{\varphi}_{01},\cdots,\widetilde{\varphi}_{0N})^T\!\in{C^1_0((0,1)^2)}$, and choose the test function as the solution
to the following backward linear Cauchy problem
\be \label{back}
\left\{
\begin{array}{l}
\vec{\Phi}_{\tau}+\ov A_f\vec{\Phi}_x+\ov B_f\vec{\Phi}_y=-\ov C\vec{\Phi},\quad 0\leq\tau\leq t,\quad (x,y)\in[0,1]^2,\\
\vec{\Phi}(t,x,y)=\vec{\varphi}_0(x,y),\quad (x,y)\in[0,1]^2,\\
\vec{\Phi}(\tau,1,y)=0,\quad \forall (\tau,y)\in[0,t]\times[0,1],\\
\ov\Phi_1(\tau,0,y)=\widetilde\Phi_1(\tau,0,y)=0,\quad \forall (\tau,y)\in[0,t]\times[0,1],\\
\ov\Phi_k(\tau,x,0)\!\!=\!\!\widehat\Phi_k(\tau,x,0)\!\!=\!\!\ov\Phi_k(\tau,x,1)\!\!=\!\!\widehat\Phi_k(\tau,x,1)\!\!=\!\!0,\forall (\tau,x)\in[0,t]\times[0,1].
\end{array}
\right.
\ee
For any fixed $t\in[0,\delta]$, we introduce three new subsets $\ov{\ov\omega}^{f,t}_1$, $\ov{\ov\omega}^{f,t}_2$ and $\ov{\ov\omega}^{f,t}_3$ of $[0,1]^2$
\begin{align*}
&\ov{\ov\omega}^{f,t}_1:=\Big\{(x,y)|\ \int_0^t\ov g_f(\ov u_f(\sigma))\, d\sigma\leq x\leq 1,\ \ov{\ov\eta}(t,\int_0^t\ov g_f(\ov u_f(\sigma))\, d\sigma)\leq y\leq 1\Big\},\\
&\ov{\ov\omega}^{f,t}_2:=\Big\{(x,y)|\ 0\leq x\leq \int_0^t \ov g_f(\ov u_f(\sigma))d\sigma,\ \ov{\ov\eta}(t,x)\leq y\leq 1\Big\},\\
&\ov{\ov\omega}^{f,t}_3:=[0,1]^2\backslash(\ov{\ov\omega}^{f,t}_1\cup\ov{\ov\omega}^{f,t}_2).
\end{align*}
Here $y=\ov{\ov\eta}(t,x)$ satisfies
\begin{equation*}
\displaystyle\f{d\ov{\ov\eta}}{ds}=\ov h_f(\ov{\ov\eta},\ov u_f)(s),\quad \ov{\ov\eta}(\ov\theta)=0,\quad \ov\theta\leq s\leq t,
\end{equation*}
with $\ov\theta$ defined by $x=\displaystyle\int_{\ov\theta}^t\ov g_f(\ov u_f(\sigma))d\sigma$.

For any fixed $t\in[0,\delta]$ and $(x,y)\in[0,1]^2$. If $(x,y)\in \ov{\ov\omega}^{f,t}_1$, we define $\ov\xi_2=(\ov x_2, \ov y_2)$ by
\begin{equation*} 
\displaystyle\f{d\ov x_2}{ds}=\ov g_f(\ov u_f(s)),\quad 
\displaystyle\f{d\ov y_2}{ds}=\ov h_f(\ov y_2,\ov u_f)(s),\quad \ov \xi_2(t)=(x,y).
\end{equation*}
Let us then define 
\be\label{barx0}
(\ov x_0,\ov y_0):=(\ov x_2(0),\ov y_2(0)).
\ee
If $(x,y)\in\ov{\ov\omega}^{f,t}_2$, we define $\ov\xi_3=(\ov x_3, \ov y_3)$ by
\begin{equation*} 
\displaystyle\f{d\ov x_3}{ds}=\ov g_f(\ov u_f(s)),\quad 
\displaystyle\f{d\ov y_3}{ds}=\ov h_f(\ov y_3,\ov u_f)(s),\quad \ov \xi_3(t)=(x,y).
\end{equation*}
There exists a unique $\ov \tau_0$ such that $\ov x_3(\ov \tau_0)=0$, so that we can define
\be\label{barbeta0}
 \ov\beta_0:=\ov y_3(\ov \tau_0).
\ee
By \eqref{note} and \eqref{back}, we obtain that
\begin{align}\label{ou}
&\int_0^1\int_0^1\ov{\vec\phi}_f(t,x,y)\cdot\vec\varphi^f_0(x,y)dxdy\\
=\!\!&\int_0^1\!\!\int_0^1 \!\!\vec\phi_{f0}(x,y)\cdot\vec{\Phi}(0,x,y)dxdy\!\!+\!\!\sum_{k=1}^N\!\!\int_0^t\!\!\int_0^{\f{a_1}{a_2}}\!\!\widetilde h_f(0,\ov u_f(\tau))\ov{\ov\phi}^f_k(\tau,\f{a_2}{a_1}x,1)\widetilde\Phi_k(t,x,0)dxd\tau\nonumber\\
 &+\!\!\sum_{k=2}^N\!\!\int_0^t\!\!\int_0^1 \f{2\tau_{gf}}{a_1}\ \ov{\widehat\phi}^f_{k-1}(\tau,1,y)\ov\Phi_k(\tau,0,y)dyd\tau\!\! +\!\!\sum_{k=2}^N\!\!\int_0^t\!\!\int_0^1\!\! \widetilde g_f\ov{\widetilde\phi}^f_{k-1}(\tau,1,y)\widetilde\Phi_k(\tau,0,y)dyd\tau\nonumber\\
    &+\sum_{k=1}^N \int_0^t\int_0^1 \f{a_1\widehat g_f\ov g_f(\ov u_f(\tau))}{\tau_{gf}}\ \ov{\ov\phi}^f_k(\tau,1,y)\widehat\Phi_k(\tau,0,y)dyd\tau.\nonumber
  \end{align}
Let us recall the definition of the characteristic $\ov\xi_2$ and also the definition \eqref{barx0} of $(\ov x_0,\ov y_0)$.
By solving the backward linear Cauchy problem \eqref{back} and noting \eqref{jy2} of Lemma \ref{jacobi} in Appendix 6.3, we have
\begin{align}\label{lastt}
&\int_0^1\int_0^1\vec\phi_{f0}(x,y)\cdot\vec\Phi(0,x,y)dxdy\nonumber\\
=&\sum_{k=1}^N\iint\limits_{\ov{\ov\omega}^{f,t}_1}
\ov\phi^f_{k0}(\ov x_0,\ov y_0)\ov\varphi_{0k}(x,y)e^{-\int_0^t[\ov\lambda(\ov y_2,\ov U)+\f{\pa\ov h_f}{\pa y}(\ov y_2,\ov u_f)](\sigma)\,d\sigma}dxdy\nonumber\\
&+\!\!\int_0^1\!\!\int_{\widehat g_ft}^1\!\!\widehat\phi^f_{k0}(x\!\!-\!\!\widehat g_ft,y)\widehat\varphi_{0k}(x,y)dxdy\!\!+\!\!
\sum_{k=1}^N\!\!\int_0^1\!\!\int_0^1\widetilde\phi^f_{k0}(\ov x_0,\ov y_0)\widetilde\Phi_k(0,\ov x_0,\ov y_0)d\ov x_0d\ov y_0.
\end{align}
As for the last term of \eqref{ou}, by solving the backward linear Cauchy problem \eqref{back}, we have
\begin{align*}
&\sum_{k=1}^N \int_0^t\int_0^1 \f{a_1\widehat g_f\ov g_f(\ov u_f(\tau))}{\tau_{gf}}\ \ov{\ov\phi}^f_k(\tau,1,y)\widehat\Phi_k(\tau,0,y)dyd\tau\\
=&\sum_{k=1}^N \int_0^1\int_0^{\widehat g_ft} \f{a_1\ov g_f(\ov u_f(t-\f{x}{\widehat g_f}))}{\tau_{gf}}\ \ov{\ov\phi}^f_k(t-\f{x}{\widehat g_f},1,y)\widehat\varphi_{0k}(x,y)dxdy.
\end{align*}
Since $\vec{\varphi}_0\in{C^1_0((0,1)^2)}$ and $t\in[0,\delta]$ are both arbitrary, we obtain in $C^0([0,\delta];L^1((0,1)^2))$ that
for $k=1,\cdots,N$
\be\label{hat1}
\ov{\widehat\phi}^f_k(t,x,y)=\left\{
\begin{array}{l}
\displaystyle\f{a_1\ov g_f(u_f(t-\f{x}{\widehat g_f}))}{\tau_{gf}}\ \ov{\ov\phi}^f_k(t-\f{x}{\widehat g_f},1,y),
\quad \text{if}\ (x,y)\in[0,\widehat g_ft]\times[0,1],\\
\ov{\widehat\phi}^f_k(t,x,y)=\widehat\phi^f_{k0}(x-\widehat g_ft,y)=\widehat\phi^f_k(t,x,y),\quad \text{if}\ 
(x,y)\in[\widehat g_ft,1]\times[0,1].
\end{array}
\right.
\ee
Let us recall the definition of the characteristic $\ov\xi_3$ and also the definition \eqref{barbeta0} of $(\ov\tau_0,\ov\beta_0)$.
By solving the backward linear Cauchy problem \eqref{back}, noting \eqref{hat1} and \eqref{jy3} of Lemma \ref{jacobi} in Appendix 6.3, we have
\begin{align*}
&\sum_{k=2}^N\int_0^t\int_0^1\f{2\tau_{gf}}{a_1}\ \ov{\widehat\phi}^f_{k-1}(\tau,1,y)\ov\Phi_k(\tau,0,y)dyd\tau\\
=&\sum_{k=2}^N\int_0^t\int_0^1\f{2\tau_{gf}}{a_1}\ \widehat\phi^f_{k-1}(\ov\tau_0,1,\ov\beta_0)\ov\Phi_k(\ov\tau_0,0,\ov\beta_0)d\ov\beta_0d\ov\tau_0\\
=&\sum_{k=2}^N\iint\limits_{\ov{\ov\omega}^{f,t}_2}\f{2\tau_{gf}\widehat\phi^f_{k-1}(\ov\tau_0,1,\ov\beta_0)}{a_1\ov g_f(\ov u_f(\ov\tau_0))}\ \ov\varphi_{0k}(x,y)
e^{-\int_{\ov\tau_0}^t[\ov\lambda(\ov y_3,\ov U)+\f{\pa \ov h_f}{\pa y}(\ov y_3,\ov u_f)]}dxdy.\\
\end{align*}
Since $\vec{\varphi}_0\in{C^1_0((0,1)^2)}$ and $t\in[0,\delta]$ are both arbitrary, we obtain in $C^0([0,\delta];L^1((0,1)^2))$ that for $k=1$ 
\be\label{solution21}
\ov{\ov\phi}^f_1(t,x,y)=\left\{
\begin{array}{l}
\ov\phi^f_{10}(\ov x_0,\ov y_0 )e^{-\int_0^t[\ov\lambda(\ov y_2,\ov U)+\f{\pa\ov h_f}{\pa y}(\ov y_2,\ov u_f)](\sigma)\,d\sigma},\quad \text{if}\ (x,y)\in\ov{\ov\omega}^{f,t}_1,\\
0,\quad \text{else}.
\end{array}
\right.
\ee
For $k=2,\cdots,N$, we have
\be\label{solution22}
\ov{\ov\phi}^f_k(t,x,y)=\left\{
\begin{array}{l}
\ov\phi^f_{k0}(\ov x_0,\ov y_0 )e^{-\int_0^t[\ov\lambda(\ov y_2,\ov U)+\f{\pa\ov h_f}{\pa y}(\ov y_2,\ov u_f)](\sigma)\,d\sigma},\quad \text{if}\ (x,y)\in \ov{\ov\omega}^{f,t}_1,\\
\displaystyle\f{2\tau_{gf}\widehat\phi_{k-1}(\ov\tau_0,1,\ov\beta_0)}{a_1\ov g_f(\ov u_f(\ov\tau_0))}\ 
e^{-\int_{\ov\tau_0}^t[\ov\lambda(\ov y_3,\ov U)+\f{\pa \ov h_f}{\pa y}(\ov y_3,\ov u_f)]},\quad \text{if}\ (x,y)\in \ov{\ov\omega}^{f,t}_2,\\
0,\quad \text{else}.
\end{array}
\right.
\ee
For any fixed $t\in[0,\delta]$, we introduce four new subsets $\ov{\widetilde\omega}^{f,t}_1$, $\ov{\widetilde\omega}^{f,t}_2$, $\ov{\widetilde\omega}^{f,t}_3$ and $\ov{\widetilde\omega}^{f,t}_4$ of $[0,1]^2$
\begin{align*}
&\ov{\widetilde\omega}^{f,t}_1\!\!:=\!\!\Big\{(x,y)|\ \widetilde g_ft\leq x\leq 1,\ \ov{\widetilde\eta}_1(t,\widetilde g_ft)\leq y\leq \ov{\widetilde\eta}_2(t,\widetilde g_ft)\Big\},\\
&\ov{\widetilde\omega}^{f,t}_2\!\!:=\!\!\Big\{(x,y)|\ 0\leq x\leq \widetilde g_ft,\! 0\leq y\leq \ov{\widetilde\eta}_1(t,x)\Big\}
\!\cup\!\Big\{(x,y)|\ \widetilde g_ft\leq x\leq 1,\! 0\leq y\leq \ov{\widetilde\eta}_1(t,\widetilde g_ft)\Big\},\\
&\ov{\widetilde\omega}^{f,t}_3\!\!:=\!\!\Big\{(x,y)|\ 0\leq x\leq \widetilde g_ft,\ \ov{\widetilde\eta}_1(t,x)\leq y\leq\ov{\widetilde\eta}_2(t,x)\Big\},\\
&\ov{\widetilde\omega}^{f,t}_4\!\!:=\!\![0,1]^2\backslash(\ov{\widetilde\omega}^{f,t}_1\cup\ov{\widetilde\omega}^{f,t}_2\cup\ov{\widetilde\omega}^{f,t}_3).
\end{align*} 
Here
$y=\ov{\widetilde\eta}_1(t,x)$ and $y=\ov{\widetilde\eta}_2(t,x)$ satisfy
\begin{align*}
&\displaystyle\f{d\ov{\widetilde\eta}_1}{ds}=\widetilde h_f(\ov{\widetilde\eta}_1,\ov u_f)(s),\quad \ov{\widetilde\eta}_1(t-\f{x}{\widetilde g_f})=0,
\quad t-\f{x}{\widetilde g_f}\leq s\leq t,\\
&\f{d\ov{\widetilde\eta}_2}{ds}=\widetilde h_f(\ov{\widetilde\eta}_2,\ov u_f)(s),\quad \ov{\widetilde\eta}_2(t-\f{x}{\widetilde g_f})=1,
\quad t-\f{x}{\widetilde g_f}\leq s\leq t.
\end{align*}
For any fixed $t\in[0,\delta]$ and $(x,y)\in[0,1]^2$. If $(x,y)\in \ov{\widetilde\omega}^{f,t}_1$, we define $\ov\xi_5=(\ov x_5, \ov y_5)$ by
\begin{equation*} 
\displaystyle\f{d\ov x_5}{ds}=\widetilde g_f,\quad 
\displaystyle\f{d\ov y_5}{ds}=\widetilde h_f(\ov y_5,\ov u_f)(s),\quad \ov \xi_5(t)=(x,y).
\end{equation*}
Let us then define $(\ov x_0,\ov y_0):=(\ov x_5(0),\ov y_5(0))$. By solving the backward linear Cauchy problem \eqref{back}
and noting \eqref{jy5} of Lemma \ref{jacobi} in Appendix 6.3, the last term of \eqref{lastt} can be rewritten as 
\begin{align*}
&\sum_{k=1}^N\int_0^1\int_0^1\widetilde\phi^f_{k0}(\ov x_0,\ov y_0)\widetilde\Phi_k(0,\ov x_0,\ov y_0)d\ov x_0d\ov y_0\\
=&\sum_{k=1}^N\iint\limits_{\ov{\widetilde\omega}^{f,t}_1}
\widetilde\phi^f_{k0}(\ov x_0,\ov y_0)\widetilde\varphi_{0k}(x,y)e^{-\int_0^t[\ov\lambda(\ov y_5,\ov U)+\f{\pa\ov h_f}{\pa y}(\ov y_5,\ov u_f)](\sigma)\,d\sigma}dxdy.
\end{align*}
If $(x,y)\in \ov{\widetilde\omega}^{f,t}_2$, we define $\ov\xi_6=(\ov x_6, \ov y_6)$ by
\begin{equation*} 
\displaystyle\f{d\ov x_6}{ds}=\widetilde g_f,\quad
\displaystyle\f{d\ov y_6}{ds}=\widetilde h_f(\ov y_6,\ov u_f)(s),\quad \ov \xi_6(t)=(x,y).
\end{equation*}
There exists a unique $\ov t_0$ such that $\ov y_6(\ov t_0)=0$, so that we can define $\ov\alpha_0:=\ov x_6(\ov t_0)$. 
By solving the backward linear Cauchy problem \eqref{back}, and noting \eqref{jy6} of Lemma \ref{jacobi} in Appendix 6.3, we have
\begin{align*}
&\sum_{k=1}^N\int_0^t\int_0^{\f{a_1}{a_2}}\widetilde h_f(0,\ov u_f(\tau))\ov{\ov\phi}^f_k(\tau,\f{a_2}{a_1}x,1)\widetilde\Phi_k(\tau,x,0)dxd\tau\\
=&\sum_{k=1}^N\int_0^t\int_0^{\f{a_1}{a_2}}\widetilde h_f(0,\ov u_f(\ov t_0))\ov{\ov\phi}^f_k(\ov t_0,\f{a_2}{a_1}\ov\alpha_0,1)\widetilde\Phi_k(\ov t_0,\ov\alpha_0,0)d\ov\alpha_0 d\ov t_0\\
=&\sum_{k=1}^N\iint\limits_{\ov{\widetilde\omega}^{f,t}_2}\ov{\ov\phi}^f_k(\ov t_0,\f{a_2}{a_1}\ov\alpha_0,1)\widetilde\varphi_{0k}(x,y)e^{-\int_{\ov t_0}^t[\widetilde \lambda(\ov y_6,\ov U)
+\f{\pa\widetilde h_f}{\pa y}(\ov y_6,\ov u_f)](\sigma)\, d\sigma}dxdy.
\end{align*}
If $(x,y)\in \ov{\widetilde\omega}^{f,t}_3$, we define $\ov\xi_9=(\ov x_9, \ov y_9)$ by 
\begin{equation*} 
\displaystyle\f{d\ov x_9}{ds}=\widetilde g_f,\quad 
\displaystyle\f{d\ov y_9}{ds}=\widetilde h_f(\ov y_9,\ov u_f)(s),\quad \ov \xi_9(t)=(x,y).
\end{equation*} 
There exists a unique $\ov\tau_0$ such that $\ov x_9(\ov\tau_0)=0$, so that we can define $\ov\beta_0:=\ov y_9(\ov\tau_0)$. 
By solving the backward linear Cauchy problem \eqref{back} and noting \eqref{jy9} of Lemma \ref{jacobi} in Appendix 6.3, we have
\begin{align*}
&\sum_{k=2}^N\int_0^t\int_0^1 \widetilde g_f\ov{\widetilde\phi}^f_{k-1}(\tau,1,y)\widetilde\Phi_k(\tau,0,y)dyd\tau\\
=&\sum_{k=2}^N\int_0^t\int_0^1 \widetilde g_f\ov{\widetilde\phi}^f_{k-1}(\ov\tau_0,1,\ov\beta_0)\widetilde\Phi_k(\ov\tau_0,0,\ov\beta_0)d\ov\beta_0d\ov\tau_0\\
=&\sum_{k=2}^N\iint\limits_{\ov{\widetilde\omega}^{f,t}_3}
\ov{\widetilde\phi}^f_{k-1}(\ov\tau_0,1,\ov\beta_0)\widetilde\varphi_{0k}(x,y)
e^{-\int_{\ov\tau_0}^t[\widetilde\lambda(\ov y_9,\ov U)+\f{\pa\widetilde h_f}{\pa y}(\ov y_9,\ov u_f)](\sigma)\,d\sigma}dxdy.
\end{align*}
Since $\vec{\varphi}_0\in{C^1_0((0,1)^2)}$ and $t\in[0,\delta]$ are both arbitrary, we obtain in $C^0([0,\delta];L^1((0,1)^2))$ 
that for $k=1$
\be \label{solution23}
\ov{\widetilde\phi}^f_1(t,x,y)=\left\{
\begin{array}{l}
\ov{\widetilde\phi}^f_{10}(\ov x_0,\ov y_0)e^{-\int_0^t[\widetilde\lambda(\ov y_5,\ov U)
+\f{\pa\widetilde h_f}{\pa y}(\ov y_5,\ov u_f)](\sigma)\, d\sigma},\quad \text{if}\ (x,y)\in \ov{\widetilde\omega}^{f,t}_1,\\
\ov{\ov\phi}^f_1(\ov t_0,\displaystyle\f{a_2}{a_1}\ov\alpha_0, 1)e^{-\int_{\ov t_0}^t[\widetilde \lambda(\ov y_6,\ov U)
+\f{\pa\widetilde h_f}{\pa y}(\ov y_6,\ov u_f)](\sigma)\, d\sigma},\quad \text{if}\ (x,y)\in \ov{\widetilde\omega}^{f,t}_2,\\
0,\quad \text{else}.
\end{array}
\right.
\ee
For $k=2,\cdots,N$, we get
\begin{align} \label{solution24}
\ov{\widetilde\phi}^f_k(t,x,y)=\left\{
\begin{array}{l}
\ov{\widetilde\phi}^f_{k0}(\ov x_0,\ov y_0)e^{-\int_0^t[\widetilde\lambda(\ov y_5,\ov U)
+\f{\pa\widetilde h_f}{\pa y}(\ov y_5,\ov u_f)](\sigma)\, d\sigma},\quad \text{if}\ (x,y)\in \ov{\widetilde\omega}^{f,t}_1,\\
\ov{\ov\phi}^f_1(\ov t_0,\displaystyle\f{a_2}{a_1}\ov\alpha_0, 1)e^{-\int_{\ov t_0}^t[\widetilde \lambda(\ov y_6,\ov U)
+\f{\pa\widetilde h_f}{\pa y}(\ov y_6,\ov u_f)](\sigma)\, d\sigma},\quad \text{if}\ (x,y)\in \ov{\widetilde\omega}^{f,t}_2,
\\
\ov{\widetilde\phi}^f_{k-1}(\ov\tau_0, 1,\ov\beta_0)e^{-\int_{\ov\tau_0}^t[\widetilde \lambda(\ov y_9,\ov U)+\f{\pa\widetilde h_f}{\pa y}(\ov y_9,\ov u_f)](\sigma)\,d\sigma},\quad \text{if}\ (x,y)\in \ov{\widetilde\omega}^{f,t}_3,\\
0,\quad \text{else}.
\end{array}
\right.
\end{align}

By \eqref{delta}, we claim that $\ov{\vec M}(t)\in\Omega_{\delta,K}$ and $\ov{\vec M}(t)$ satisfies the same contraction mapping function $\vec G$.
Since $\vec M$ is the unique fixed point of $\vec G$ in $\Omega_{\delta,K}$, therefore we have $\ov{\vec M} \equiv \vec M$.
Consequently, we have $\ov\xi_i\equiv\xi_i(i=1,\cdots,10)$, $\ov{\widetilde\eta}_i\equiv\widetilde\eta_i(i=1,2)$ and $\ov{\ov\eta}\equiv\ov\eta$, so that $(\ov x_0,\ov y_0)\equiv(x_0,y_0)$, $(\ov t_0,\ov\alpha_0)\equiv(t_0,\alpha_0)$, $(\ov\tau_0,\ov\beta_0)\equiv(\tau_0,\beta_0)$, $\ov{\ov\omega}^{f,t}_{i}\equiv\ov\omega^{f,t}_{i}(i=1,2,3)$ and $\ov{\widetilde\omega}^{f,t}_{i}\equiv\widetilde\omega^{f,t}_{i}(i=1,2,3,4)$.
Finally, by comparing the definition \eqref{hat1}-\eqref{solution24} of $\ov{\vec\phi}_f$ with the definition \eqref{for1}-\eqref{so3} of $\vec\phi_f$, we obtain 
$\ov{\vec\phi}_f\equiv\vec\phi_f$. This gives us the uniqueness of the weak solution for small time.
\end{proof}

\section{Proof of the existence of a global solution to the Cauchy problem}
Let us now prove the existence of global solution to Cauchy problem \eqref{eq}-\eqref{bcy1}.
Noting the definition \eqref{GGG} of $G_f$ and the definition \eqref{for1}-\eqref{so3} of the local solution $\vec\phi_f$,
it is easy to check that the following two estimates hold for all $t\in[0,\delta]$
\be \label{wt-bound-loc}
0\leq M(t)\leq K,
\ee
\be \label{phi-bound-loc}
\|\vec\phi_f(t,\cdot)\|_{L^{\infty}((0,1)^2)}\!\!\leq\!\! e^{2(N+1)TK_1}\!\!\max_k\!\!\Big\{\!(\f{2K_1}{K_2}+\f{a_1K_1}{\tau_{gf}})^N\|\ov\phi^f_{k0}\|,
(\f{2K_1}{K_2}+\f{2\tau_{gf}}{a_1K_2})^N\|\widehat\phi^f_{k0}\| ,\|\widetilde\phi^f_{k0}\|\!\Big\},
\ee
where $K$ is defined by \eqref{K}, $K_1$ is defined by \eqref{K1} and $K_2$ is defined by \eqref{K2}.
In order to obtain a global solution, we suppose that we have solved Cauchy problem \eqref{eq}-\eqref{bcy1} 
up to the moment $\tau \in[0,T]$ with
the weak solution $\vec{\phi}_f\in C^0([0,\tau];L^p((0,1)^2))$. It follows from our way to construct the weak solution, we
know that for any $0\leq t\leq\tau$, the weak solution is given by for $k=1$
\be \label{gso1}
\ov\phi^f_1(t,x,y):=\left\{
\begin{array}{l}
\ov\phi^f_{10}(x_0, y_0)e^{-\int_0^t[\ov\lambda(y_2,U)+\f{\pa\ov h_f}{\pa y}(y_2,u_f)](\sigma)\, d\sigma},\quad \text{if}\ (x,y)\in \ov\omega^{f,t}_1,\\
0,\quad \text{else}.
\end{array}
\right.
\ee
For $k=2,\cdots,N$
\be \label{gso2}
\ov\phi^f_k(t,x,y):=\!\!\left\{
\begin{array}{l}
\ov\phi^f_{k0}(x_0, y_0)e^{-\int_0^t[\ov\lambda(y_2,U)+\f{\pa\ov h_f}{\pa y}(y_2,u_f)](\sigma)\, d\sigma},\quad \text{if}\ (x,y)\in \ov\omega^{f,t}_1,\\
\displaystyle\f{2\tau_{gf}\widehat\phi^f_{k-1}(\tau_0,1,\beta_0)}{a_1\ov g_f(u_f(\tau_0))}\ e^{-\int_{\tau_0}^t[\ov\lambda(y_3,U)+\f{\pa\ov h_f}{\pa y}(y_3,u_f)](\sigma)\,d\sigma},\quad \text{if}\ (x,y)\in \ov\omega^{f,t}_2,\\
0,\quad \text{else}.
\end{array}
\right.
\ee
For $k=1,\cdots,N$
\be \label{gso3}
\widehat\phi^f_k(t,x,y):=\left\{
\begin{array}{l}
\widehat\phi^f_{k0}(x-\widehat g_ft,y),\quad \text{if}\ 0\leq t\leq\displaystyle\f{x}{\widehat g_f},\ y\in[0,1],\\
\displaystyle\f{a_1\ov g_f(u_f(t-\f{x}{\widehat g_f}))}{\tau_{gf}}\ \ov\phi^f_k(t-\f{x}{\widehat g_f},1,y),\quad \text{if}\ \f{x}{\widehat g_f}\leq t\leq T,\ y\in[0,1].
\end{array}
\right.
\ee
\be \label{gso4}
\widetilde\phi^f_1(t,x,y):=\left\{
\begin{array}{l}
\widetilde\phi^f_{10}(x_0,y_0)e^{-\int_0^t[\widetilde\lambda(y_5,U)
+\f{\pa\widetilde h_f}{\pa y}(y_5,u_f)](\sigma)\, d\sigma},\quad \text{if}\ (x,y)\in \widetilde\omega^{f,t}_1,\\
\ov\phi^f_1(t_0,\displaystyle\f{a_2}{a_1}\alpha_0, 1)e^{-\int_{t_0}^t[\widetilde\lambda(y_6,U)
+\f{\pa\widetilde h_f}{\pa y}(y_6,u_f)](\sigma)\, d\sigma},\quad \text{if}\ (x,y)\in \widetilde\omega^{f,t}_2,\\
0,\quad \text{else}.
\end{array}
\right.
\ee
For $k=2,\cdots,N$
\be \label{gso5}
\widetilde\phi^f_k(t,x,y):=\left\{
\begin{array}{l}
\widetilde\phi^f_{k0}(x_0,y_0)e^{-\int_0^t[\widetilde\lambda(y_5,U)
+\f{\pa\widetilde h_f}{\pa y}(y_5,u_f)](\sigma)\, d\sigma},\quad \text{if}\ (x,y)\in \widetilde\omega^{f,t}_1,\\
\ov\phi^f_k(t_0,\displaystyle\f{a_2}{a_1}\alpha_0, 1)e^{-\int_{t_0}^t[\widetilde\lambda(y_6,U)
+\f{\pa\widetilde h_f}{\pa y}(y_6,u_f)](\sigma)\, d\sigma},\quad \text{if}\ (x,y)\in \widetilde\omega^{f,t}_2,\\
\widetilde\phi^f_{k-1}(\tau_0, 1,\beta_0)e^{-\int_{\tau_0}^t[\widetilde \lambda(y_9,U)+\f{\pa\widetilde h_f}{\pa y}(y_9,u_f)](\sigma)\,d\sigma},\quad 
\text{if}\ (x,y)\in \widetilde\omega^{f,t}_3,\\
0,\quad \text{else}.
\end{array}
\right.
\ee

As time increases, for each cell cycle $k\ (k=1,\cdots,N)$ in Phase 1, the characteristic passing through the origin may have two possible cases.
It may either intersect with the front face (see Fig 6 (a)) or intersect with the right face (see Fig 6 (b)).
For each cell cycle $k\ (k=1,\cdots,N)$ in Phase 3, the characteristic passing through the origin will definitely intersect with the front face due to the fact that $\widetilde h_f(1,u_f)<0$ (see Fig 6 (c)).
We get in any case that
\begin{align*}
M_f(t):=&\sum_{k=1}^N\iint\limits_{\ov\omega^{f,t}_2}a_1\gamma^2_s y\ov\phi^f_k(t,x,y)dxdy
+\sum_{k=1}^N\int_0^1\int_0^1 (a_2-a_1)\gamma^2_s y\widehat\phi^f_k(t,x,y)dxdy\\
&+\!\!\sum_{k=1}^N\!\iint\limits_{\widetilde\omega^{f,t}_2}a_2\gamma_0(\gamma_0 y\!\!+\!\!\gamma_s)\widetilde\phi^f_k(t,x,y)dxdy
\!\!+\!\!\sum_{k=1}^N\!\iint\limits_{\widetilde\omega^{f,t}_3}a_2\gamma_0(\gamma_0 y\!\!+\!\!\gamma_s)\widetilde\phi^f_k(t,x,y)dxdy
\end{align*}

\begin{figure}[htbp!]
\subfigure[]{
\begin{minipage}[b]{0.32\textwidth}
\centering
\includegraphics[width=1.8in]{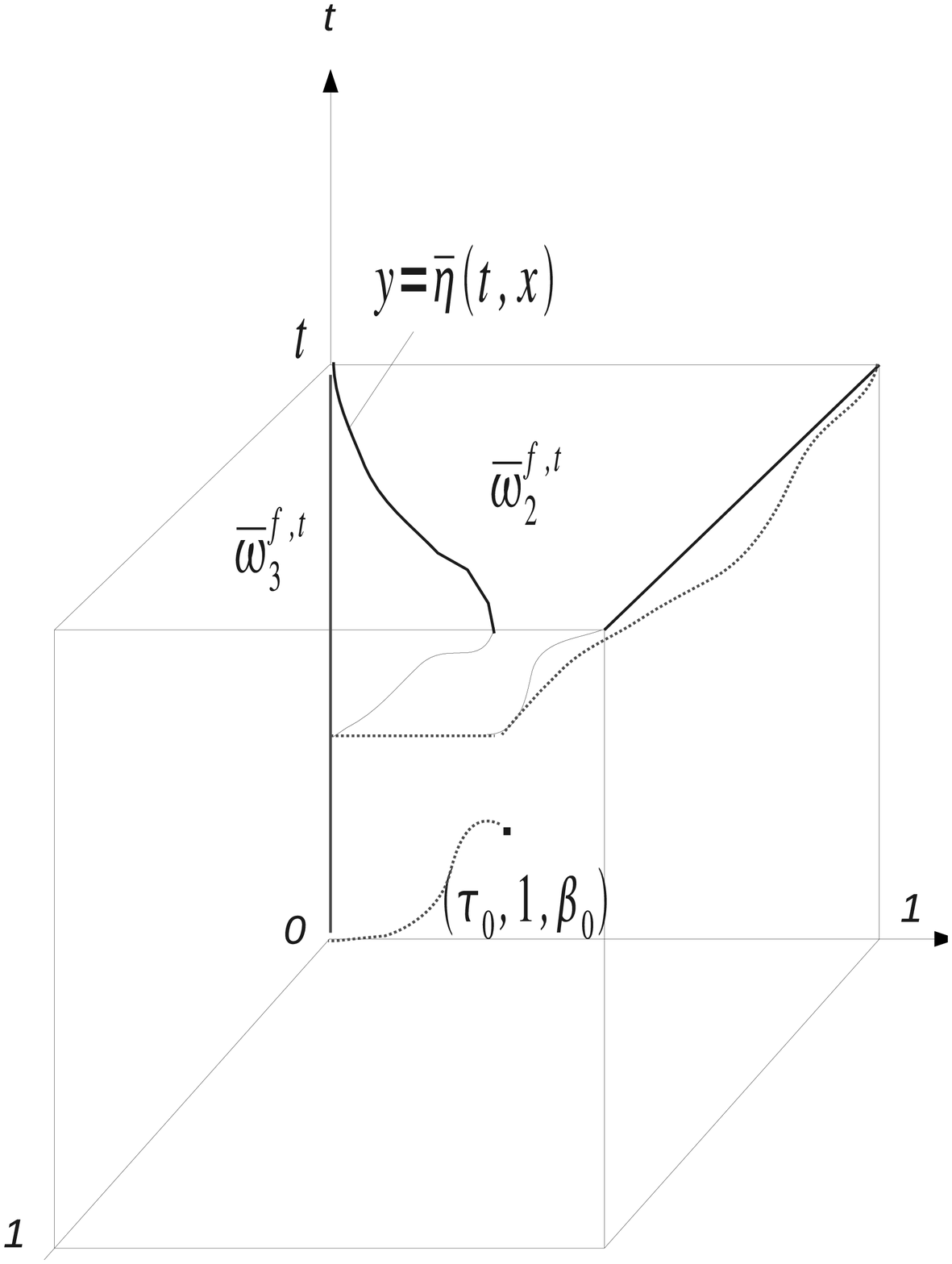}
\end{minipage}}%
\subfigure[]{
\begin{minipage}[b]{0.32\textwidth}
\centering
\includegraphics[width=1.8in]{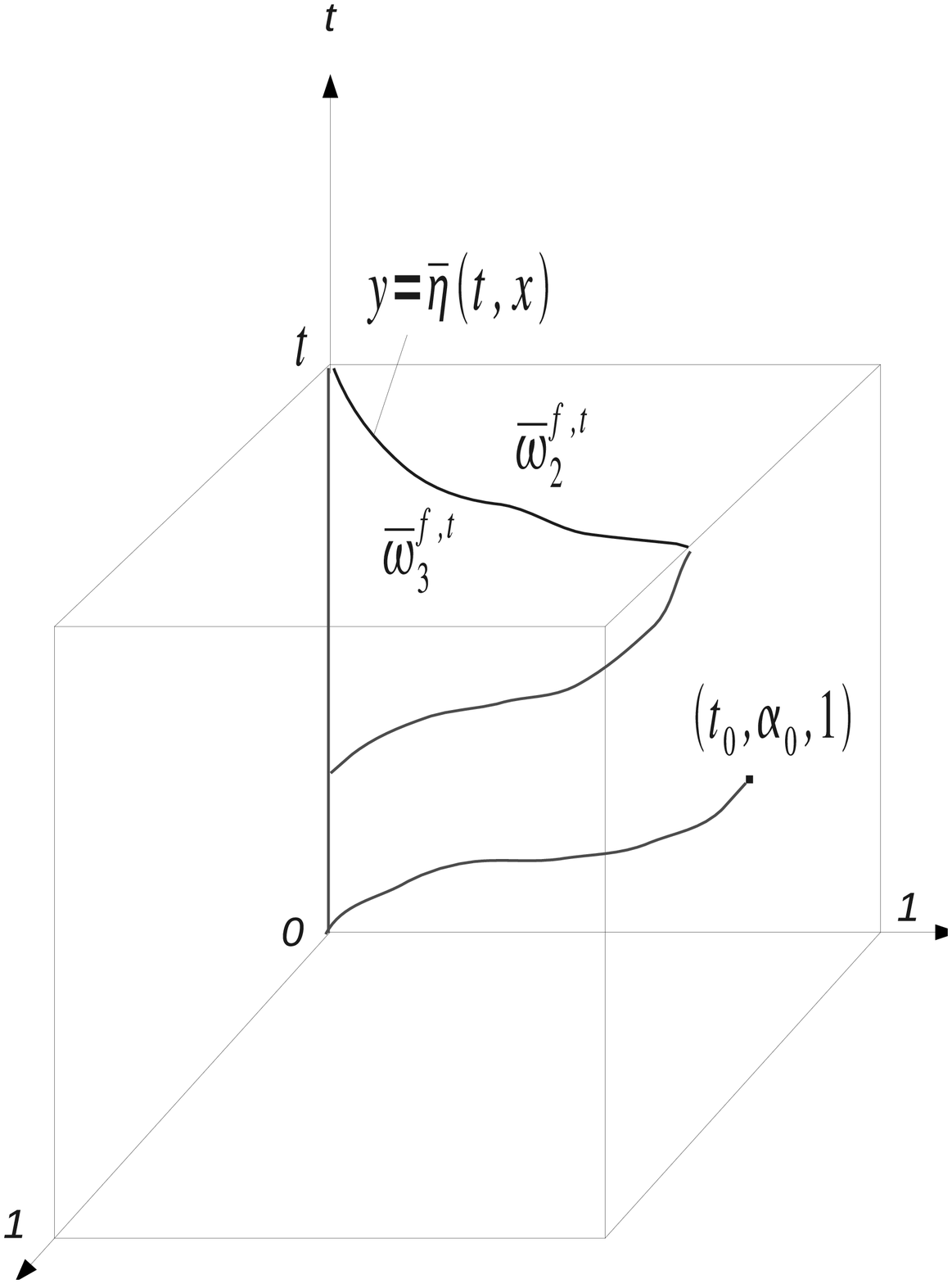}
\end{minipage}}%
\subfigure[]{
\begin{minipage}[b]{0.32\textwidth}
\centering
\includegraphics[width=1.8in]{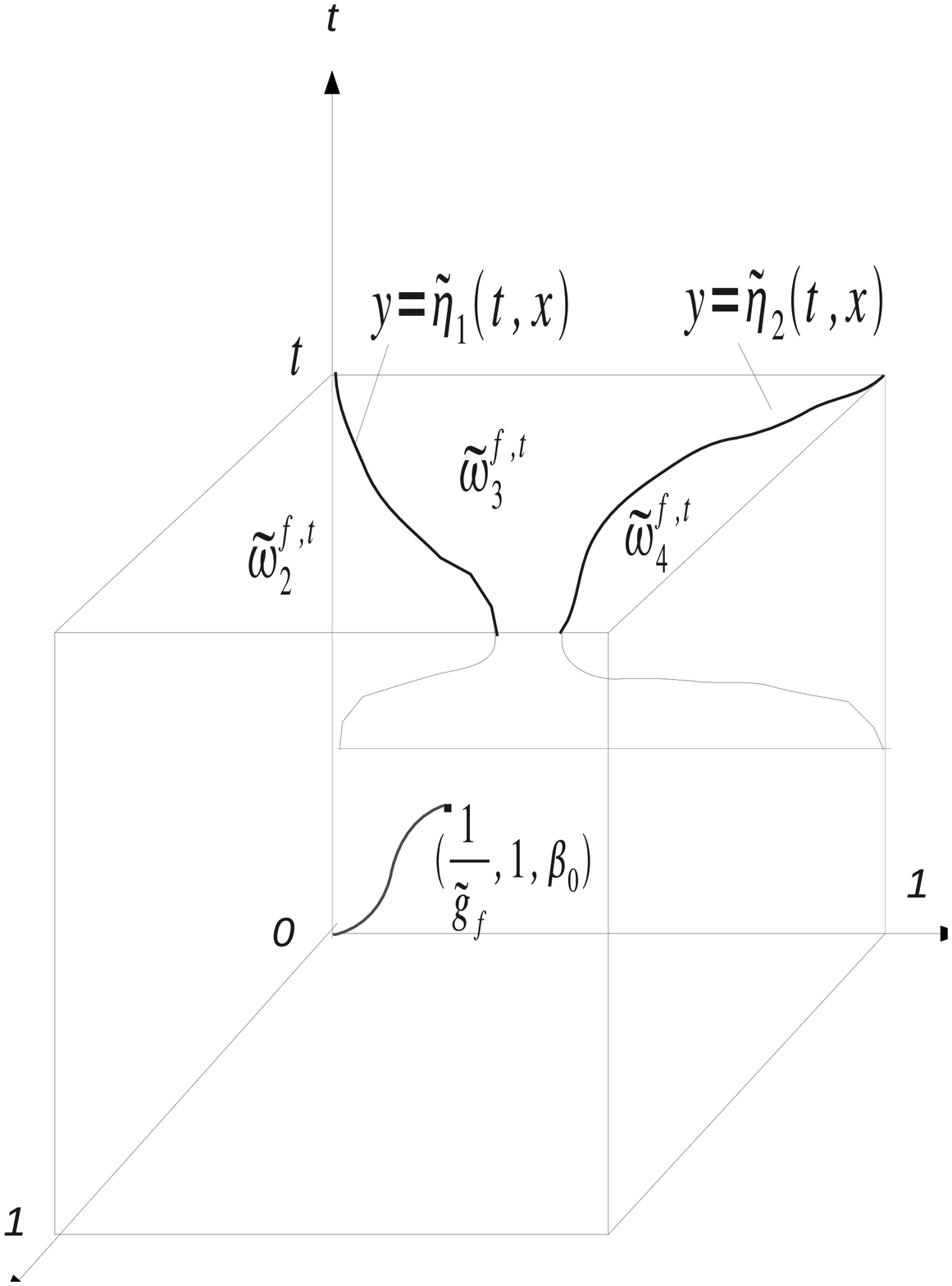}
\end{minipage}}%
\caption{For $t$ large enough. Case (a): the characteristic passing through the origin intersects the front face at $(\tau_0,1,\beta_0)$ in Phase 1;
Case (b): the characteristic passing through the origin intersects the right face at $(t_0,\alpha_0,1)$ in Phase 1;
Case (c): due to the fact that $\widetilde h_f(1,u_f)<0$ in Phase 3, the characteristic passing through the origin will definitely intersect the front face at $(\f{1}{\widetilde g_f},1,\beta_0)$. }
\label{Fig 5} 
\end{figure}

We can prove that estimate \eqref{wt-bound-loc} holds for every $t\in[0,T]$ by 
tracing back $\ov\phi^f_k(t,x,y)$, $\widehat\phi^f_k(t,x,y)$ and 
$\widetilde\phi^f_k(t,x,y)$ along the characteristics to the initial data at most $N$ times.
Moreover, noting the definition \eqref{gso1}-\eqref{gso5} of the global solution, it is easy to check 
that the uniform a priori estimate
\eqref{phi-bound-loc} holds for every $t\in [0,T]$. Hence we can
choose $\delta\in[0,T]$ independent of $\tau$. Applying Lemma \ref{ex-sol} and Lemma \ref{uni}
again, the weak solution $\vec{\phi}_f\in C^0([0,\tau];
L^p((0,1)^2))$ is extended to the time interval $[\tau,\tau+\delta] \cap
[\tau,T]$. Step by step, we finally obtain a unique global weak
solution $\vec{\phi}_f\in C^0([0,T];L^p((0,1)^2))$.
This finishes the proof of the existence of a global solution to 
Cauchy problem \eqref{eq}-\eqref{bcy1}.

\section{Appendix}
\subsection{Introduction of the model}
In this section, we first recall the systems of equations describing the dynamics of the cell density of a follicle in the model of F. Cl\'{e}ment \cite{F08, FC05, FC07}.
The cell population in a follicle $f$ is represented by cell density functions $\phi^f_{j,k}(t,a,\gamma)$
defined on each cellular phase $Q^f_{j,k}$ with age $a$ and maturity $\gamma$,
which satisfy the following conservation laws
\be \label{eq1}
\f{\pa\phi^f_{j,k}}{\pa t}+\f{\pa(g_f(u_f)\phi^f_{j,k})}{\pa a}+\f{\pa(h_f(\gamma, u_f)\phi^f_{j,k})}{\pa\gamma}=-\lambda(\gamma, U)\phi^f_{j,k},\quad 
\text{in}\ {Q^f_{j,k}}
\ee
\begin{align*}
&Q^f_{j,k}:=\Omega_{j,k}\times[0,T],\ \text{for}\ j=1,2,3,\ f=1,\cdots, n, \text{where}\\
&\Omega_{1,k}:=[(k-1)a_2,(k-1)a_2+a_1]\times[0,\gamma_s],\Omega_{2,k}:=[(k-1)a_2+a_1,ka_2]\times[0,\gamma_s],\\
&\Omega_{3,k}:=[(k-1)a_2,ka_2].
\end{align*}
Here $k=1,\cdots,N$, and $N$ is the number of consecutive cell cycles (see Fig 7).

Define the maturity operator $M$ as 
\be \label{M}
M(\varphi)(t):=\int_0^{\gamma_{max}}\int_0^{a_{max}}\gamma\varphi(t,a,\gamma)\,dad\gamma.
\ee
Then
\be\label{Mf}
M_f:=\sum_{j=1}^3\sum_{k=1}^N\int_0^{\gamma_{max}}\int_0^{a_{max}}\gamma\phi^f_{j,k}(t,a,\gamma)\,dad\gamma
\ee
is the global follicular maturity on the follicular scale, while
\be \label{MM}
M:=\sum_{f=1}^{n}\sum_{j=1}^3\sum_{k=1}^N\int_0^{\gamma_{max}}\int_0^{a_{max}}\gamma\phi^f_{j,k}(t,a,\gamma)\,dad\gamma
\ee
is the global maturity on the ovarian scale.

The velocity and source terms are all smooth functions of $u_f$ and $U$, where $u_f$ is the local control
and $U$ is the global control. In this paper, we consider close/open loop problem, that is to say $u_f$ and $U$ are functions
of $M_f, M$ and $t$. As an instance of close loop problem, the velocity and source terms are
given by the following expressions in \cite{F08} (all parameters are positive constants)
\begin{align*}
&g_f(u_f)=\tau_{gf}(1-g_1(1-u_f)), \quad \text{in}\quad {\Omega_{1,k}},\\
&g_f(u_f)=\tau_{gf},\quad \text{in} \quad {\Omega_{j,k}},\quad j=2,3,\\
&h_f(\gamma,u_f)=\tau_{hf}(-\gamma^2+(c_1\gamma+c_2)(1-e^{\f{-u_f}{\ov u}})),\quad \text{in} \quad {\Omega_{j,k}},\quad j=1,3,\\
&h_f(\gamma,u_f)=\lambda(\gamma,U)=0,\quad \text{in}\quad {\Omega_{2,k}},\\
&\lambda(\gamma,U)=Ke^{-(\f{\gamma-\gamma_s}{\ov\gamma})^2}(1-U),\quad \text{in}\quad {\Omega_{j,k}},\quad j=1,3.\\
&U=S(M)+U_0=U_0+U_s+ \frac{1-U_s}{1+e^{c(M-m)}},\\
&u_f=b(M_f)U=\min\{b_1+\f{e^{b_2M_f}}{b_3},1\}\cdot[U_0+U_s+ \frac{1-U_s}{1+e^{c(M-m)}}].
\end{align*}
The initial conditions are given as follows
\be \label{IC}
\phi^f_{j,k}(0,a,\gamma)=\phi^f_{k0}(a,\gamma)|_{\Omega_{j,k}},\qquad j=1,2,3.
\ee
The boundary conditions are given as follows
\begin{align} \label{b}
g_f(u_f)\phi^f_{1,k}(t,(k-1)a_2,\gamma)&=\begin{cases}
2\tau_{gf}\phi^f_{2,k-1}(t,(k-1)a_2,\gamma),\ \text{for}\ k\geq 2,\\
0,\quad \text{for}\ k=1,\end{cases}
\text{for}\ \gamma\!\in\!\ [0,\gamma_s].\nonumber\\
\phi^f_{1,k}(t,a,0)&=0,\quad \text{for}\ a\in\ [(k-1)a_2,(k-1)a_2+a_1],\nonumber\\
\tau_{gf}\phi^f_{2,k}(t,(k-1)a_2+a_1,\gamma)&=g_f(u_f)\phi^f_{1,k}(t,(k-1)a_2+a_1,\gamma),\quad \text{for}\ \gamma\!\in\!\ [0,\gamma_s],\nonumber\\
\phi^f_{2,k}(t,a,0)&=0,\quad \text{for}\ a\in\ [(k-1)a_2+a_1,ka_2],\nonumber\\
\phi^f_{3,k}(t,(k-1)a_2,\gamma)&=\begin{cases}
\phi^f_{3,k-1}(t,(k-1)a_2,\gamma),\quad \text{for}\ k \geq 2,\\
0,\quad \text{for}\ k=1,
\end{cases}
\text{for}\ \gamma\!\in\!\ [\gamma_s,\gamma_m],\nonumber\\
\phi^f_{3,k}(t,a,\gamma_s)&=\begin{cases}
\phi^f_{1,k}(t,a,\gamma_s),\quad \text{for}\ a\in\ [(k-1)a_2,(k-1)a_2+a_1],\nonumber\\
0,\quad \text{for}\ a\in\ [(k-1)a_2+a_1,ka_2].
\end{cases}
\end{align}

\vspace{20mm}
\setlength{\unitlength}{0.085in}
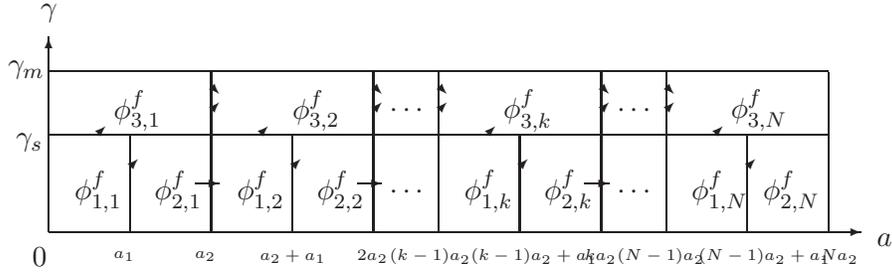
\begin{figure}[htbp!]
\begin{picture}(-10,10)

\put(7,1){\vector(1,0){50}} \put(7,1){\vector(0,1){12}}

\put(7,10.9){\line(1,0){48}} \put(7,7){\line(1,0){48}}
\put(12,1){\line(0,1){5.9}} \put(17,1){\line(0,1){9.8}}
\put(22,1){\line(0,1){5.9}} \put(27,1){\line(0,1){9.8}}
 \put(31,1){\line(0,1){9.8}}

\put(28,3){\makebox(2,1)[l]{$\cdots$}}
\put(28,8){\makebox(2,1)[l]{$\cdots$}}

 \put(36,1){\line(0,1){5.9}}
\put(41,1){\line(0,1){9.8}} \put(45,1){\line(0,1){9.8}}
\put(50,1){\line(0,1){5.9}} \put(55,1){\line(0,1){9.8}}

\put(5,6){\makebox(2,1)[l]{$\gamma_s$}}
\put(4.5,10.5){\makebox(2,1)[l]{$\gamma_m$}}
\put(6.5,14){\makebox(2,1)[l]{$\gamma$}}

\put(58,0){\makebox(2,1)[l]{$a$}}
\put(6,-1){\makebox(2,1)[l]{0}}
\put(11,-1){\makebox(2,1)[l]{\tiny$a_1$}}
\put(16,-1){\makebox(2,1)[l]{\tiny$a_2$}}
\put(20,-1){\makebox(2,1)[l]{\tiny$a_2+a_1$}}
\put(26,-1){\makebox(2,1)[l]{\tiny$2a_2$}}
\put(28,-1){\makebox(2,1)[l]{\tiny $(k-1)a_2$}}
\put(33,-1){\makebox(2,1)[l]{\tiny $(k-1)a_2+a_1$}}
\put(40,-1){\makebox(2,1)[l]{\tiny $ka_2$}}
\put(42,-1){\makebox(2,1)[l]{\tiny $(N-1)a_2$}}
\put(47,-1){\makebox(2,1)[l]{\tiny $(N-1)a_2+a_1$}}
\put(54.5,-1){\makebox(2,1)[l]{\tiny $Na_2$}}

\put(8,3){\makebox(2,1)[l]{ $\phi^f_{1,1}$}}
\put(13.5,3){\makebox(2,1)[l]{$\phi^f_{2,1}$}}
\put(11,8){\makebox(2,1)[l]{$\phi^f_{3,1}$}}

\put(18,3){\makebox(2,1)[l]{ $\phi^f_{1,2}$}}
\put(23.5,3){\makebox(2,1)[l]{$\phi^f_{2,2}$}}
\put(22,8){\makebox(2,1)[l]{$\phi^f_{3,2}$}}

\put(32,3){\makebox(2,1)[l]{ $\phi^f_{1,k}$}}
\put(37.5,3){\makebox(2,1)[l]{$\phi^f_{2,k}$}}
\put(35,8){\makebox(2,1)[l]{$\phi^f_{3,k}$}}

\put(46,3){\makebox(2,1)[l]{ $\phi^f_{1,N}$}}
\put(51,3){\makebox(2,1)[l]{$\phi^f_{2,N}$}}
\put(49,8){\makebox(2,1)[l]{$\phi^f_{3,N}$}}

\put(42,3){\makebox(2,1)[l]{$\cdots$}}
\put(42,8){\makebox(2,1)[l]{$\cdots$}}

\put(16,7.5){\vector(1,1){1.5}} \put(16,11){\vector(1,-1){1.5}} 
\put(26,7.5){\vector(1,1){1.5}} \put(26,11){\vector(1,-1){1.5}}  
\put(30,7.5){\vector(1,1){1.5}} \put(30,11){\vector(1,-1){1.5}} 
\put(40,7.5){\vector(1,1){1.5}} \put(40,11){\vector(1,-1){1.5}}
\put(44,7.5){\vector(1,1){1.5}} \put(44,11){\vector(1,-1){1.5}}

\put(9,6){\vector(1,1){1.5}} 
\put(11,4){\vector(1,1){1.5}}
\put(16,4){\vector(1,0){1.5}}
\put(19,6){\vector(1,1){1.5}} 
\put(21,4){\vector(1,1){1.5}}
\put(26,4){\vector(1,0){1.5}}
\put(33,6){\vector(1,1){1.5}} 
\put(35,4){\vector(1,1){1.5}}
\put(40,4){\vector(1,0){1.5}}
\put(47,6){\vector(1,1){1.5}} 
\put(49,4){\vector(1,1){1.5}}
\end{picture}
\caption{The illustration of $N$ cell cycles for follicle $f$; $a$ represents the age of the cell and $\gamma$ represents the maturity of the cell.
The top of the domain corresponds to the differentiation phase and the bottom to the the proliferation phase.}
\end{figure}
For sake of simplicity, we denote by $\gamma_0:=\gamma_m-\gamma_s>0$ in the whole paper the difference between the 
maximum $\gamma_s$ of the maturity in Phase 1 and the 
maximum $\gamma_m$ of the maturity in Phase 3.
\subsection{Mathematical reformulation}
In this section, we perform a mathematical reformulation of the original model introduced in Appendix 6.1.
Notice that in system \eqref{eq1}, all the unknowns are defined on
different domains, so that one has to solve the equations successively. Here
we transform system \eqref{eq1} into a regular one where the unknowns are
defined on the same domain $[0,T]\times [0,1]^2$. 
We denote by $\ov\phi^f_k$ the density functions and by $\ov g_f$ and $\ov h_f$ the age and 
maturity velocities for Phase 1; $\widehat\phi^f_k$ the density functions 
and $\widehat g_f$ the age velocities for Phase 2; $\widetilde\phi^f_k$ the density functions and $\widetilde g_f$, $\widetilde h_f$ the age and 
maturity velocities for Phase 3.

Let
\be 
\ov\phi^f_k(t,x,y):=\phi^f_{1,k}(t,a,\gamma),\quad
(a,\gamma)\in \Omega_{1,k},\ee
where %
\be
x:=\f {a-(k-1)a_2}{a_1},\ y:=\f{\gamma}{\gamma_s},
\ee
so that we get $(a,\gamma)\in \Omega_{1,k} \Longleftrightarrow (x,y)\in [0,1]^2$, and
\be 
(\ov\phi^f_k)_t+(\ov g_f\ov\phi^f_k)_x+(\ov h_f\ov\phi^f_k)_y= -\ov\lambda\ \ov\phi^f_k,\ee
with
\be 
\ov g_f(u_f):=\f{g_f(u_f)}{a_1},\quad \ov h_f(y,u_f):=\f{h_f(\gamma_s y,u_f)}{\gamma_s},\quad
\ov\lambda(y,U):=\lambda(\gamma_s y,U).\ee
Let
\be 
\widehat\phi^f_k(t,x,y):=\phi^f_{2,k}(t,a,\gamma),\quad (a,\gamma)\in \Omega_{2,k},
\ee
where
\be
x:=\f{a-(k-1)a_2-a_1}{a_2-a_1},\ y:=\f{\gamma}{\gamma_s}.
\ee
So that we get $(a,\gamma)\in \Omega_{2,k}\Longleftrightarrow (x,y)\in [0,1]^2$, and 
\be 
(\widehat\phi^f_k)_t+\widehat g_f(\widehat\phi^f_k)_x=0,\quad \widehat g_f:=\f{\tau_{gf}}{a_2-a_1}.
\ee
Let
\be \widetilde\phi^f_k(t,x,y):=\phi^f_{3,k}(t,a,\gamma),\quad
(a,\gamma)\in \Omega_{3,k},\ee
where%
\be
x:=\f {a-(k-1)a_2}{a_2},\ y:=\f{\gamma-\gamma_s}{\gamma_0},\ee
so that we get $(a,\gamma)\in \Omega_{3,k}\Longleftrightarrow (x,y)\in [0,1]^2$, and
\be (\widetilde\phi^f_k)_t+(\widetilde g_f\widetilde\phi^f_k)_x+(\widetilde h_f\widetilde\phi^f_k)_y=
-\widetilde\lambda\widetilde\phi^f_k,\ee
with
\be 
\widetilde g_f:=\f{\tau_{gf}}{a_2},\ \widetilde h_f(y,u_f):=\f{h_f(\gamma_0 y+\gamma_s
,u_f)}{\gamma_0},\ \widetilde\lambda(y,U):=\lambda(\gamma_0 y+\gamma_s,U).
\ee
Accordingly, let us denote the initial conditions \eqref{IC} with new notations by 
\be 
\vec\phi_{f0}=(\ov\phi^f_{10}(x,y),\cdots,\ov\phi^f_{N0}(x,y),\ \widehat\phi^f_{10}(x,y),\cdots\widehat\phi^f_{N0},\
 \widetilde\phi^f_{10}(x,y),\cdots,\widetilde\phi^f_{N0}(x,y)),
\ee
where
\be 
\ov\phi^f_{k0}(x,y):=\phi^f_{1,k}(0,a,\gamma),\quad \widehat\phi^f_{k0}(x,y):=\phi^f_{2,k}(0,a,\gamma),\quad \widetilde\phi^f_{k0}(x,y):=\phi^f_{3,k}(0,a,\gamma).
\ee
Let $\vec\phi_f=(\ov\phi^f_1,\cdots,\ov\phi^f_N,\ \widehat\phi^f_1,\cdots,\widehat\phi^f_N,\ 
\widetilde\phi^f_1,\cdots,\widetilde\phi^f_N)^T$, we have
\be \label{eq2}
{\vec{\phi}_f(t,x,y)}_t+(A_f\vec{\phi}_f(t,x,y))_x +(B_f\vec{\phi}_f(t,x,y))_y= C\vec{\phi}_f(t,x,y),
\ee
\begin{equation*}
t\in[0,T],\quad(x,y)\in {[0,1]^2},
\end{equation*}
where
\begin{align*}
A_f:&=\text{diag}\ \{\overbrace{\ov g_f,\cdots,\ov g_f}^{N},\ \overbrace{\widehat g_f,\cdots,\widehat g_f}^{N}\
\overbrace{\widetilde g_f,\cdots,\widetilde g_f}^{N}\},\\
B_f:&=\text{diag}\ \{\overbrace{\ov h_f,\cdots,\ov h_f}^{N},\ \overbrace{0,\cdots,0}^{N}\
\overbrace{\widetilde h_f,\cdots,\widetilde h_f}^{N}\},\\
C:&=-\text{diag}\ \{\overbrace{\ov\lambda,\cdots,\ov\lambda}^{N},\ \overbrace{0,\cdots,0}^{N}\
\overbrace{\widetilde\lambda,\cdots,\widetilde\lambda}^{N}\}.
\end{align*}
From the original expression of $h_f(\gamma,u_f)$ (see Appendix 6.1), let us define
\begin{equation*} 
\gamma_{\pm}(u_f):=\f{c_1(1-e^{\f{-u_f}{\ov u}})\pm\sqrt{c^2_1(1-e^{\f{-u_f}{\ov u}})^2+4c_2(1-e^{\f{-u_f}{\ov u}})}}{2}.
\end{equation*}
It is easy to see that $\gamma_+(u_f)$ is an increasing function of $u_f$, and
\begin{equation*}
h_f(\gamma,u_f)=\tau_{hf}(\gamma_+(u_f)-\gamma)(\gamma-\gamma_-(u_f)).
\end{equation*}
Hence, when $\gamma=\gamma_+(u_f)$, we have $h_f(\gamma,u_f)=0$, moreover
\begin{align*}
&h_f(\gamma,u_f)>0,\quad \text{if}\ 0\leq\gamma<\gamma_+(u_f),\\
&h_f(\gamma,u_f)<0,\quad \text{if}\ \gamma>\gamma_+(u_f).
\end{align*}
Furthermore, under the assumption that the local control $u_f$ satisfies $\gamma_+(u_f)>\gamma_s$,
and 
\begin{equation*}
0\leq\gamma_+(u_f)\leq\gamma_+(1)\simeq(1-e^{\f{-1}{\ov u}})\displaystyle\f{c_1+\sqrt{c^2_1+4c_2}}{2}<\gamma_m.
\end{equation*}
From the fact that $0<\gamma\leq\gamma_s$ in Phase 1, and $\gamma_s\leq\gamma\leq\gamma_m$ in Phase 3, we have
\begin{align*}
& h_f(\gamma,u_f)>0,\quad 0<\gamma\leq\gamma_s,\\
& h_f(\gamma_s,u_f)>0,\quad h_f(\gamma_m,u_f)<0.
\end{align*}
Corresponding to the new notations, we have assumptions
\begin{align}
&\ov h_f(y,u_f)>0,\quad \forall y\in[0,1],\nonumber\\
&\widetilde h_f(0,u_f)>0,\quad \widetilde h_f(1,u_f)<0.
\end{align}
Since
\begin{align*}
 M(\phi^f_{1,k})&=\iint\limits_{\Omega_{1,k}}
   \gamma\phi^f_{1,k}(t,a,\gamma) \,da\,d\gamma
   =\int_0^1\int_0^1 a_1\gamma_s^2 y\ov\phi^f_k(t,x,y)\,dx\,dy,\\
  M(\phi^f_{2,k})&=\iint\limits_{\Omega_{2,k}}
   \gamma\phi^f_{2,k}(t,a,\gamma) \,da\,d\gamma 
=\int_0^1\int_0^1(a_2-a_1)\gamma^2_s y\widehat\phi^f_k(t,x,y)\,dx\,dy,\\
 M(\phi^f_{3,k})&=\iint\limits_{\Omega_{3,k}}
   \gamma\phi^f_{3,k}(t,a,\gamma) \,da\,d\gamma
   =\int_0^1\int_0^1 a_2\gamma_0(\gamma_0 y +\gamma_s)
   \widetilde\phi^f_k(t,x,y)\,dx\,dy.
 \end{align*}
We have
\begin{align*} M_f(t)=&\sum_{k=1}^N M(\phi^f_{1,k})+\sum_{k=1}^N M(\phi^f_{2,k})+\sum_{k=1}^N M(\phi^f_{3,k})\nonumber\\
=&\sum_{k=1}^N \int_0^1\int_0^1 a_1\gamma_s^2 y
\ov\phi^f_k(t,x,y)dxdy+\sum_{k=1}^N \int_0^1\int_0^1(a_2-a_1)\gamma^2_s y\widehat\phi^f_k(t,x,y)dxdy\\
&+\sum_{k=1}^N \int_0^1\int_0^1 a_2\gamma_0(\gamma_0 y +\gamma_s) \widetilde\phi^f_k(t,x,y) \,dx\,dy.
\end{align*}
\subsection{Basic lemmas and some notations}
The following lemma is used to prove the existence of the
weak solution to Cauchy problem \eqref{eq}-\eqref{bcy1}, when we derive the contraction mapping
function $\vec G$, change 
variables in certain integrals (see Section 3) and prove the uniqueness of the solution (see Section 4). 
\begin{lem}\label{jacobi}
The characteristic $\xi_2=(x_2,y_2)$ passing through $(t,x,y)$ intersects the bottom face at $(0,x_0,y_0)$. We have
\be\label{jy2}
\f{\pa(x,y)}{\pa(x_0,y_0)}=e^{\int_0^t \f{\pa\ov h_f}{\pa y}(y_2,u_f)(\sigma)\, d\sigma}.
\ee
The characteristic $\xi_3=(x_3,y_3)$ passing through $(t,x,y)$ intersects the back face at $(\tau_0,0,\beta_0)$. We have
\be\label{jy3}
\f{\pa(x,y)}{\pa(\tau_0,\beta_0)}=-\ov g_f(u_f(\tau_0))\cdot e^{\int_{\tau_0}^t \f{\pa\ov h_f}{\pa y}(y_3,u_f)(\sigma)\, d\sigma}.
\ee
The characteristic $\xi_4=(x_4,y_4)$ passing through $(\tau_0,1,\beta_0)$ intersects the bottom face at $(0,x_0,y_0)$. We have
\be\label{jy4}
\f{\pa(\tau_0,\beta_0)}{\pa(x_0,y_0)}=\f{-1}{\ov g_f(u_f(\tau_0))}\cdot e^{\int_0^{\tau_0} \f{\pa\ov h_f}{\pa y}(y_4,u_f)(\sigma)\, d\sigma}.
\ee
The characteristic $\xi_5=(x_5,y_5)$ passing through $(t,x,y)$ intersects the bottom face at $(0,x_0,y_0)$. We have
\be\label{jy5}
\f{\pa(x,y)}{\pa(x_0,y_0)}=e^{\int_0^t \f{\pa\widetilde h_f}{\pa y}(y_5,u_f)(\sigma)\, d\sigma}.
\ee
The characteristic $\xi_6=(x_6,y_6)$ passing through $(t,x,y)$ intersects the left face at $(t_0,\alpha_0,0)$. We have
\be\label{jy6}
\f{\pa(x,y)}{\pa(t_0,\alpha_0)}=\widetilde h_f(u_f(t_0),0)\cdot e^{\int_{t_0}^t \f{\pa\widetilde h_f}{\pa y}(y_6,u_f)(\sigma)\, d\sigma}.
\ee
The characteristic $\xi_7=(x_7,y_7)$ passing through $(t_0,\alpha_0,1)$ intersects the bottom face at $(0,x_0,y_0)$. We have
\be\label{jy7}
\f{\pa(\alpha_0, t_0)}{\pa(x_0,y_0)}=\f{-1}{\ov h_f(u_f(t_0),1)}\cdot e^{\int_0^{t_0} \f{\pa\ov h_f}{\pa y}(y_7,u_f)(\sigma)\, d\sigma}.
\ee
The characteristic $\xi_8=(x_8,y_8)$ passing though $(t_0,\alpha_0,1)$ intersects the back face at $(\tau_0,0,\beta_0)$. We have
\be\label{jy8}
\f{\pa(\alpha_0, t_0)}{\pa(\tau_0,\beta_0)}=\f{-\ov g_f(u_f(\tau_0))}{\ov h_f(u_f(t_0),1)}\cdot e^{\int_{\tau_0}^{t_0} \f{\pa\ov h_f}{\pa y}(y_8,u_f)(\sigma)\, d\sigma}.
\ee
The characteristic $\xi_9=(x_9,y_9)$ passing through $(t,x,y)$ intersects the back face at $(\tau_0,0,\beta_0)$. We have
\be\label{jy9}
\f{\pa(x,y)}{\pa(\tau_0,\beta_0)}=-\widetilde g_f e^{\int_{\tau_0}^t\f{\pa\widetilde h_f}{\pa y}(y_9,u_f)(\sigma)\, d\sigma}.
\ee
The characteristic $\xi_{10}=(x_{10},y_{10})$ passing through $(\tau_0,1,\beta_0)$ intersects the bottom face at $(0,x_0,y_0)$. We have
\be\label{jy10}
\f{\pa(\tau_0,\beta_0)}{\pa(x_0,y_0)}=\f{-1}{\widetilde g_f}\cdot e^{\int_0^{\tau_0} \f{\pa\widetilde h_f}{\pa y}(y_{10},u_f)(\sigma)\, d\sigma}.
\ee
\end{lem}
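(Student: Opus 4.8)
The plan is to prove all ten identities at once by exploiting a single structural feature shared by every characteristic in the statement: the age equation always has the form $\dot x = a(s)$ with $a(s)$ one of $\ov g_f(u_f(s))$, $\widehat g_f$, $\widetilde g_f$, hence \emph{independent of the state} $(x,y)$, while the maturity equation is $\dot y = h(y,s)$ with $h$ one of $\ov h_f(\cdot,u_f)(s)$, $\widetilde h_f(\cdot,u_f)(s)$. Consequently, if a characteristic is anchored at time $s_\ast$ by $x(s_\ast)=c$, then $x(s)=c+\int_{s_\ast}^{s}a(\sigma)\,d\sigma$ for every $s$, so that $x(s)$ is an explicit affine function of $c$ and does not depend on the maturity datum at all. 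This is exactly what makes every Jacobian matrix below triangular after a suitable ordering of the variables, so that its determinant is a product of two diagonal entries.

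The second ingredient is the variational equation in the maturity variable. If $y(\cdot\,;p)$ denotes the maturity component as a function of a parameter $p$ on which the characteristic depends, then $v(s):=\partial y(s)/\partial p$ solves the scalar linear ODE $\dot v=\frac{\partial h}{\partial y}(y(s),s)\,v$, so that $v(s)=v(s_\ast)\exp\!\Big(\int_{s_\ast}^{s}\frac{\partial h}{\partial y}(y(\sigma),\sigma)\,d\sigma\Big)$ for any base time $s_\ast$. With $h=\ov h_f$ or $\widetilde h_f$ this accounts for all the exponential factors on the right-hand sides. The starting value $v(s_\ast)$ is dictated by the anchoring: if the characteristic starts on the bottom face with $y(0)=y_0$, then $v(0)=1$ when $p=y_0$ and $v(0)=0$ when $p=x_0$; if instead it is anchored by a fixed maturity at a \emph{variable} time $s_\ast(p)$ — the case behind $\xi_6$, $\xi_7$, $\xi_8$ and their analogues — differentiating the identity $y(s_\ast(p);p)=\mathrm{const}$ gives $v(s_\ast)=-\dot y(s_\ast)\,s_\ast'(p)=-h(y(s_\ast),s_\ast)\,s_\ast'(p)$, which is the source of the extra boundary factors $\ov h_f(u_f(t_0),1)$ and $\widetilde h_f(u_f(t_0),0)$ in \eqref{jy6}, \eqref{jy7}, \eqref{jy8}.

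With these two facts the treatment of each case is mechanical: (i) identify the anchor of the characteristic — a point of the bottom face, or a transversal crossing of one of the faces $x=0$, $x=1$, $y=0$, $y=1$ — noting that the relevant crossing time is well defined, unique and $C^1$ in the parameters; here the sign conditions of Theorem~\ref{thm-sol} enter, since $\ov g_f,\widehat g_f,\widetilde g_f>0$ give transversality to $x=0,1$ and $\ov h_f>0$, $\widetilde h_f(0,u_f)>0$, $\widetilde h_f(1,u_f)<0$ give transversality to $y=0,1$; (ii) write the $2\times2$ Jacobian using the affine formula for $x$ (together with implicit differentiation of the defining relation for the crossing time, e.g. $x(\tau_0)=0$, $x(\tau_0)=1$, $y(t_0)=1$ or $y(t_0)=0$), the variational formula for $\partial y/\partial p$, and the chain rule through the crossing time; (iii) expand the triangular determinant. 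For example, \eqref{jy2} comes from $\bigl(\begin{smallmatrix}1 & 0\\ \ast & e^{\int_0^{t}\frac{\partial\ov h_f}{\partial y}\,d\sigma}\end{smallmatrix}\bigr)$, \eqref{jy3} from $\bigl(\begin{smallmatrix}-\ov g_f(u_f(\tau_0)) & 0\\ \ast & e^{\int_{\tau_0}^{t}\frac{\partial\ov h_f}{\partial y}\,d\sigma}\end{smallmatrix}\bigr)$, and \eqref{jy6} from $\bigl(\begin{smallmatrix}-\widetilde g_f & 1\\ -\widetilde h_f(u_f(t_0),0)\,e^{\int_{t_0}^{t}\frac{\partial\widetilde h_f}{\partial y}\,d\sigma} & 0\end{smallmatrix}\bigr)$; the identities \eqref{jy4}, \eqref{jy7}, \eqref{jy8}, \eqref{jy10} are obtained the same way, now reading $(\tau_0,\beta_0)$ (resp.\ $(\alpha_0,t_0)$) as the output and $(x_0,y_0)$ (resp.\ $(\tau_0,\beta_0)$) as the variable, and \eqref{jy5}, \eqref{jy9} are the Phase-$3$ analogues of \eqref{jy2} and \eqref{jy3}.

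I expect the only genuine difficulty to be bookkeeping rather than mathematics: for each of the ten characteristics one must correctly decide which of the two output coordinates is free and which is slaved by a constraint, and, in the cases with a moving anchor time, keep careful track of the correction $v(s_\ast)=-h(y(s_\ast),s_\ast)\,s_\ast'(p)$ and of the signs (for instance the orientation of maps such as $(x,y)\mapsto(\tau_0,\beta_0)$). Once the correct triangular matrix has been written down in each case, the value of the determinant is immediate from the two one-line facts above.
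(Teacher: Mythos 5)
Your proposal is correct, and it is essentially the argument the paper has in mind: the paper gives no proof at all (it declares the lemma trivial and points to the one--dimensional characteristics theory in \cite{LiYuBook}), and what you describe --- the state--independence of the age velocity making each Jacobian reduce to a product of two factors, the variational equation $\dot v=\frac{\partial h}{\partial y}(y(s),s)v$ supplying the exponential, and the correction $v(s_\ast)=-h(y(s_\ast),s_\ast)s_\ast'(p)$ at a moving anchor time --- is exactly that classical argument, so you are in effect supplying the omitted proof. One bookkeeping caveat: for \eqref{jy8} the $2\times2$ matrix is not literally triangular (all four entries are nonzero); it becomes triangular only after subtracting $\ov g_f(u_f(t_0))$ times the $t_0$--row from the $\alpha_0$--row, and when you carry this out honestly the determinant comes out as $+\ov g_f(u_f(\tau_0))e^{\int_{\tau_0}^{t_0}\frac{\partial\ov h_f}{\partial y}(y_8,u_f)(\sigma)d\sigma}/\ov h_f(u_f(t_0),1)$, i.e.\ with the sign opposite to the one printed in \eqref{jy8} (consistent with it being the composition of the two negative Jacobians of type \eqref{jy3} and \eqref{jy7}); this is harmless for the paper since only the absolute value enters the changes of variables, but you should not expect your computation to reproduce the printed sign there.
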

The proof of this lemma is trivial, and can be found in \cite{LiYuBook} of 1-space dimension case, we omit here.

The expressions of contraction mapping coefficients $C_1^f$ and $C_2^f$ are as following
\begin{align}\label{Cf}
C_1^f:=&\f{a_1\gamma^2_m(2K^2_1-tK^2_1+3K_1+tK^2_1K_2+9K_1K_2)}{K_2(1-tK_1)}\sum_{k=1}^{N}\|\ov\phi^f_{k0}\|\\
&+\!\f{2(a_2-a_1)\gamma^2_m(K^2_1\!+\!2tK^2_1K_2\!+\!4K_1K_2)}{K_2(1-tK_1)}\!\sum_{k=2}^{N}\!\|\widehat\phi^f_{k0}\|
\!\!+\!\f{a_2\gamma^2_m(2K_1\!+\!2tK^2_1)}{1-tK_1}\!\sum_{k=1}^{N}\!\|\widetilde\phi^f_{k0}\|,\nonumber\\
C_2^f:=&\f{a_1\gamma^2_m(2K^2_1+2K_1+12K_1K_2-2tK^2_1K_2)}{K_2(1-tK_1)}\sum_{k=1}^{N}\|\ov\phi^f_{k0}\|\nonumber\\
&+\f{2(a_2-a_1)\gamma^2_m(K^2_1+6K_1K_2)}{K_2(1-tK_1)}\sum_{k=2}^{N}\|\widehat\phi^f_{k0}\|
+\f{4a_2\gamma^2_mK_1}{1-tK_1}\|\widetilde\phi^f_{k0}\|\nonumber.
\end{align}

\section*{Acknowledgements}
The author would like to thank the professors Fr\'{e}d\'{e}rique
Cl\'{e}ment, Jean-Michel Coron and Zhiqiang Wang for their interesting comments and
many valuable suggestions on this work.

\end{document}